\documentclass[11pt,letterpaper]{amsart}
\usepackage{latexsym,amscd,amssymb, graphicx, color, amsthm, bm, amsmath, cancel, enumitem, ytableau}  
\usepackage{tikz}
\usepackage[margin=1in]{geometry}

\usepackage[all,cmtip]{xy}
\usepackage{mathtools}
\usepackage{rotating}

\usepackage[latin1]{inputenc}
\usepackage{amsfonts}

\usepackage{amsthm}
\usepackage{graphicx}
\usepackage{fullpage}

\usepackage{import}

\usepackage[colorlinks,linkcolor=blue,hyperindex,citecolor=orange]{hyperref}
\usepackage[backend=biber, style=numeric, maxbibnames=15]{biblatex}
\addbibresource{refs.bib}
\renewbibmacro{in:}{}



\usepackage{tikz-cd}

\usepackage{amssymb}
\usepackage{mathrsfs}
\usepackage{mathtools}
\usepackage{xcolor}
\usepackage{stmaryrd}
\usepackage{caption}
\usepackage{verbatim}
\usepackage{tcolorbox}
\usepackage{tikz}
\usetikzlibrary{arrows}
\usepackage{array}
\usepackage{float}
\usepackage{bbm}
\usetikzlibrary{shapes.multipart,positioning,decorations.pathreplacing,calc,intersections,folding}
\usepackage{rotating} 
\usepackage{graphicx} 

\usetikzlibrary{shapes.multipart,positioning,decorations.pathreplacing,calc,intersections,folding,backgrounds}
\tikzstyle{mydashed}=[dash pattern=on 1.5pt off 1.5pt]
\usetikzlibrary{lindenmayersystems}
\pgfdeclarelindenmayersystem{cayley}{
  \rule{F -> F [ R [F] [+F] [-F] ]}
  \symbol{R}{
    \pgflsystemstep=0.5\pgflsystemstep
  }
}
 \usepackage{bm}

\def\multiset#1#2{\ensuremath{\left(\kern-.3em\left(\genfrac{}{}{0pt}{}{#1}{#2}\right)\kern-.3em\right)}}

\newcommand{\SP}{\mathrm{P}}
\newcommand{\KSP}{\Bbbk\mathrm{P}}


\newcommand{\KE}{\Bbbk\mathrm{E}}





\newcommand{\MV}{\mathrm{M}}
\newcommand{\CV}{\mathrm{C}}
\newcommand{\SB}{\mathrm{H}}

\newcommand{\SH}{\mathrm{H}}
\newcommand{\VH}{\mathrm{H}}

\newcommand{\KSH}{\mathrm{H}}

\newcommand{\po}{\mathrm{PO}}



\newcommand{\co}{\mathrm{\Sigma}}
\newcommand{\kco}{\Bbbk\mathrm{\Sigma}}
\newcommand{\kcod}{\Bbbk\mathrm{\Sigma}^*}

\newcommand{\lc}{\overline{\mathrm{C}}}
\newcommand{\lcg}{\overline{\mathrm{cG}}}

\newcommand{\klcgd}{\Bbbk\overline{\mathrm{cG}}^*}





\newcommand{\Ex}{\mathrm{Ex}}
\newcommand{\ex}{\mathrm{ex}}

\newcommand{\BM}{\mathtt{M}}
\newcommand{\BP}{\mathtt{P}}
\newcommand{\BF}{\mathtt{F}}

\newcommand{\s}{\mathrm{s}}

\newcommand{\Min}{\mathrm{Min}}
\newcommand{\Max}{\mathrm{Max}}

\newcommand{\interior}{\mathrm{int}}
\newcommand{\closure}{\mathrm{cl}}
\newcommand{\starr}{\mathrm{star}}


\newcommand{\Des}{\mathrm{Des}}

\newcommand{\type}{\mathrm{type}}
\newcommand{\rank}{\mathrm{rk}}


\newcommand{\qsym}{QSym}
\newcommand{\hcg}{\mathrm{H_{cg}}}

\newcommand{\cc}{\mathfrak{c}}

\newcommand{\ha}{\mathcal{A}}
\newcommand{\Descent}{\mathrm{Des}}

\newcommand{\Peak}{\mathrm{Peak}}

\newcommand{\supp}{\mathrm{supp}}

\newtheorem{theorem}{Theorem}[section]
\newtheorem{proposition}[theorem]{Proposition}
\newtheorem{lemma}[theorem]{Lemma}
\newtheorem{corollary}[theorem]{Corollary}

\newtheorem{question}[theorem]{Question}
\newtheorem{example}[theorem]{Example}
\newtheorem{definition}[theorem]{Definition}

\newtheorem{remark}[theorem]{Remark}

\newtheorem*{theorem_polynomial_invariants_enumeration}{Theorem~\ref{poly_cg_eta}, \ref{poly_cg_varphi'}}
\newtheorem*{corollary_reciprocity}{Corollary~\ref{reciprocity_eta_zeta}, \ref{reciprocity_cg_varphi'}}
\newtheorem*{theorempreview1}{Theorem~\ref{supersolvable_equivalent_condition}}
\newtheorem*{theorempreview2}{Theorem~\ref{ab_character_supersolvable}}
\newtheorem*{theorempreview3}{Theorem~\ref{cd_varphi'_peaks}, \ref{cd_varphi_peaks}}

\title{Convex Geometries via Hopf Monoids: Combinatorial Invariants, Reciprocity, and Supersolvability}
\author{Yichen Ma}
\address{Department of Mathematics \\ Cornell University \\ Ithaca, NY, 14853 \\ USA}
\email{ym476@cornell.edu}

\begin{document}

\maketitle{}
\begin{abstract}
We study the Hopf monoid of convex geometries, which contains partial orders as a Hopf submonoid, and investigate the combinatorial invariants arising from canonical characters. Each invariant consists of a pair: a polynomial and a more general quasisymmetric function. We give combinatorial descriptions of the polynomial invariants and prove combinatorial reciprocity theorems for the Edelman-Jamison and Billera-Hsiao-Provan polynomials, which generalize the order and enriched order polynomials, respectively, within a unified framework. For the quasisymmetric invariants, we show that their coefficients enumerate faces of certain simplicial complexes, including subcomplexes of the Coxeter complex and a simplicial sphere structure introduced by Billera, Hsiao, and Provan. We also examine the associated $ab$- and $cd$-indices. We establish an equivalent condition for convex geometries to be supersolvable and use this result to give a geometric interpretation of the $ab$- and $cd$-index coefficients for this class of convex geometries.
\end{abstract}

\section{Introduction}\label{sec:Introduction}

Joyal \cite{MR0633783} introduced \emph{species} as a unified framework for studying collections of combinatorial objects, which was further developed by Bergeron, Labelle, and Leroux \cite{MR1629341}. The study of Hopf algebras and related combinatorial structures dates back to the work of Joni and Rota \cite{joni1979coalgebras}, with the underlying ideas of merging and breaking combinatorial objects. The theory of \emph{Hopf monoids} in species, developed by Aguiar and Mahajan \cite{MR2724388}, takes a categorical approach. Unlike a Hopf algebra, which is generated by unlabeled objects, a Hopf monoid is generated by labeled objects, thereby encoding more information. This theory has been fruitfully applied to study various structures such as partial orders, graphs, simplicial complexes, and generalized permutahedra \cite{MR4651496, ABV, ArdilaSanchez2022, Castillo2024Hopf}. 

A central tool in this theory is the notion of \emph{characters}, which are multiplicative functions on Hopf algebras and admit natural Hopf monoid analogues. It was studied by Aguiar, Bergeron, and Sottile \cite{MR2196760}, and later explored to Hopf monoids by Aguiar and Bastidas \cite{ABV}.  Each character yields a pair of combinatorial invariants: a polynomial and a more general quasisymmetric function via the universal map as studied in \cite{MR2196760}. 


One particularly rich family of combinatorial structures that can be studied via Hopf monoids is that of \emph{convex geometries}, introduced by Edelman \cite{Edelman} as a generalization of order ideals in partially ordered sets. Edelman and Jamison \cite{EJ} characterized convex geometries via closure operators and provided examples including those arising from Euclidean closure in $\mathbb{R}^n$, partial orders, oriented matroids, graphs, and hypergraphs. Extending the work of \cite{ABV}, we generalize the study of characters from partial orders to convex geometries. In particular, we show that the \emph{polynomial invariants} associated with the canonical characters, $\eta$, $\zeta$, $\varphi$, and $\varphi'$, correspond to the counting functions which generalize the order polynomial \cite{MR0269545}, the strict order polynomial, and the enriched order polynomial \cite{Stembridge}. For $\eta$, $\zeta$, and $\varphi'$, we obtain the polynomials first introduced by Edelman-Jamison \cite{EJ} and Billera-Hsiao-Provan \cite{BHP}.

\begin{theorem_polynomial_invariants_enumeration}
Let $\chi^\psi$ denote the polynomial invariant associated with the character $\psi$. Let $g$ be a convex geometry on ground set $I$.
    \begin{itemize}
        \item[(1)]$\chi^\eta_I(g)(n)$ counts the number of extremal functions $f: I\rightarrow [n]$. 
        \item[(2)]$\chi^\zeta_I(g)(n)$ counts the number of strictly extremal functions $f: I \rightarrow [n]$.
        \item[(3)] $\chi^{\varphi'}_I(g)(n)$ counts the number of enriched extremal functions $f: I \rightarrow \llbracket n\rrbracket$.
            \end{itemize}

In particular, $\llbracket n \rrbracket = \{\overline{1}<1<...<\overline{n}<n\}.$
\end{theorem_polynomial_invariants_enumeration}

With this understanding, we show a collection of combinatorial reciprocity results from the unified  perspective of characters and their associated polynomial invariants. These include the Edelman-Jamison reciprocity \cite[Theorem 4.7]{EJ} and the Billera-Hsiao-Provan reciprocity \cite[pp.16]{BHP}.

\begin{corollary_reciprocity}
Let $g$ be a convex geometry on ground set $I$.
    \begin{itemize}
        \item[(1)]  $(-1)^{|I|}\chi^\eta_I(g)(-n) = \chi^\zeta_I(g)(n)$.
        \item[(2)] $\chi^{\varphi'}_I(g)(-n) = (-1)^{|{I}|}\chi^{\varphi'}_I(g)(n).$
        \item[(3)] $\chi^{\varphi}_I(g)(-n) = (-1)^{|I|}\chi^{\varphi}_I(g)(n).$
    \end{itemize}
\end{corollary_reciprocity}

The notion of \emph{quasisymmetric invariants} (flag $f$-vectors) of Hopf algebras arises from the coefficients of the quasisymmetric function canonically associated with a character $\psi$, as discussed in \cite{MR2196760}. This concept has a natural analogue in Hopf monoids. For convex geometries, we give a geometric interpretation of the flag $f$-vectors associated with canonical characters, describing them as enumerations over certain subcomplexes of the Coxeter complex (using the hyperplane arrangement language as in \cite{MS}) and over specific faces in the simplicial sphere structure introduced by Billera, Hsiao, and Provan \cite{BHP}.

In some contexts, convex geometries are also referred to as \emph{antimatroids}, in which the feasible sets are precisely the complements of convex sets. Antimatroids form an important subclass of \emph{greedoids}. Armstrong \cite{Armstrong2007TheSO} developed the theory of \emph{supersolvable antimatroids} in analogy with supersolvable lattices, originally introduced by Stanley \cite{Stanley1972SupersolvableL}. Supersolvable antimatroids naturally appear in the study of closure operators \cite{MR1225869}, Coxeter groups \cite{Armstrong2007TheSO}, and matroids \cite{MR4245154}. Recently, Backman and Danner \cite{backman2024convex} proved that the building sets on a finite meet-semilattice form a supersolvable convex geometry, and applied this result to unify and extend a collection of results in algebraic geometry, matroid theory, and related areas. In Section \ref{supersolvable_convex_section}, we establish a geometric characterization of supersolvable convex geometries on finite ground sets.

\begin{theorempreview1}
Let $g$ be a convex geometry. Let $p_1$,..., $p_k$ be partial orders such that $V_{p_1}$,..., $V_{p_k}$ are the maximal convex cones in $V_g$, the order complex associated to $g$.
Then the following statements are equivalent.\begin{enumerate}
\item[(1)] $\bigcap_{i=1}^k V_{p_i}$ contains at least one chamber. That is, there exists a partial order $p_0$ on $I$ with $V_{p_0} = \bigcap_{i=1}^k V_{p_i}$. 
    \item[(2)] $g$ is supersolvable.
\end{enumerate}
\end{theorempreview1}

Using this geometric description, we express the $ab$-indices (from the flag $h$-vector) of the canonical characters $\eta$ and $\zeta$ on supersolvable convex geometries in terms of \emph{descents}. We use $\Psi^\psi_g$ denote the $ab$-index associated to a given convex geometry $g$ and a character $\psi$. For two linear orders $\ell_1$ and $\ell_2$ on $I$, let $\left( \begin{array}{c}
     \ell_1  \\
     \ell_2
\end{array} \right )$ denote the two-line permutation determined by $\ell_1$ and $\ell_2$.

\begin{theorempreview2}
Let $g$ be a supersolvable convex geometry. For $S\subseteq [n-1]$, let $m(a,b)_S$ denote the degree $n-1$ $ab$-monomial with $b$ on position $s$ for each $s\in S$. Fix any linear order $l_0$, $l_0'$ satisfying $\overline{l_0} \in V_{{p_0}}$, $l'_0 \in V_{p_0}$. 
\begin{enumerate}
\item $[m(a,b)_S]\Psi^{\eta}_g = \# \left \{\ell \in V_g \mid \Descent (\left ( \begin{array}{c}
     \ell'_0  \\
     \ell 
\end{array} \right ) ) = S\right \}.$
    \item $[m(a,b)_S]\Psi^{\zeta}_g = \#\left \{\ell \in V_g \mid \Descent (\left ( \begin{array}{c}
     \ell_0  \\
     \ell 
\end{array} \right ) ) = S\right \}.$

\end{enumerate}
\end{theorempreview2}

The notion of the $cd$-index originated in the combinatorial study of convex polytopes and was introduced by Bayer and Klapper \cite{Bayer1991ANI}. Bayer and Billera \cite{BayerBillera1985} generalized the Dehn-Sommerville relations to arbitrary polytopes and Eulerian posets, and these generalized relations were shown in \cite{Bayer1991ANI} to be equivalent for a graded poset to admit a $cd$-index with integer coefficients. More recently, Aguiar, Bergeron, and Sottile \cite{MR2196760} showed that for a specific class of characters, namely \emph{odd} characters, the associated flag $f$-vectors satisfy the Bayer-Billera relations. As a result, the notion of $cd$-indices for odd characters in Hopf algebras is well-defined, along with an analogue for odd characters in Hopf monoids.

Applying our geometric characterization of supersolvable convex geometries, we compute the $cd$-indices of the canonical odd characters $\varphi'$ and $\varphi$ on this class and provide geometric descriptions using the concept of \emph{peak}. We use $\Phi^\psi_g$ denote the $cd$-index associated to a given convex geometry $g$ and a character $\psi$. Note the $cd$-monomial $c$ has degree 1 and $d$ has degree 2.

\begin{theorempreview3}
Let $g$ be a supersolvable convex geometry. Let $m(c,d)_S$ denote the degree $n-1$ $cd$-monomial such that the degrees of the initial segments ending in $d$'s are precisely the elements in $S \subseteq [n-1]$. Fix any linear orders $l_0$ and $l_0'$ satisfying $\overline{l_0} \in V_{p_0}$ and $l_0' \in V_{p_0}$.
\begin{enumerate}

    \item 
$[m(c,d)_S]\Phi^{\varphi'}_g = 2^{|S|+1} \cdot \#\left \{\ell \in V_g \mid \Peak(\left ( \begin{array}{c}
     \ell_0  \\
     \ell 
\end{array} \right )) = S \right \}.$
\item 
$[m(c,d)_S]\Phi^{\varphi}_{g} = 2^{|S|+1} \cdot \# \left\{\ell \in V_g \mid \Peak(\left ( \begin{array}{c}
     \ell'_0  \\
     \ell 
\end{array} \right )) = S\right \}.$
\end{enumerate}

\end{theorempreview3}

In this manuscript, we work with a finite ground set $I$ and a field $\Bbbk$ with $\mathrm{char}(\Bbbk) \neq 2$. The manuscript is organized as follows. In Section \ref{backgrounds} and \ref{background_geo}, we provide a brief introduction to the main topics. In Section \ref{convex_geometries}, we compute four canonical characters of the Hopf monoid of convex geometries, derive the reciprocity results of the associated polynomial invariants, and describe the corresponding quasisymmetric invariants. In Section \ref{ab_cd_section}, we present equivalent conditions for a convex geometry to be supersolvable and compute the $ab$ and $cd$-indices associated with the canonical characters on supersolvable convex geometries. In Section \ref{supersolvable_closure_operators}, we extend some of the preceding discussions to supersolvable closure operators.

\section{Preliminaries on Hopf monoids}\label{backgrounds}

\subsection{Hopf monoids}
We fix a field $\Bbbk$. A \emph{combinatorial Hopf algebra} is a pair $(\VH,\psi)$, where $\VH = \bigoplus_{n \geq 0}\VH _n$ is a graded connected Hopf algebra over $\Bbbk$, and $\psi: \VH \rightarrow \Bbbk$ a \emph{character}. That is, $\psi$ is $\Bbbk$-linear and multiplicative. For a detailed introduction of Hopf algebra, we refer to \cite{grinberg2020hopf, Martin2012LectureNO}.

Unlike Hopf algebras, which discuss unlabeled objects, investigations on \emph{Hopf monoids} work on labeled objects. For a comprehensive reference of Hopf monoids, we refer to \cite{MR4651496, MR2724388}. In this section, we give a brief introduction. 

A \emph{set species} is a functor $\mathrm{set^\times} \rightarrow \mathrm{Set}$, where $\mathrm{set^\times}$ is the category of finite sets and bijections, and $\mathrm{Set}$ is the category of all sets and maps. A \emph{vector species} is a functor $\mathrm{set^\times} \longrightarrow \mathrm{Vec}$, where $\mathrm{Vec}$ is the category of vector spaces and linear maps. We may obtain a \emph{linearization} of a set species $\SP$ by defining $\KSP[I]$ to be the vector space spanned by $\SP[I]$, and $\KSP[\sigma]$ to be be the unique linear map extending $\SP[\sigma]$ for each bijection $\sigma: I\rightarrow J$.

A \emph{monoid vector species} is a vector species $\MV$ endowed with morphisms of species $\mu: \MV \cdot \MV \rightarrow \MV$ and $\iota: \Bbbk1 \rightarrow \MV$ satisfying the following axioms.
\begin{enumerate}
    \item $(\emph{Naturality})$ For finite sets $I$, $J$, each decomposition $I = S\sqcup T$, and each bijection $\sigma: I\rightarrow J$, the following diagram commutes.$$\begin{tikzcd}
{\MV[S]\otimes \MV[T]} \arrow[d, "{\MV[\sigma|_S]\otimes \MV[\sigma|_T]}"'] \arrow[r, "{\mu_{S,T}}"] & {\MV[I]} \arrow[d, "{\MV[\sigma]}"] \\
{\MV[\sigma(S)]\otimes \MV[\sigma(T)]} \arrow[r, "{\mu_{\sigma(S),\sigma(T)}}"']                       & {\MV[J]}                            
\end{tikzcd} $$
    
    \item $(\emph{Associativity})$ For each decomposition $I = R\sqcup S\sqcup T$, the following diagram commutes. $$\begin{tikzcd}
 \MV[R] \otimes \MV[S] \otimes  \MV[T] \arrow[r, "\mathrm{id} \otimes \mu_{S,T}"] \arrow[d, "\mu_{R,S} \otimes id"'] & \MV[R] \otimes \MV[S\sqcup T] \arrow[d, "\mu_{R, S\sqcup T}"] \\
 \MV[R\sqcup S] \otimes \MV[T] \arrow[r, "\mu_{R\sqcup S,T}"]                                                 & \MV[I]                          
 \end{tikzcd} $$ 
    \item $(\emph{Unitality})$ For each finite set $I$, the following diagrams commute. $$
\begin{tikzcd}
\MV[I] & \MV[\emptyset] \otimes \MV[I] \arrow[l, "\mu"']                                      &  & \MV[I] \otimes \MV[\emptyset] \arrow[r, "\mu"]                                       & \MV[I] \\
     & \Bbbk \otimes \MV[I] \arrow[u, "\iota \otimes \mathrm{id}_I"'] \arrow[lu, "\cong"] &  & \MV[I] \otimes \Bbbk \arrow[u, "\mathrm{id}_T \otimes \iota"] \arrow[ru, "\cong"'] &     
\end{tikzcd}$$
    
\end{enumerate}

A \emph{comonoid vector species} is a vector species $\CV$ endowed with morphisms of species $\Delta: \CV \rightarrow \CV\cdot \CV$, $\eta: \CV\rightarrow \Bbbk1$ satisfying naturality, coassociativity, and counitality. That is, $\Delta$ and $\eta$ make the duals of the above diagrams commute.

A species $\SH$ is a \emph{bimonoid vector species} if it is both a monoid vector species and a comonoid vector species and satisfies the additional \emph{compatibility} axiom. That is, for each finite set $I$, decompositions $I= S_1\sqcup S_2 = T_1 \sqcup T_2$, consider the pairwise intersection as follows, $$A:= S_1\cap T_1, \text{ }B:= S_1\cap T_2, \text{ }C:=S_2\cap T_1, \text{ }D:= S_2\cap T_2. $$ Let $\beta$ denote the braiding map for monoid species. Then the compatibility axiom can be illustrated by the following commutative diagrams.
$$\begin{tikzcd}
{\VH[A]\otimes \VH[B]\otimes \VH[C]\otimes \VH[D]} \arrow[rr, "\mathrm{id}\otimes \beta\otimes\mathrm{id}"] &                                               & {\VH[A]\otimes\VH[C]\otimes\VH[B]\otimes\VH[D]} \arrow[d, "{\mu_{A,C}\otimes\mu_{B,D}}"] \\
{\VH[S_1]\otimes \VH[S_2]} \arrow[r, "{\mu_{S_1,S_2}}"'] \arrow[u, "{\Delta_{A,B}\otimes\Delta_{C,D}}"] & {\VH[I]} \arrow[r, "{\Delta_{T_1,T_2}}"'] & {\VH[T_1]\otimes \VH[T_2]}          \end{tikzcd} $$ $$\begin{tikzcd}
{\VH[\emptyset]\otimes\VH[\emptyset]} \arrow[r, "\eta_\emptyset\otimes\eta_\emptyset"] \arrow[d, "{\mu_{\emptyset,\emptyset}}"'] & \Bbbk\otimes\Bbbk \arrow[d, no head, Rightarrow, no head] & \Bbbk \arrow[d, no head, Rightarrow, no head] \arrow[r, "\iota_{\emptyset}"]           & {\VH[\emptyset]} \arrow[d, "{\Delta_{\emptyset,\emptyset}}"] \\
{\VH[\emptyset]} \arrow[r, "\eta_{\emptyset}"]                                                                                            & \Bbbk                                             & \Bbbk\otimes\Bbbk \arrow[r, "\iota_{\emptyset}\otimes \iota_{\emptyset}"] & {\VH[\emptyset]\otimes \VH[\emptyset]}                          
\end{tikzcd} $$ $$\begin{tikzcd}
                                                                                 & {\VH[\emptyset]} \arrow[rd, "\eta_{\emptyset}"] &         \\
\Bbbk \arrow[ru, "\iota_\emptyset"] \arrow[rr,  no head, Rightarrow, no head] &                                                      & \Bbbk
\end{tikzcd} $$

A \emph{morphism} between two (co)monoid species is a morphism of species satisfying unitality and (co)multiplicity. Let $\VH$ be a bimonoid vector species. Let $\mathrm{End}(\VH)$ denote the set of all morphisms of vector species $f: \VH\rightarrow \VH$. The \emph{convolution product} $f*g$ of
two morphisms $f$ and $g$ of vector species is defined by
$$(f*g)_I (x) =\sum_{S\sqcup T = I}f_S(x|_S)\cdot g_T(x/_S)$$ for all finite sets $I$ and all $x\in \SB[I]$. we say that $\VH$ is a \emph{Hopf monoid species} if the identity map $\mathrm{id}$: $\VH \rightarrow \VH$ is invertible in $\mathrm{End}(\VH)$ with respect to the convolution product. The invertible map is called the \emph{antipode} of $\VH$ and denoted as $\s$.

\subsection{Characters, polynomial invariants and reciprocity}
For detailed discussions on the definitions of characters, polynomial invariants, and dual bases on the Hopf monoid of partial orders and labeled partial orders, we refer to \cite{ABV}. Here, we review the basic concepts.

Let $\MV$ be a monoid species. A \emph{character} on $\MV$ is a morphism of monoid species $\psi: \MV\rightarrow \KE$, where $\KE$ is the species sending every finite set to $\Bbbk$. Specifically, $\psi$ consists of a collection of linear maps $$\psi_I: \MV[I]\rightarrow \Bbbk,$$ one for each finite set $I$, subject to naturality, multiplicativity and unitality. One important fact is that given a connected Hopf monoid, the set of characters forms a group under the convolution product \cite[Definition 1.1, Hopf algebra analogue]{MR2196760}. The inverse of a character $\psi$ is determined by $\psi \circ \s$.

The \emph{polynomial invariant} associated with $\psi$ is as the following summation. $$\chi_I(x)(n) = \sum_{I = S_1\sqcup ... \sqcup S_n}(\zeta_{S_1} \otimes...\otimes  \zeta_{S_n}) \circ \Delta_{S_1,...,S_n}(x)$$ for $x\in \KSH[I]$ and $n\in\mathbb{N}$. The sum runs over all weak compositions $(S_1,...,S_n)$ of $I$. \cite[Proposition 3.1]{ABV} showed that $\chi$ is indeed a polynomial.

\begin{proposition}\label{general_reciprocity}\textnormal{\cite[Proposition 3.7]{ABV}}
Let $\VH$ be a Hopf monoid, $\psi$ a character on $\VH$ and $\chi^\psi$ its associated polynomial invariant. Then
\begin{equation}
    \chi^\psi_I(x)(-1) = \psi_I(\s_I(x)).
\end{equation}
More generally, for every $n \in \mathbb{N}$, we have \begin{equation}
    \chi^\psi_I(x)(-n) = \chi^\psi_I(\s_I(x))(n).
\end{equation} 
\end{proposition}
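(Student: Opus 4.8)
The plan is to realize the polynomial invariant as a convolution power of $\psi$ and then to exploit the group structure of characters together with the fact that the antipode reverses convolution. For $k\in\mathbb Z$ I will write $\chi^\psi(k)$ for the collection of linear maps $x\mapsto\chi^\psi_I(x)(k)$, one for each $I$. The first observation is that, directly from the definition of $\chi^\psi$, one has $\chi^\psi(n)=\psi^{*n}$ for every $n\in\mathbb N$: the sum over weak compositions $I=S_1\sqcup\cdots\sqcup S_n$ of $(\psi_{S_1}\otimes\cdots\otimes\psi_{S_n})\circ\Delta_{S_1,\ldots,S_n}(x)$ is exactly the $n$-fold convolution product $\psi*\cdots*\psi$ evaluated at $x$, and $\chi^\psi(0)$ is the counit of $\VH$, i.e.\ the identity of the character group. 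Recall also from the discussion above that $\psi^{-1}=\psi\circ\s$ in that group.

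Next I would establish the addition formula
\begin{equation*}
\chi^\psi_I(x)(m+n)\;=\;\sum_{S\sqcup T=I}\chi^\psi_S(x|_S)(m)\cdot\chi^\psi_T(x/_S)(n),
\end{equation*}
that is, $\chi^\psi(m+n)=\chi^\psi(m)*\chi^\psi(n)$, valid for all integers $m,n$. For $m,n\in\mathbb N$ this is immediate: split each $(m+n)$-part weak composition of $I$ into its first $m$ blocks (a weak composition of some $S$) and its last $n$ blocks (a weak composition of $T=I\setminus S$), and invoke coassociativity of $\Delta$. Since, for fixed $x$, both sides are polynomial functions of $m$ and of $n$ by \cite[Proposition 3.1]{ABV} and they agree on $\mathbb N^2$, they agree on $\mathbb Z^2$. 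The same polynomial-extension principle, applied to the multiplicativity of $\chi^\psi(k)$ (which holds for $k\in\mathbb N$ since $\chi^\psi(n)=\psi^{*n}$ is a character), shows that $\chi^\psi(k)$ is a character for \emph{every} $k\in\mathbb Z$.

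Now I specialize. Taking $(m,n)=(1,-1)$ in the addition formula gives $\psi*\chi^\psi(-1)=\chi^\psi(0)$, and since both $\psi$ and $\chi^\psi(-1)$ are characters, uniqueness of inverses in the character group forces $\chi^\psi(-1)=\psi^{-1}=\psi\circ\s$; reading off components, $\chi^\psi_I(x)(-1)=\psi_I(\s_I(x))$, which is (1). Iterating the addition formula at $(-(k-1),-1)$ yields $\chi^\psi(-n)=\chi^\psi(-1)^{*n}=(\psi\circ\s)^{*n}$ for all $n\in\mathbb N$. On the other hand $\chi^\psi_I(\s_I(x))(n)=(\psi^{*n})_I(\s_I(x))=(\psi^{*n}\circ\s)_I(x)$, so (2) reduces to the identity $(\psi\circ\s)^{*n}=\psi^{*n}\circ\s$. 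This follows by induction from the standard fact that the antipode reverses convolution, $(f*g)\circ\s=(g\circ\s)*(f\circ\s)$ (a consequence of $\s$ being an anti-morphism of comonoids): writing $\psi^{*(n+1)}=\psi^{*n}*\psi$ gives $\psi^{*(n+1)}\circ\s=(\psi\circ\s)*(\psi^{*n}\circ\s)=(\psi\circ\s)*(\psi\circ\s)^{*n}=(\psi\circ\s)^{*(n+1)}$, the reversal of order being harmless since all factors coincide. Combining, $\chi^\psi_I(x)(-n)=(\psi\circ\s)^{*n}_I(x)=(\psi^{*n}\circ\s)_I(x)=\chi^\psi_I(\s_I(x))(n)$.

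The part I expect to require the most care is not any single computation but the bookkeeping in the polynomial-extension steps — in particular, justifying that $\chi^\psi(-1)$ is genuinely a character before invoking uniqueness of inverses, and keeping the block-order and braiding conventions consistent both in the $\mathbb N$-level verification of the addition formula and in the anti-comultiplicativity of $\s$ in the species setting. A more structural alternative would be to package $n\mapsto\chi^\psi(n)$ as a morphism of Hopf monoids from $\VH$ into a polynomial Hopf monoid whose antipode negates the variable, so that reciprocity becomes the statement that morphisms of Hopf monoids commute with antipodes; this is conceptually cleaner but requires setting up that target object first, so I would default to the hands-on argument above.
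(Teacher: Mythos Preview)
Your argument is correct, but note that the paper does not actually prove this proposition: it is quoted verbatim from \cite[Proposition~3.7]{ABV}, so there is no in-paper proof to compare against. The strategy you outline---identify $\chi^\psi(n)=\psi^{*n}$ for $n\in\mathbb N$, extend the addition law $\chi^\psi(m+n)=\chi^\psi(m)*\chi^\psi(n)$ to all integers by polynomiality, and then read off $\chi^\psi(-1)=\psi^{-1}=\psi\circ\s$---is exactly the standard one and is essentially how the result is proved in \cite{ABV}.

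One small simplification: in your final step you do not need the anti-comultiplicativity of $\s$ at all. Since $\psi^{*n}$ is itself a character, its inverse in the character group is $\psi^{*n}\circ\s$; on the other hand, in any group $(g^n)^{-1}=(g^{-1})^n$, so $(\psi^{*n})^{-1}=(\psi^{-1})^{*n}=(\psi\circ\s)^{*n}$. This gives $(\psi\circ\s)^{*n}=\psi^{*n}\circ\s$ immediately, without the inductive unwinding. Your remark about the ``structural alternative''---packaging $n\mapsto\chi^\psi(n)$ as a Hopf monoid map into a polynomial Hopf monoid and invoking the fact that Hopf morphisms commute with antipodes---is also a valid and clean route, and indeed closer in spirit to how such reciprocity statements are often phrased.
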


The above proposition is a \emph{reciprocity} result of a very general nature. This explains why an explicit antipode formula is important: such information allows us to compute the values of all polynomial invariants at negative integers. While the polynomial invariant depends on the specific character, the antipode only depends on the Hopf monoid structure. The antipode acts as a universal link between the values of the invariants at positive and negative integers.

While we have explicit antipode formulas, such as the Hopf monoid versions of \emph{Takeuchi's formula} \cite[Proposition 8.13]{MR2724388} and \emph{Milnor-Moore's formula} \cite[Proposition 8.14]{MR2724388}, determining a cancellation-free antipode formula remains an open problem for many Hopf monoids. For instance, a cancellation-free formula for the antipode of convex geometries is still unknown. We will discuss the Hopf monoid of convex geometries in Section \ref{convex_geometries}. Consequently, the general reciprocity may appear uninteresting but involves extensive technical computations. However, if $\psi$ is an \emph{odd character}, meaning $\overline{\psi}_I = (-1)^{|I|}\psi_I$ is the inverse of $\psi$ in the character group, we have \emph{self-reciprocity} for the associated polynomial invariant.

\begin{proposition}\label{self_reciprocity}\textnormal{\cite[Proposition 3.13]{ABV}}
Let $\chi^\psi$ be the polynomial invariant associated with an odd character $\psi$, then $$ \chi^\psi_I(x)(-n) = (-1)^{|I|}\chi^\psi_I(x)(n)$$ for all $x\in\KSH[I]$. 
\end{proposition}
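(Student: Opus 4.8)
The plan is to reduce everything to Proposition~\ref{general_reciprocity} together with the defining property of an odd character. First I would unwind the definition of the polynomial invariant and observe that $\chi^\psi_I(x)(n) = \psi^{*n}_I(x)$ for every $n \in \mathbb{N}$, where $\psi^{*n} = \psi * \cdots * \psi$ is the $n$-fold convolution power of $\psi$: the sum over weak compositions $(S_1,\dots,S_n)$ of $I$ appearing in $\chi^\psi_I(x)(n)$ is exactly the component at $I$ of the iterated convolution product, namely $(\psi_{S_1}\otimes\cdots\otimes\psi_{S_n})\circ\Delta_{S_1,\dots,S_n}$ summed up (with $\Bbbk^{\otimes n}$ identified with $\Bbbk$ via multiplication). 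Applying Proposition~\ref{general_reciprocity} with argument $-n$ then gives $\chi^\psi_I(x)(-n) = \chi^\psi_I(\s_I(x))(n) = \psi^{*n}_I(\s_I(x))$, so it suffices to prove that $\psi^{*n}_I \circ \s_I = (-1)^{|I|}\,\psi^{*n}_I$ as linear maps on $\KSH[I]$.

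The key step is to commute the antipode past the iterated coproduct. Since the antipode of a connected Hopf monoid is an anti-endomorphism of coalgebras, the iterated coproduct satisfies $\Delta_{S_1,\dots,S_n}\circ\s_I = (\s_{S_n}\otimes\cdots\otimes\s_{S_1})\circ\tau\circ\Delta_{S_n,\dots,S_1}$, where $\tau$ reverses the order of the $n$ tensor factors. Composing with $\psi^{\otimes n}$ and with the multiplication $\Bbbk^{\otimes n}\to\Bbbk$, which is commutative so that the reversal $\tau$ becomes invisible, one obtains $\psi^{*n}\circ\s = (\psi\circ\s)^{*n}$. By the excerpt, $\psi\circ\s$ is the inverse of $\psi$ in the character group, and $\psi$ being odd means precisely that this inverse equals $\overline{\psi}$, where $\overline{\psi}_J = (-1)^{|J|}\psi_J$. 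Hence $\chi^\psi_I(x)(-n) = \overline{\psi}^{\,*n}_I(x)$.

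It remains to extract the sign. Expanding $\overline{\psi}^{\,*n}_I(x) = \sum_{I = S_1 \sqcup \cdots \sqcup S_n}(\overline{\psi}_{S_1}\otimes\cdots\otimes\overline{\psi}_{S_n})\circ\Delta_{S_1,\dots,S_n}(x)$ and replacing each $\overline{\psi}_{S_i}$ by $(-1)^{|S_i|}\psi_{S_i}$, every summand acquires the factor $\prod_{i=1}^n (-1)^{|S_i|} = (-1)^{|S_1| + \cdots + |S_n|} = (-1)^{|I|}$, the same constant for every weak composition of $I$; pulling it out of the sum yields $\overline{\psi}^{\,*n}_I(x) = (-1)^{|I|}\psi^{*n}_I(x) = (-1)^{|I|}\chi^\psi_I(x)(n)$, which is the claim. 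The only genuinely delicate point is the identity $\psi^{*n}\circ\s = (\psi\circ\s)^{*n}$, which relies on the standard fact that the antipode is an anti-morphism for both the product and the coproduct, with commutativity of $\Bbbk$ washing out the reversal of tensor factors; everything else is sign bookkeeping. As an alternative that avoids Proposition~\ref{general_reciprocity} entirely, one may note that $n \mapsto \chi^\psi_I(-)(n)$ extends to a homomorphism $(\mathbb{Z},+) \to \text{character group}$, since $\chi^\psi(-)(m+n) = \chi^\psi(-)(m)*\chi^\psi(-)(n)$ holds for $m,n \ge 0$ by coassociativity and then persists as a polynomial identity, so that $\chi^\psi(-)(-n) = \chi^\psi(-)(n)^{-1} = (\psi^{*n})^{-1} = (\psi^{-1})^{*n} = \overline{\psi}^{\,*n}$, after which the same sign computation finishes the proof.
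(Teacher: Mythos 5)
Your argument is correct and is essentially the proof that the paper defers to in \cite[Proposition~3.13]{ABV}: combine Proposition~\ref{general_reciprocity} with the facts that $\chi^\psi_I(x)(n)=\psi^{*n}_I(x)$, that precomposition with the antipode inverts a character (so $\psi^{*n}\circ\s=(\psi\circ\s)^{*n}=\overline{\psi}^{\,*n}$ by oddness), and that the sign $(-1)^{|S_1|+\cdots+|S_n|}=(-1)^{|I|}$ factors out of every term. The only blemish is a harmless index-ordering slip in your anti-coalgebra identity for the iterated coproduct (the antipodes should be $\s_{S_1}\otimes\cdots\otimes\s_{S_n}$ after the reversal $\tau$), which, as you note, is washed out by the commutativity of $\Bbbk$.
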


This means that, when $\psi$ is odd, the value of the polynomial invariant $\chi^\psi$ at negative integers depends only on the size of the ground set $I$.

\subsection{Quasisymmetric Invariants}

Let $\co$ be the set species of compositions. That is, for each finite $I$. $\co[I]$ is the set of compositions $F\vDash I$. Let $\kco$ be its linearization. We denote $\BP$ as the basis corresponding to each set composition in $\kco$. For example, if $F\in \co[I]$, then $\BP_F$ is the basis element corresponding to $F$ in $\kco[I]$. Let $\kcod$ be the dual species of $\kco$. That is, $\kcod[I]$ is the dual space of $\kco[I]$. Let $\BM$ be the dual basis of $\BP$.

in \cite[section 12.4]{MR2724388}, Aguiar and Mahajan discussed the Hopf monoid structures of $\kco$ and $\kcod$. Specifically, $\kcod$ is a Hopf monoid vector species with product and coproduct on the basis $\{\mathtt{M}\}$ as follows.

For $\mathtt{M}_F, \mathtt{M}_G$, $\mathtt{M}_F\cdot \mathtt{M}_G$ is the sum of all quasi-shuffles.

For $I = S\sqcup T$, $$\triangle_{S,T}(\mathtt{M}_F) = \begin{cases} \mathtt{M}_{F_1}\otimes \mathtt{M}_{F_2} & \text{if $F = F_1 \cdot F_2$, $S$ consists of minimal blocks in $F$,} \\ 0 & \text{if $S$ is not an initial segment in $F$.} \end{cases}$$

Let $\zeta: \kcod \rightarrow \KE$ be as follows. For each finite set $I$, $$\zeta_I(\mathtt{M}_F) = \begin{cases} 1 & \text{if $F = (I) $ or $I = \emptyset $}, \\ 0 & \text{otherwise.}  \end{cases} $$

\begin{theorem}\textnormal{\cite[Theorem 11.19]{MR2724388}}\label{universal_Hopf_monoid}
For any Hopf monoid vector species $\VH$ with a character $\psi: \VH \rightarrow \KE$, there exists a unique morphism of Hopf monoids $f: \VH \rightarrow \kcod$ such that the following diagram commutes.

$$\begin{tikzcd}
\VH \arrow[rr, "f^\psi", dashed] \arrow[rd, "\psi"'] &     & \kcod \arrow[ld, "\zeta"] \\
                                                 & \KE &                          
\end{tikzcd}$$

In the above diagram, $f^\psi$ is defined as follows. For $x\in \VH[I]$, \begin{equation}\label{quasi_invariant}
  f^\psi_I(x) = \sum_{F \vDash I}\psi_F \Delta_F(x) \mathtt{M}_F.  
\end{equation}
\end{theorem}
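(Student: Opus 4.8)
The plan is to verify the theorem in two stages: first show that the map $f^\psi$ defined by \eqref{quasi_invariant} is indeed a morphism of Hopf monoids making the triangle commute, and then show it is the unique such morphism. For the first stage, I would check the defining properties of $f^\psi$ one at a time. Commutativity of the triangle is immediate: applying $\zeta$ to $f^\psi_I(x) = \sum_{F\vDash I}\psi_F\Delta_F(x)\,\mathtt{M}_F$ kills every term except $F=(I)$ (and the empty composition when $I=\emptyset$), leaving $\psi_I\Delta_{(I)}(x)=\psi_I(x)$, since $\Delta_{(I)}=\mathrm{id}$. Naturality of $f^\psi$ follows from naturality of $\psi$ and of the (co)multiplication of $\VH$, together with the fact that a bijection $\sigma\colon I\to J$ sends the composition $F\vDash I$ to $\sigma(F)\vDash J$ and $\mathtt{M}_F$ to $\mathtt{M}_{\sigma(F)}$.

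The substantive checks are that $f^\psi$ respects the product and the coproduct. For the coproduct: fix $I=S\sqcup T$ and compute $(f^\psi_S\otimes f^\psi_T)\circ\Delta_{S,T}(x)$. Using coassociativity of $\Delta$ in $\VH$, a composition $F_1\vDash S$ refining the first block and $F_2\vDash T$ refining the second assemble into the concatenation $F_1\cdot F_2\vDash I$, and the coefficient $\psi_{F_1}\Delta_{F_1}\otimes\psi_{F_2}\Delta_{F_2}$ collapses to $\psi_{F_1\cdot F_2}\Delta_{F_1\cdot F_2}$ because $\psi$ is multiplicative. Matching this against $\triangle_{S,T}\big(f^\psi_I(x)\big)=\sum_{F\vDash I}\psi_F\Delta_F(x)\,\triangle_{S,T}(\mathtt{M}_F)$ and using the stated formula for $\triangle_{S,T}(\mathtt{M}_F)$ — which is nonzero exactly when $S$ is an initial segment of $F$, i.e. $F=F_1\cdot F_2$ with $F_1\vDash S$ — gives the desired equality term by term. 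For the product: given $x\in\VH[S]$, $y\in\VH[T]$, expand $f^\psi_I(\mu_{S,T}(x\otimes y))=\sum_{F\vDash I}\psi_F\Delta_F(\mu_{S,T}(x\otimes y))\,\mathtt{M}_F$. Here I would invoke the bimonoid compatibility axiom repeatedly to rewrite $\Delta_F\circ\mu_{S,T}$ as a sum, over ways the blocks of $F$ distribute their intersections with $S$ and with $T$, of terms of the form $\mu(\Delta_{F\cap S}(x)\otimes\Delta_{F\cap T}(y))$ with an interleaving; multiplicativity of $\psi$ then turns $\psi_F$ into $\psi_{F|_S}\cdot\psi_{F|_T}$, and the combinatorics of "which block of $F$ is hit by $S$, by $T$, or by both" is precisely the quasi-shuffle description of $\mathtt{M}_{F|_S}\cdot\mathtt{M}_{F|_T}$. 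Hence $f^\psi_I\circ\mu_{S,T}=\mu\circ(f^\psi_S\otimes f^\psi_T)$.

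For uniqueness, suppose $f\colon\VH\to\kcod$ is any morphism of Hopf monoids with $\zeta\circ f=\psi$. Write $f_I(x)=\sum_{F\vDash I}c_F(x)\,\mathtt{M}_F$. Applying the iterated coproduct of $\kcod$ along $F=(S_1,\dots,S_k)$ and then $\zeta^{\otimes k}$, and using that $f$ commutes with $\Delta$ and that $\zeta^{\otimes k}\circ\Delta_F^{\kcod}$ extracts exactly the $\mathtt{M}_F$-coefficient (by the coproduct formula: iterating $\triangle$ along $F$ sends $\mathtt{M}_G$ to $\mathtt{M}_{G_1}\otimes\cdots\otimes\mathtt{M}_{G_k}$ only when $G=F$, each factor $(S_i)$, on which $\zeta$ returns $1$), we get $c_F(x)=\big(\zeta^{\otimes k}\circ\Delta_F^{\kcod}\circ f_I\big)(x)=\big(\psi_{S_1}\otimes\cdots\otimes\psi_{S_k}\big)\circ\Delta_F^{\VH}(x)=\psi_F\Delta_F(x)$, so $f=f^\psi$.

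I expect the main obstacle to be the product computation: unwinding $\Delta_F\circ\mu_{S,T}$ via the compatibility axiom and recognizing the resulting index set as the set of quasi-shuffles of $F|_S$ and $F|_T$ is the one place where careful bookkeeping is genuinely needed, whereas the coproduct, counit, naturality, and uniqueness arguments are essentially forced. (Of course, since this is quoted as \cite[Theorem 11.19]{MR2724388}, in the paper itself one may simply cite it; the above is how one would prove it from scratch.)
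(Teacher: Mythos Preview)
Your proof is correct, and indeed you anticipated the right point: the paper does not prove this statement at all but simply quotes it as \cite[Theorem~11.19]{MR2724388}. There is therefore no ``paper's own proof'' to compare against; your sketch is a faithful reconstruction of the standard argument one finds in Aguiar--Mahajan, and the student's closing parenthetical is exactly right.

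One minor clarification on the uniqueness step, since you phrased it slightly loosely: iterating $\Delta$ along $F=(S_1,\dots,S_k)$ does \emph{not} send $\mathtt{M}_G$ to zero unless $G=F$; rather, it is nonzero precisely when $G$ refines $F$, in which case the result is $\mathtt{M}_{G|_{S_1}}\otimes\cdots\otimes\mathtt{M}_{G|_{S_k}}$. It is only after applying $\zeta^{\otimes k}$ that all terms with $G\neq F$ die (since then some $G|_{S_i}$ has more than one block). Your conclusion is correct, but the intermediate claim as written overstates what $\Delta_F^{\kcod}$ alone accomplishes.
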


Fix $F = (F_1,...,F_k)\vDash I$, let $f_F^\psi(x) = \psi_F\Delta_F(x)$. Let $\alpha = (\alpha_1,...,\alpha_k)$ be an \emph{integer composition} of $|I|$. That is, $\alpha_1+...+\alpha_k = |I|$ and $\alpha_i \neq 0$. The associated \emph{quasisymmetric invariant}, or \emph{flag $f$-vector} with $\psi$ is the vector $(f^\psi_\alpha(x))_\alpha$ indexed by $\alpha \vDash |I|$ with
$$f_\alpha^\psi (x) = \sum_{F:\, \type(F)=\alpha} f_F^\psi (x). $$

The Hopf algebra analogy of Theorem \ref{universal_Hopf_monoid} is as follows.  Consider the combinatorial Hopf algebra $(\qsym,\eta_Q)$.  Let $\BM_\alpha$ denote the \emph{monomial quasi-symmetric function} indexed by $\alpha = (\alpha_1,...,\alpha_k)$, which forms a linear basis of $\qsym$. Specifically, $$
    \BM_\alpha := \sum_{i_1<i_2<...<i_k}x^{\alpha_1}_{i_1} x^{\alpha_2}_{i_2}...x^{\alpha_k}_{i_k}.$$
The character $\eta_Q: \qsym \rightarrow \Bbbk$ is defined as follows.
$$\eta_Q(\BM_\alpha) = \begin{cases}
    1 & \text{if }\alpha = (n) \text{ or }(), \\
    0 & \text{otherwise.}
\end{cases}$$

\begin{theorem}\textnormal{\cite[Theorem 4.1]{MR2196760}}\footnote{The authors denoted $\Upsilon$ as $\Psi$, and $\eta_Q$ as $\zeta_Q$.} For any combinatorial Hopf algebra $(\VH,\psi)$,  there exists a unique morphism of combinatorial Hopf algebras as follows.
$$
    \Upsilon:(\VH,\psi) \rightarrow (\qsym, \eta_Q).
$$
Specifically, for $x\in \VH$ \begin{equation}
    \Upsilon(x) = \sum_{\alpha \vDash n}\eta_\alpha(x)\BM_\alpha
\end{equation} where $\eta_\alpha$ is defines by the composition $$\VH \xrightarrow{\triangle^{(k-1)}}\VH^{\otimes k} \twoheadrightarrow \VH_{\alpha_1}\otimes ... \otimes \VH_{\alpha_k} \rightarrow \Bbbk.$$

where the map $\twoheadrightarrow$ is the tensor product of the canonical projections onto the
homogeneous components $\VH_{\alpha_i}$.
\end{theorem}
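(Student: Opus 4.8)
The plan is to verify directly that $\Upsilon$, defined by $\Upsilon(x)=\sum_{\alpha\vDash n}\eta_\alpha(x)\BM_\alpha$ for $x\in\VH_n$, is a morphism of combinatorial Hopf algebras, and then to deduce uniqueness by specializing to $(\VH,\psi)=(\qsym,\eta_Q)$. The routine first observations are that $\Upsilon$ is well-defined and graded: for $x\in\VH_n$ the projection onto $\VH_{\alpha_1}\otimes\cdots\otimes\VH_{\alpha_k}$ annihilates every term of $\Delta^{(k-1)}(x)$ unless $|\alpha|=n$, so $\Upsilon(\VH_n)\subseteq\qsym_n$; counitality and unitality follow from connectedness; and the character condition is a one-line check, since for $x\in\VH_n$ with $n\ge1$ only the part $\alpha=(n)$ survives $\eta_Q$, giving $\eta_Q(\Upsilon(x))=\eta_{(n)}(x)=\psi(x)$, while on $\VH_0=\Bbbk 1$ both sides equal $\psi(1)=1$.

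For coalgebra compatibility I would compare coefficients in the $\BM$-basis. The deconcatenation formula $\Delta_{\qsym}\BM_\gamma=\sum_{\gamma=\alpha\cdot\beta}\BM_\alpha\otimes\BM_\beta$ makes the $\BM_\alpha\otimes\BM_\beta$-coefficient of $\Delta_{\qsym}\Upsilon(x)$ equal to $\eta_{\alpha\cdot\beta}(x)$, while the corresponding coefficient of $(\Upsilon\otimes\Upsilon)\Delta_\VH(x)$ is the scalar $(\eta_\alpha\otimes\eta_\beta)(\Delta_\VH x)$; these agree because coassociativity of $\Delta_\VH$ gives $\eta_{\alpha\cdot\beta}=(\eta_\alpha\otimes\eta_\beta)\circ\Delta_\VH$ after regrouping the iterated coproduct into its first (length of $\alpha$) and last (length of $\beta$) legs. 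The main work, and the step I expect to be the principal obstacle, is multiplicativity $\Upsilon(xy)=\Upsilon(x)\Upsilon(y)$. Since $\Delta^{(k-1)}_\VH$ is an algebra map and $\psi$ is multiplicative and unital, expanding $\Delta^{(k-1)}(xy)=\Delta^{(k-1)}(x)\cdot\Delta^{(k-1)}(y)$ componentwise shows that, for $x\in\VH_p$ and $y\in\VH_q$, the coefficient $\eta_\gamma(xy)$ equals $\sum\prod_i\psi(x^{(i)})\psi(y^{(i)})$ summed over pairs of weak compositions $(a_i)$ of $p$ and $(b_i)$ of $q$ with $a_i+b_i=\gamma_i$; because $\gamma_i\ge1$ one never has $a_i=b_i=0$, and any zero part contributes a factor $1$ since the degree-zero component is a scalar multiple of the unit. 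Recording only the nonzero parts converts this sum over interleaved pairs of compositions into precisely the quasi-shuffle (overlapping shuffle) expansion defining $\BM_\alpha\cdot\BM_\beta$ in $\qsym$, so $\Upsilon(xy)=\sum_\gamma\eta_\gamma(xy)\BM_\gamma=\Upsilon(x)\Upsilon(y)$. Keeping the combinatorics of overlapping shuffles in lockstep with the degree bookkeeping is the delicate point.

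For uniqueness I would first record the lemma that carrying out the construction for $(\qsym,\eta_Q)$ itself yields $\eta_\alpha(\BM_\beta)=\delta_{\alpha,\beta}$: iterating deconcatenation, projecting each leg onto degree $\alpha_i$, and applying $\eta_Q$ forces every leg to be $\BM_{(\alpha_i)}$, which is possible exactly when $\beta=\alpha$; equivalently $\Upsilon$ is the identity on $(\qsym,\eta_Q)$. Then, given any morphism of combinatorial Hopf algebras $\Psi\colon(\VH,\psi)\to(\qsym,\eta_Q)$, the facts that $\Psi$ is graded, intertwines coproducts, and satisfies $\eta_Q\circ\Psi=\psi$ let me substitute into the definition of $\eta_\alpha$ to obtain $\eta^{\VH,\psi}_\alpha=\eta^{\qsym,\eta_Q}_\alpha\circ\Psi$, whence $\eta^{\VH,\psi}_\alpha(x)=[\BM_\alpha]\Psi(x)$ by the lemma, and therefore $\Psi(x)=\sum_\alpha\eta_\alpha(x)\BM_\alpha=\Upsilon(x)$, completing the plan.
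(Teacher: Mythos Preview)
The paper does not contain a proof of this statement: it is quoted as \cite[Theorem~4.1]{MR2196760} and used as a black box, with no argument given in the paper itself. So there is nothing in the paper to compare your proof against.

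That said, your proposal is correct and is essentially the standard proof (the one in the cited reference). The verification that $\Upsilon$ is graded, counital, and character-preserving is routine as you say; the coalgebra compatibility via deconcatenation and coassociativity is the right mechanism; and your handling of multiplicativity---reducing $\eta_\gamma(xy)$ to a sum over pairs of weak compositions with $a_i+b_i=\gamma_i$, then stripping zero parts via counitality to land on compositions $\alpha\vDash p$, $\beta\vDash q$, and identifying the resulting combinatorics with the quasi-shuffle structure constants for $\BM_\alpha\BM_\beta$---is exactly the argument one needs. The uniqueness step, computing $\eta_\alpha^{\qsym,\eta_Q}(\BM_\beta)=\delta_{\alpha,\beta}$ and pulling back through an arbitrary morphism $\Psi$, is also the standard route. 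There is no gap.
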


Hence, $f^\psi_\alpha(x)$ corresponds to the coefficient of $\BM_\alpha$ in $\Upsilon(x)$. We will elaborate further on the Hopf algebra arguments when we discuss the $cd$-indices of canonical odd characters in Section~\ref{cd_index}.

\subsection{Flag $h$-vectors and $ab$, $cd$-indices of Hopf monoids}

For a detailed discussion of the $cd$-index on Eulerian posets and polytopes, we refer to \cite{Bayer2019TheA}. Below, we provide an introduction to the $ab$ and $cd$-indices in the context of Hopf monoids.

Analogous to the definition of the flag $h$-vector for a polytope, the \emph{flag $h$-vector} for $\psi$ is determined by the following relation. $$h^\psi_\alpha(x) = \sum_{\beta \leq \alpha}(-1)^{l(\beta)-l(\alpha)}f^\psi_\alpha(x),$$ where $\beta \leq \alpha$ means $\beta$ can be obtained by merging blocks in $\alpha$.

For $\alpha = (\alpha_1,...,\alpha_{k}) \vDash n$, let $m(a,b)_\alpha$ denote the $ab$-monomial of degree $n-1$ with $b$'s on position $\alpha_1, \alpha_1+\alpha_2, ...,\alpha_1+...+\alpha_{k-1}.$ For example, if $\alpha = (2,1,1,5,3)\vDash 12$ then the associated $ab$-monomial is$m(a,b)_\alpha = abbbaaaabaa$.

Denote $\Psi^\psi_{x}(a,b)$ the \emph{$ab$-index} associated with $\psi$ on $h\in H[I]$. That is, $$\Psi^\psi_{x}(a,b) = \sum_{\alpha \vDash |I|}h^\psi_{\alpha}m(a,b)_{\alpha}.$$ Let $c= a+b$, $d= ab+ba$. The \emph{cd-index} \cite[Definition 2.6]{Bayer2019TheA}, if exists, is the polynomial $\Phi(c,d)$ such that $\Phi(c,d) = \Psi(a,b)$. 

\begin{theorem}[Section 5, \cite{MR2196760}] If the character $\psi$ is odd, then for any $x\in\VH[I]$ the vector $(f_\alpha^\psi (x))_\alpha$ satisfies the \emph{Bayer-Billera relations} \textnormal{\cite[Theorem 2.1]{BayerBillera1985}} (or \emph{generalized Dehn-Sommerville equations}).
\end{theorem}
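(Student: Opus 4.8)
The plan is to reduce the statement to the known Hopf-algebra result of Aguiar, Bergeron, and Sottile by transporting the flag $f$-vector of the character $\psi$ on the Hopf monoid $\VH$ to a flag $f$-vector of an associated character on a combinatorial Hopf algebra, and to check that oddness is preserved under this transport. Concretely, I would first recall the \emph{bosonic Fock functor} $\overline{\mathcal{K}}$ (or $\mathcal{\bar K}$) which sends a connected Hopf monoid in species to a graded connected Hopf algebra; under this functor a character $\psi$ on $\VH$ in the Hopf-monoid sense pushes forward to a character $\bar\psi$ on the Hopf algebra $\overline{\mathcal{K}}(\VH)$, and the flag $f$-vector $f^\psi_\alpha(x)$ defined via iterated coproducts in species coincides — after applying $\overline{\mathcal{K}}$ — with the Hopf-algebra flag $f$-vector $\eta_\alpha$ of Theorem~4.1 of \cite{MR2196760} evaluated on the image of $x$. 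This is essentially the content of the commuting-diagram discussion around Equation~\eqref{quasi_invariant}: the universal map $f^\psi:\VH\to\kcod$ intertwines with $\Upsilon:\overline{\mathcal{K}}(\VH)\to\qsym$ under $\overline{\mathcal{K}}$, since $\overline{\mathcal{K}}(\kcod)=\qsym$ and $\overline{\mathcal{K}}$ carries $\zeta$ to $\eta_Q$.

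Second, I would verify that the notion of \emph{odd character} is compatible with this functor: if $\psi$ satisfies $\psi\circ\s = \bar\psi$ where $\bar\psi_I = (-1)^{|I|}\psi_I$ in the character group of $\VH$, then the pushed-forward character on $\overline{\mathcal{K}}(\VH)$ satisfies the analogous relation $\bar\psi\circ S = \bar{\bar\psi}$ with $\bar{\bar\psi}$ the graded sign twist on the Hopf algebra. This follows because $\overline{\mathcal{K}}$ is a monoidal functor taking the convolution product of morphisms of species to the convolution product of graded maps, and it takes the species antipode $\s$ to the Hopf-algebra antipode $S$; hence it is a group homomorphism on characters and commutes with the sign involution, which only depends on the grading (= cardinality of the ground set). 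Therefore $\bar\psi$ is an odd character on the combinatorial Hopf algebra $(\overline{\mathcal{K}}(\VH),\bar\psi)$ in exactly the sense required by Section~5 of \cite{MR2196760}.

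Third, with $x\in\VH[I]$ fixed, its image $\bar x\in\overline{\mathcal{K}}(\VH)$ lies in a single graded component (degree $|I|$), and the Bayer-Billera relations for the flag $f$-vector $(\eta_\alpha(\bar x))_{\alpha\vDash|I|}$ follow from the Hopf-algebra theorem applied to the odd character $\bar\psi$; unwinding the identification $f^\psi_\alpha(x)=\eta_\alpha(\bar x)$ then gives the Bayer-Billera relations for $(f^\psi_\alpha(x))_\alpha$, which is the claim. An alternative, more self-contained route would be to bypass the Fock functor and argue directly: reprove the Hopf-algebra argument of \cite[Section 5]{MR2196760} at the level of the Hopf monoid, using that the odd-character condition forces the cancellation $f^\psi_{\ldots,r,s,\ldots}(x)$ to satisfy the alternating sums indexed by refinements of $(r{+}s)$ into two parts — this is the same inclusion-exclusion computation, now phrased with the species coproduct $\Delta_{S,T}$ and the relation $\psi\circ\s=\bar\psi$ in place of the Hopf-algebra antipode identity.

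The main obstacle I anticipate is purely bookkeeping rather than conceptual: making the identification $f^\psi_\alpha(x)=\eta_\alpha(\overline{\mathcal{K}}(\VH)\text{-image of }x)$ precise requires care about how $\overline{\mathcal{K}}$ acts on iterated coproducts and on the projection-to-homogeneous-components maps, and one must be sure the sign conventions in the definition of ``odd'' match on both sides. If one prefers the direct route, the subtlety is that the Bayer-Billera relations are relations among the $f_\alpha$ for a \emph{fixed} degree $|I|$, so one has to check the species-level antipode identity $\psi\circ\s=\bar\psi$ interacts correctly with the restriction/contraction maps appearing in $\Delta_F$ — but since $\s$ is homogeneous (degree-preserving on ground-set size) this is not a genuine difficulty. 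Either way, once the dictionary is set up the theorem is immediate from \cite{MR2196760}.
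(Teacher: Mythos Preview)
The paper does not prove this theorem at all: it is stated as a citation of a known result from Section~5 of \cite{MR2196760}, with no accompanying argument. There is no ``paper's own proof'' to compare against.

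That said, your proposal is a correct and standard way to justify the Hopf-monoid version of the statement. The result in \cite{MR2196760} is proved for combinatorial Hopf algebras, and the passage to Hopf monoids via the bosonic Fock functor $\overline{\mathcal{K}}$ is exactly the right bridge: it sends the species-level iterated coproduct to the Hopf-algebra iterated coproduct, the species character $\psi$ to a Hopf-algebra character $\bar\psi$, and preserves the convolution product and antipode, hence preserves oddness. Your identification $f^\psi_\alpha(x)=\eta_\alpha(\bar x)$ is precisely the content of the compatibility between $f^\psi:\VH\to\kcod$ and $\Upsilon:\overline{\mathcal{K}}(\VH)\to\qsym$ under $\overline{\mathcal{K}}(\kcod)\cong\qsym$, which the paper itself uses implicitly in Section~\ref{cd_index}. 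The bookkeeping concerns you raise are real but routine, and your alternative direct route (redoing the alternating-sum computation at the species level) would also work. In short: your approach is sound, and it supplies a proof where the paper simply defers to the literature.
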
   

\begin{theorem}[Theorem 4, \cite{Bayer1991ANI}]
        The $cd$-index of $x$ exists if and only if the vector $(f^\psi_\alpha(x))_\alpha$ satisfies the Bayer-Billera relations.
\end{theorem}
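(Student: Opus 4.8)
The plan is to recast the statement as an equality of two subspaces of a single finite-dimensional vector space, and then to match their dimensions. Fix $n=|I|$ and let $V_{n-1}$ be the $\Bbbk$-vector space of noncommutative polynomials in the letters $a,b$ that are homogeneous of degree $n-1$; its $ab$-monomials $m(a,b)_\alpha$, indexed by $\alpha\vDash n$ (equivalently by subsets of $[n-1]$), form a basis. Inside $V_{n-1}$ sits the subspace $C_{n-1}$ spanned by the degree-$(n-1)$ $cd$-monomials, where $c=a+b$ and $d=ab+ba$. The first step is to observe that the passages $(f^\psi_\alpha(x))_\alpha\mapsto(h^\psi_\alpha(x))_\alpha$ (by the defining unitriangular relation) and $(h^\psi_\alpha(x))_\alpha\mapsto\Psi^\psi_x(a,b)=\sum_\alpha h^\psi_\alpha(x)\,m(a,b)_\alpha$ are linear isomorphisms among the flag $f$-vector space, the flag $h$-vector space, and $V_{n-1}$. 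Since the Bayer--Billera relations \cite{BayerBillera1985} are linear and homogeneous, their solution set is carried by this composite isomorphism to a linear subspace $D_{n-1}\subseteq V_{n-1}$, characterized by the property that the flag $f$-vector of $x$ satisfies the Bayer--Billera relations if and only if $\Psi^\psi_x(a,b)\in D_{n-1}$. On the other hand, by definition the $cd$-index of $x$ exists exactly when $\Psi^\psi_x(a,b)\in C_{n-1}$. Thus the statement reduces to the purely linear-algebraic identity $C_{n-1}=D_{n-1}$, which no longer mentions $x$ or $\psi$.

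To prove $C_{n-1}=D_{n-1}$ the plan is to establish the inclusion $C_{n-1}\subseteq D_{n-1}$ and then to check that the two spaces have the same dimension. For the dimensions: conditioning a $cd$-word of degree $k$ on whether it ends in $c$ or in $d$ yields the recursion $F_k=F_{k-1}+F_{k-2}$ with $F_0=F_1=1$, so there are $F_{n-1}$ $cd$-monomials of degree $n-1$, and they are linearly independent (for instance, their lexicographically leading $ab$-monomials are pairwise distinct), whence $\dim C_{n-1}=F_{n-1}$; and the rank computation of Bayer and Billera \cite{BayerBillera1985} shows that the solution space of their relations on vectors indexed by the compositions of $n$ has dimension $F_{n-1}$ as well, so $\dim D_{n-1}=F_{n-1}$ after transport. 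It then remains to prove $C_{n-1}\subseteq D_{n-1}$, i.e.\ that every $cd$-monomial, read as a flag $h$-vector and hence (through the inverse of the isomorphism above) as a flag $f$-vector, satisfies the Bayer--Billera relations. The plan here is to rewrite each Bayer--Billera relation as a linear condition on the coefficients of a homogeneous $ab$-polynomial and then verify it on $w=c^{a_0}d\,c^{a_1}d\cdots d\,c^{a_m}$ by induction on the number of $d$'s, using that such a condition, once stated in this form, is compatible with deleting the last letter ($c$ or $d$) of a $cd$-word and passing to the corresponding relation in lower degree. Once $C_{n-1}\subseteq D_{n-1}$ is known, the dimension equality forces $C_{n-1}=D_{n-1}$, and the theorem follows.

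The main obstacle is precisely this inclusion $C_{n-1}\subseteq D_{n-1}$: translating the generalized Dehn--Sommerville equations into conditions on the $ab$-coefficients and then checking them on the $cd$-monomials without the bookkeeping becoming unwieldy. In the Hopf-theoretic framework of this paper there is a cleaner path to the same inclusion: every degree-$(n-1)$ $cd$-monomial is realized as the $ab$-index $\Psi^\psi_x(a,b)$ of an odd character $\psi$ on a suitable (universal) Hopf monoid, and the theorem of \cite{MR2196760} quoted just above then guarantees that the associated flag $f$-vector satisfies the Bayer--Billera relations; since these $ab$-indices span $C_{n-1}$, this gives $C_{n-1}\subseteq D_{n-1}$. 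Either way, the dimension count is routine, and essentially all of the content lies in showing that the $cd$-words are Dehn--Sommerville solutions.
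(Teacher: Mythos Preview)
The paper does not give its own proof of this statement; it is quoted verbatim from \cite{Bayer1991ANI} and used as a black box. There is therefore no ``paper's proof'' to compare against, and your outline is in fact the classical argument from Bayer--Klapper: reduce to the linear-algebraic identity $C_{n-1}=D_{n-1}$, match dimensions via the Fibonacci count (on one side by the $cd$-word recursion, on the other by the Bayer--Billera rank computation), and then establish one inclusion.

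Your direct route to the inclusion $C_{n-1}\subseteq D_{n-1}$ is the standard one and is correct in outline, though you correctly flag that the bookkeeping is where the work lies. Your proposed ``cleaner path'' via odd characters, however, has a genuine gap. You assert that every degree-$(n-1)$ $cd$-monomial is realized as $\Psi^\psi_x(a,b)$ for some odd character $\psi$ on some Hopf monoid and some $x$. But a priori, the theorem of \cite{MR2196760} only tells you that such $\Psi^\psi_x$ lies in $D_{n-1}$; it does not tell you that it lies in $C_{n-1}$, let alone that it equals a prescribed $cd$-monomial. Knowing that $\Psi^\psi_x\in C_{n-1}$ is exactly the content of the theorem you are trying to prove, so you cannot invoke it. To make this route work you would need, for each $cd$-monomial $w$, an explicit $(\psi,x)$ together with an independent computation showing $\Psi^\psi_x=w$; producing such a family that spans $C_{n-1}$ is not obviously easier than the direct verification you were trying to avoid. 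Stick with the direct inclusion argument.
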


Hence, we have the notion of the $ab$-index for every character and the $cd$-index for every odd character. We will elaborate on the descriptions of these indices in Section~\ref{ab_cd_section}.

\section{Preliminaries on hyperplane arrangements}\label{background_geo}

In this section, we provide background on hyperplane arrangements and discuss bijections between geometric and combinatorial descriptions of the braid arrangement on a finite set. We refer the reader to \cite{MS} for an exhaustive discussion of topics related to hyperplane arrangements.

\subsection{Hyperplane Arrangement}

Let $V$ be a vector space over $\mathbb{R}$. A \emph{hyperplane} on $V$ is a codimension-one affine subspace of $V$. A \emph{half-space} is a subset of $V$ which consists of elements on one side of a hyperplane. Hence given a hyperplane $H$, it has two associated half spaces, and their intersection is precisely $H$.  The bounding hyperplane is the \emph{boundary} of the half-space. By convention, a half-space is closed, namely it contains its boundary. The \emph{interior} of the half-space $\mathrm{H}$ is the half-space minus its boundary, and we denote it is $\interior(\mathrm{H})$. 

A \emph{hyperplane arrangement} $\ha$ is a finite set of hyperplanes in a finite-dimensional real vector space $V$. $V$ is called the \emph{ambient space} of $\ha$. The intersection of all hyperplanes in $\ha$ is called the \emph{center}. We say $\ha$ is \emph{essential} if its center is the origin. 

\subsection{Faces, Chambers, Stars}

A \emph{face} $F$ of $\ha$ is a subset of $V$ obtained by intersecting half-spaces of $\ha$, with at least one associated half-space chosen for each hyperplane. Note the center of $\ha$ is a face and we denote it as $O$.

The \emph{interior} of $F$ is the subset of $F$ obtained by intersecting $F$ with the interiors of those half-spaces used to define $F$ whose boundary does not contain $F$. 

Let $\Sigma[\ha]$ denote the set of faces in $\ha$. It is a graded poset under inclusion, with $O$ as its minimum element. Each face $F$ has a dimension, and the rank of $F$ is obtained by $$\rank(F) = \dim(F) - \dim (O).$$ 

The rank of the poset $\Sigma[\ha]$ equals the rank of $\ha$. A maximal face of $\Sigma[\ha]$ is called a \emph{chamber}. We denote the set of chambers in $\ha$ as $\Gamma[\ha]$. A rank-one face is called a \emph{vertex}, and a rank-two face is called an \emph{edge}.

The intersection of two faces is another face, so the meet exists in $\Sigma[\ha]$. We denote the meet of $F$ and $G$ as $F\wedge G$. Note, the join of two faces may not exist, and it exists precisely when $F$ and $G$ have a common upper bound. In this case, we denote the \emph{join} of $F$ and $G$ as $F\vee G$. In summary, $\Sigma[\ha]$ is a graded meet-semilattice. We say $F$ is a face of $G$ if $F \leq G$.

Every face $F$ has an \emph{opposite face}, denoted $\overline{F}$, which is given by $$\overline{F}  = \{-x \mid x\in F\}.$$ The \emph{opposition map} on faces $$\Sigma[\ha] \rightarrow \Sigma[\ha] \hspace{10mm} F\mapsto \overline{F} $$ is an order-preserving involution. That is, we have $$\overline{\overline{F}} = F \hspace{10mm} F\leq G \Rightarrow \overline{F}\leq \overline{G}.$$

Since chambers are faces, the opposition map can be restricted on the set of chambers $\Gamma[\ha]$. That is, every chamber $C$ has an opposite chamber $\overline{C}$.

For a face $F$, the \emph{star} of $F$ is the set of faces of $\ha$ which are greater than $F$. In particular, the star of a chamber is a singleton consisting of the chamber itself, while the star of the central face $O$ is the set of all faces. The \emph{top-star} of $F$ is the set of chambers which are greater than $F$.

Examples of both the star and the top-star will be discussed in Example~\ref{star_example}.

\subsection{Flats}

A \emph{flat} of an hyperplane arrangement $\ha$ is a subspace of the ambient space obtained by intersecting a subset of hyperplanes of $\ha$. Let $\Pi[\ha]$ denote the set of flats. Then it is a graded poset under inclusion, with the center as the minimum element (the intersection of all flats), and the ambient space as the maximum element (the empty intersection of flats). Note, the center is the only subset in $\ha$ which is both a face and a flat. 

The \emph{support} of a face $F$ is the smallest flat which contains $F$, and we denote it as $\supp(F)$. It is the intersection of all flats which contain $F$.

\subsection{Tits Product}

For a hyperplane $H$, we denote its two associated half-spaces by $H^+$ and $H^-$. The choice can be arbitrary but it should be fixed. We let $H^0 = H$ and observe that $H^0 = H^+ \cap H^-.$ In this notation, a face of $\ha$ is a subset of the ambient space of the form $$F = \bigcap_{i\in \mathbb{H}}H_i^{\epsilon_i}$$ where $\mathbb{H}$ is the set of hyperplanes, and $\epsilon_i\in \{+,-,0\}$.

Though different choices of $\epsilon_i$'s may yield the same face,  there is a canonical way to write $F$ in this form, that is,
$$F = \bigcap_{i\in \mathbb{H}}H_i^{\epsilon_i(F)}$$ where $$\epsilon_i(F) = \begin{cases}
    0 & \text{if } F\in H_i, \\
    + & \text{if } \interior(F) \in \interior(H_i^+),\\
    - & \text{if } \interior(F) \in \interior(H_i^-).
\end{cases}$$

For faces $F$ and $G$, define the face $FG$ by 
$$\epsilon_i(FG) := \begin{cases}
    \epsilon_i(F) & \text{if } \epsilon_i(F) \neq 0, \\ 
    \epsilon_i(G) & \text{if }\epsilon_i(F) = 0.
\end{cases}$$

We refer to $FG$ as the \emph{Tits product} of $F$ and $G$. The product has a geometric meaning: if we move from an interior point of $F$ to an interior point of $G$ along a straight line, then $FG$ is the face that we are in after moving a small positive distance. Note, the Tits product is associative, and it admits an identity element, which is the central face. Hence $\Sigma[\ha]$ admits a monoid structure with respect to the Tits product, and we call this the \emph{Tits monoid.}

An example of Tits product will be discussed in Example \ref{Tits_product_example}.

\subsection{Cones and Top-cones}

A \emph{cone} of an arrangement $\ha$ is a subset of the ambient space which can be obtained by intersecting some subset of half-spaces in the arrangement.

Note that if a point lies in $V$, then the face that the point lies in also lies in $V$. Thus every cone is a union of faces. Also observe that if faces $F$ and $G$ lie in $V$, then $FG$ also lies in $V$. By multiplying all the faces lying in $V$ in different orders we obtain the ``largest" faces lying in $V$. A cone of maximum dimension is called a \emph{top-cone}.

\begin{proposition}\cite[Proposition 2.2]{MS}
    For a cone $V$, the following are equivalent.
\begin{enumerate}
    \item $V$ is a top-cone.
    \item $V$ contains at least one chamber.
\end{enumerate}
\end{proposition}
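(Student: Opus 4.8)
The plan is to prove the equivalence $(1) \Leftrightarrow (2)$ directly from the definitions of cones, faces, and chambers, treating the two implications separately.

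For the implication $(2) \Rightarrow (1)$, suppose the cone $V$ contains a chamber $C$. Since chambers are faces of maximal dimension $\dim \Sigma[\ha] + \dim O$, and $V$ is a subset of the ambient space that is a union of faces (as noted just before the statement), $V$ contains a face of maximal dimension. But a cone's dimension is the maximal dimension of a face it contains, so $\dim V$ equals the ambient dimension; hence $V$ is a top-cone. The only subtlety here is justifying that the dimension of a cone equals the ambient dimension exactly when it contains a chamber — this follows because $V$ is a closed convex set (an intersection of closed half-spaces) whose interior is nonempty iff it contains a point not on any of the bounding hyperplanes of $\ha$, i.e. an interior point of some chamber.

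For the harder implication $(1) \Rightarrow (2)$, suppose $V$ is a top-cone, so $\dim V$ equals the ambient dimension and $V$ therefore has nonempty interior as a subset of $V$'s ambient space. I would argue that $\interior(V)$ must meet the interior of some chamber: write $V = \bigcap_{i \in \mathbb{S}} H_i^{\epsilon_i}$ for some subset $\mathbb{S}$ of hyperplanes and signs $\epsilon_i \in \{+,-\}$ (we may discard any hyperplane appearing with both signs or replace $H_i^0$ appropriately, since $V$ is full-dimensional it is not contained in any hyperplane of $\ha$). Pick a point $x$ in the interior of $V$; if $x$ happens to lie on some hyperplane $H_j$ of $\ha$ with $j \notin \mathbb{S}$, perturb it slightly within the open set $\interior(V)$ to move it off all such hyperplanes — this is possible because the union of the finitely many hyperplanes of $\ha$ is nowhere dense. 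The resulting point $x'$ lies in $\interior(V)$ and on no hyperplane of $\ha$, hence lies in the interior of a unique chamber $C$. Since $x' \in V$ and $V$ is a union of faces, the face containing $x'$, namely $C$, is contained in $V$. Alternatively, and more in the spirit of the paper's Tits-product language, one can take the product of all faces lying in $V$ in some order to produce a "largest" face of $V$, observe it is itself contained in $V$, and then check that a largest face of a full-dimensional cone must be a chamber (since if it were lower-dimensional it would lie on some hyperplane $H_i$ with $i \notin \mathbb{S}$, and then the product with a face on the other side of $H_i$ would be a strictly larger face still in $V$, a contradiction).

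I expect the main obstacle to be the careful handling of which hyperplanes "cut" the cone: distinguishing the hyperplanes $\{H_i : i \in \mathbb{S}\}$ used to define $V$ from the remaining hyperplanes of $\ha$, and verifying that a full-dimensional cone is not contained in any hyperplane of the arrangement — so that the perturbation argument (or the maximality-of-the-product argument) goes through. The rest is routine point-set reasoning about closed convex sets with nonempty interior. I would keep the write-up short, likely favoring the Tits-product formulation since it matches the surrounding exposition, and cite the standard fact that a point off all hyperplanes lies in the interior of a chamber.
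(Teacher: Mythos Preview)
The paper does not supply its own proof of this proposition: it is stated with a citation to \cite[Proposition 2.2]{MS} and no argument follows. So there is nothing in the paper to compare against.

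That said, your proof is correct and essentially the standard one. The implication $(2)\Rightarrow(1)$ is immediate from the definition of top-cone as a cone of maximum dimension. For $(1)\Rightarrow(2)$, your perturbation argument is sound: a full-dimensional cone has nonempty relative interior in the ambient space, the finite union of hyperplanes of $\ha$ is nowhere dense, so some interior point of $V$ avoids every hyperplane and therefore lies in the interior of a chamber; then the paper's own remark just before the proposition (``if a point lies in $V$, then the face that the point lies in also lies in $V$'') gives that this chamber is contained in $V$. Your alternative Tits-product formulation is also fine and indeed matches the paper's discussion that multiplying all faces of $V$ in various orders yields the ``largest'' faces of $V$.

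One minor comment: in your sketch of $(2)\Rightarrow(1)$ you mention that ``a cone's dimension is the maximal dimension of a face it contains''; this is true but you do not need it, since a cone is itself a convex set and containing a chamber already forces it to have full affine dimension.
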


\subsection{Braid Arrangement of a Finite Set}

The \emph{braid arrangement} of $[n]$ consists of $\binom{n}{2}$ hyperplanes in $\mathbb{R}^n$ which are defined by $$x_i = x_j$$ for $1\leq i < j < n.$ 

The canonical linear order of the set $[n]$ is not relevant to the definition of the arrangement. Hence we may proceed as follows. Let $I$ be a finite set. The \emph{braid arrangement} of $I$, denoted as $B_I$, consists of $\binom{|I|}{2}$ hyperplanes which are defined by $$x_a = x_b$$ in $\mathbb{R}^I$, for $a\neq b \in I$. Its Coxeter Group is the group of bijections from $I$ to itself.

Note that for any hyperplane arrangement, one can associate a variety of geometric notions such as faces, flats, and more. In the case of the braid arrangement, these geometric notions correspond to well-known combinatorial objects, as described in \cite[Table 6.2]{MS}. We outline the bijections that appear in the following sections below.

\begin{center}
    face $\longleftrightarrow$ set composition \\
    chamber $\longleftrightarrow$ linear order \\
    flat $\longleftrightarrow$ set partition \\
    cone $\longleftrightarrow$ preorder \\
    top cone $\longleftrightarrow$ partial order
\end{center}

\begin{example} \label{B(I)}
    Figure \ref{B(I)_color} is the same picture as in \cite[p.143]{MS} but we change their labels $\{a,b,c,d\}$ to $\{x,y,z,w\}$ to prevent confusion in discussions of $ab$ and $cd$-indices. The braid arrangement on $I=\{x,y,z,w\}$ consists of 6 hyperplanes $\{m = n\mid m \neq n \in \{x,y,z,w\}\}$.

\begin{figure}[h!] 
\begin{center}

  \includegraphics[width=0.8\textwidth]{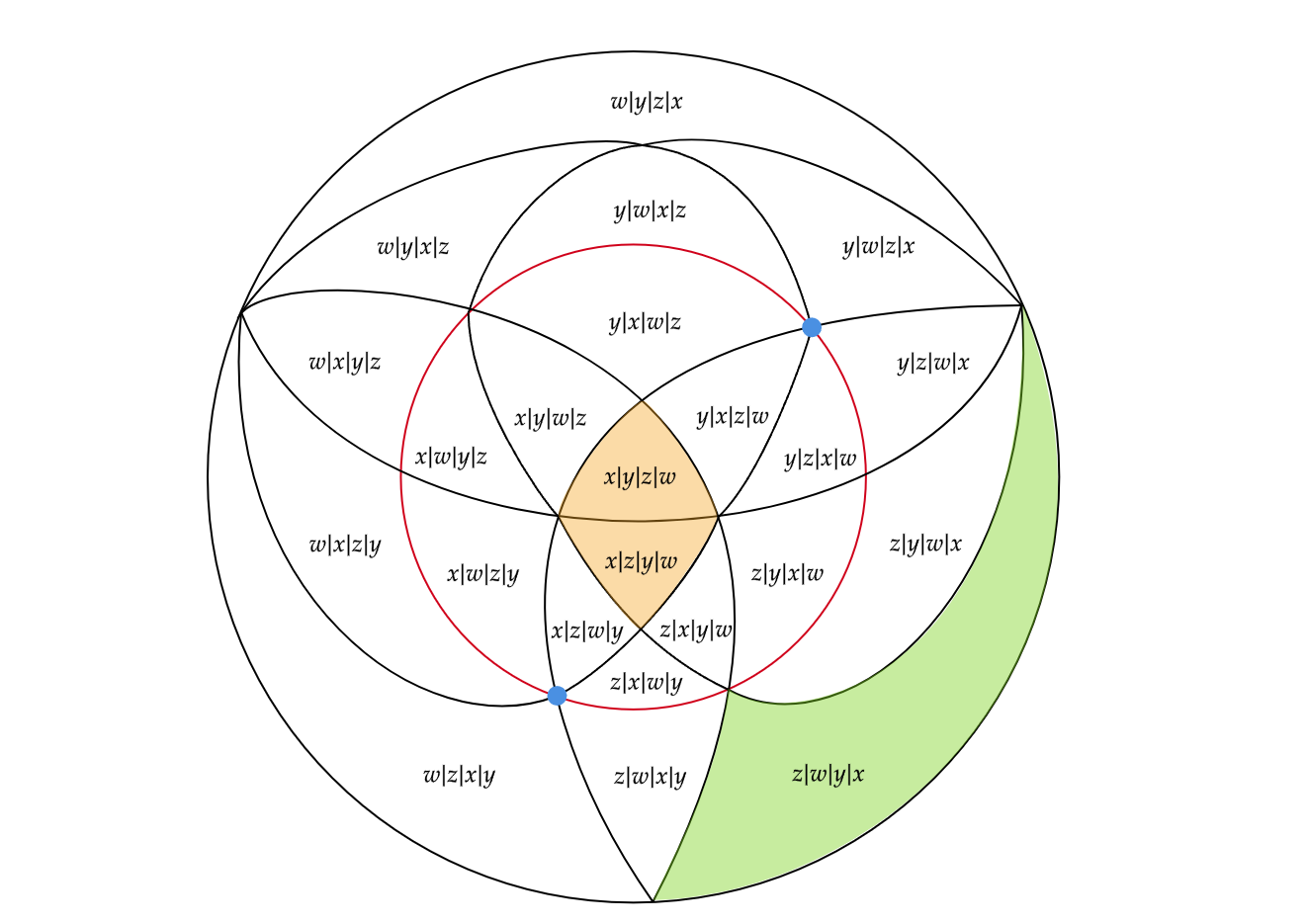}  
\caption{the braid arrangement on $I = \{x,y,z,w\}$}\label{B(I)_color}
\end{center}
\end{figure}

The green chamber labeled $z|w|y|x$ corresponds to the linear order $z<w<y<x$. The top cone (cone containing at least one chamber) in orange corresponds to the partial order on $\{x,y,z,w\}$ defined by relations $x<y$, $x<z$, $x<w$, $y<w$, $z<w$. The hyperplane in red ($x=w$) is also a flat corresponding to the set partition $\{xw,y,z\}$ with maximal faces permutations of $xw|y|z$. The flat represented by the two blue points corresponds to the set partition $\{xzw,y\}$ with maximal faces permutations of $xzw|y$.
\end{example}

\begin{example}\label{star_example}
In Figure \ref{B(I)_color}, consider the face corresponding to the set composition $x|yz|w$, then $$\starr(x|yz|w) = \{x|yz|w, x|y|z|w, x|z|y|w\}.$$ $$\mathrm{top}\text{-}\mathrm{star}(x|yz|w) = \{x|y|z|w, x|z|y|w\}$$
\end{example}

\begin{example}\label{Tits_product_example}
In Figure \ref{B(I)_color}, consider the faces $F$, $G$, corresponding to the set composition $yz|x|w$, and $z|w|y|x$, respectively, then the Tits product $FG$ is the chamber $z|y|x|w$.
\end{example}

We can apply those geometric descriptions on enumeration of (labeled) objects of Hopf monoids.

\section{The Hopf monoid of convex geometries and the invariants}\label{convex_geometries}

\subsection{Introduction}

Let $I$ be a finite set. A \emph{partial order} on $I$ is a relation on $I$ that is reflexive, antisymmetric, and transitive. For a partial order $p$ on $I$ and elements $a,b\in I$, $a$ is related to $b$ by $p$ is written as $a\leq_p b$. We may omit $p$ and write $a\leq b$ if there is no confusion of $p$. A partial order with no relations is called an \emph{antichain}.

A subset $S\subseteq I$ is called a \emph{lower set} of $p$ if for any $s\in S$, $a\in I$ with $a\leq_p s$, then $a\in S$. Specifically,  $\emptyset$ is a lower set for any partial order. 

Suppose $I = S\sqcup T$. Given two partial order $p_1$ on $S$, and $p_2$ on $T$, the \emph{parallel composition} $p_1\parallel p_2$ is the partial order on $I$ in which there are no relations between $S$ and T, while these sets are ordered according to $p_1$ and $p_2$, respectively. Given a partial order $p$ on $I$, the \emph{restriction} of $p$ on $S$, namely $p|_S$, is a partial order on $S$ where the relations are induced from $p$.

\begin{example}
Let $I = \{a,b,c,d,e\}$, $S = \{a,b,c\}$, $T = \{d,e\}$. Let $p_1$, $p_2$, $p$ be partial orders on $S$, $T$, $I$, respectively, with the following Hasse diagrams.

\tikzset{every picture/.style={line width=0.75pt}}    

\begin{center}

\begin{tikzpicture}[x=0.6pt,y=0.6pt,yscale=-1,xscale=1]

\draw    (498.75,151.67) -- (535,219.67) ;
 
\draw    (570.75,151.67) -- (535,219.67) ;
 
\draw    (534.5,83.67) -- (570.75,151.67) ;
 
\draw    (534.5,83.67) -- (498.75,151.67) ;

\draw    (606.5,83.67) -- (570.75,151.67) ;

\draw  [fill={rgb, 255:red, 0; green, 0; blue, 0 }  ,fill opacity=1 ] (533.17,218.83) .. controls (533.17,217.82) and (533.99,217) .. (535,217) .. controls (536.01,217) and (536.83,217.82) .. (536.83,218.83) .. controls (536.83,219.85) and (536.01,220.67) .. (535,220.67) .. controls (533.99,220.67) and (533.17,219.85) .. (533.17,218.83) -- cycle ;

\draw  [fill={rgb, 255:red, 0; green, 0; blue, 0 }  ,fill opacity=1 ] (496.92,151.67) .. controls (496.92,150.65) and (497.74,149.83) .. (498.75,149.83) .. controls (499.76,149.83) and (500.58,150.65) .. (500.58,151.67) .. controls (500.58,152.68) and (499.76,153.5) .. (498.75,153.5) .. controls (497.74,153.5) and (496.92,152.68) .. (496.92,151.67) -- cycle ;

\draw  [fill={rgb, 255:red, 0; green, 0; blue, 0 }  ,fill opacity=1 ] (568.92,151.67) .. controls (568.92,150.65) and (569.74,149.83) .. (570.75,149.83) .. controls (571.76,149.83) and (572.58,150.65) .. (572.58,151.67) .. controls (572.58,152.68) and (571.76,153.5) .. (570.75,153.5) .. controls (569.74,153.5) and (568.92,152.68) .. (568.92,151.67) -- cycle ;

\draw  [fill={rgb, 255:red, 0; green, 0; blue, 0 }  ,fill opacity=1 ] (604.83,83.67) .. controls (604.83,82.65) and (605.65,81.83) .. (606.67,81.83) .. controls (607.68,81.83) and (608.5,82.65) .. (608.5,83.67) .. controls (608.5,84.68) and (607.68,85.5) .. (606.67,85.5) .. controls (605.65,85.5) and (604.83,84.68) .. (604.83,83.67) -- cycle ;

\draw  [fill={rgb, 255:red, 0; green, 0; blue, 0 }  ,fill opacity=1 ] (532.67,83.67) .. controls (532.67,82.65) and (533.49,81.83) .. (534.5,81.83) .. controls (535.51,81.83) and (536.33,82.65) .. (536.33,83.67) .. controls (536.33,84.68) and (535.51,85.5) .. (534.5,85.5) .. controls (533.49,85.5) and (532.67,84.68) .. (532.67,83.67) -- cycle ;

\draw    (77.75,137.67) -- (114,205.67) ;

\draw    (149.75,137.67) -- (114,205.67) ;

\draw  [fill={rgb, 255:red, 0; green, 0; blue, 0 }  ,fill opacity=1 ] (112.17,204.83) .. controls (112.17,203.82) and (112.99,203) .. (114,203) .. controls (115.01,203) and (115.83,203.82) .. (115.83,204.83) .. controls (115.83,205.85) and (115.01,206.67) .. (114,206.67) .. controls (112.99,206.67) and (112.17,205.85) .. (112.17,204.83) -- cycle ;

\draw  [fill={rgb, 255:red, 0; green, 0; blue, 0 }  ,fill opacity=1 ] (75.92,137.67) .. controls (75.92,136.65) and (76.74,135.83) .. (77.75,135.83) .. controls (78.76,135.83) and (79.58,136.65) .. (79.58,137.67) .. controls (79.58,138.68) and (78.76,139.5) .. (77.75,139.5) .. controls (76.74,139.5) and (75.92,138.68) .. (75.92,137.67) -- cycle ;

\draw  [fill={rgb, 255:red, 0; green, 0; blue, 0 }  ,fill opacity=1 ] (147.92,137.67) .. controls (147.92,136.65) and (148.74,135.83) .. (149.75,135.83) .. controls (150.76,135.83) and (151.58,136.65) .. (151.58,137.67) .. controls (151.58,138.68) and (150.76,139.5) .. (149.75,139.5) .. controls (148.74,139.5) and (147.92,138.68) .. (147.92,137.67) -- cycle ;

\draw    (315,127) -- (315,215) ;

\draw  [fill={rgb, 255:red, 0; green, 0; blue, 0 }  ,fill opacity=1 ] (313.17,128.83) .. controls (313.17,127.82) and (313.99,127) .. (315,127) .. controls (316.01,127) and (316.83,127.82) .. (316.83,128.83) .. controls (316.83,129.85) and (316.01,130.67) .. (315,130.67) .. controls (313.99,130.67) and (313.17,129.85) .. (313.17,128.83) -- cycle ;

\draw  [fill={rgb, 255:red, 0; green, 0; blue, 0 }  ,fill opacity=1 ] (313.17,215) .. controls (313.17,213.99) and (313.99,213.17) .. (315,213.17) .. controls (316.01,213.17) and (316.83,213.99) .. (316.83,215) .. controls (316.83,216.01) and (316.01,216.83) .. (315,216.83) .. controls (313.99,216.83) and (313.17,216.01) .. (313.17,215) -- cycle ;

\draw (542.29,210.24) node [anchor=north west][inner sep=0.75pt]   [align=left] {$\displaystyle a$};

\draw (577.43,142.1) node [anchor=north west][inner sep=0.75pt]   [align=left] {$\displaystyle d$};

\draw (483.29,142.24) node [anchor=north west][inner sep=0.75pt]   [align=left] {$\displaystyle b$};

\draw (539,67.67) node [anchor=north west][inner sep=0.75pt]   [align=left] {$\displaystyle c$};

\draw (611.43,67.1) node [anchor=north west][inner sep=0.75pt]   [align=left] {$\displaystyle e$};

\draw (121.29,196.24) node [anchor=north west][inner sep=0.75pt]   [align=left] {$\displaystyle a$};

\draw (156.43,128.1) node [anchor=north west][inner sep=0.75pt]   [align=left] {$\displaystyle c$};

\draw (62.29,128.24) node [anchor=north west][inner sep=0.75pt]   [align=left] {$\displaystyle b$};

\draw (327.29,205.24) node [anchor=north west][inner sep=0.75pt]   [align=left] {$\displaystyle d$};

\draw (329.43,116.1) node [anchor=north west][inner sep=0.75pt]   [align=left] {$\displaystyle e$};

\draw (105,246) node [anchor=north west][inner sep=0.75pt]   [align=left] {$\displaystyle p_{1}$};

\draw (309,246) node [anchor=north west][inner sep=0.75pt]   [align=left] {$\displaystyle p_{2}$};

\draw (531.5,245.5) node [anchor=north west][inner sep=0.75pt]   [align=left] {$\displaystyle p$};

\end{tikzpicture}
\end{center}

Then we have the Hasse diagrams of $p_1 \parallel p_2$ and $p|_S$ as follows.
\begin{center}
 
\tikzset{every picture/.style={line width=0.75pt}} 

\begin{tikzpicture}[x=0.6pt,y=0.6pt,yscale=-1,xscale=1]

\draw    (135.75,71.67) -- (172,139.67) ;

\draw    (207.75,71.67) -- (172,139.67) ;

\draw  [fill={rgb, 255:red, 0; green, 0; blue, 0 }  ,fill opacity=1 ] (170.17,138.83) .. controls (170.17,137.82) and (170.99,137) .. (172,137) .. controls (173.01,137) and (173.83,137.82) .. (173.83,138.83) .. controls (173.83,139.85) and (173.01,140.67) .. (172,140.67) .. controls (170.99,140.67) and (170.17,139.85) .. (170.17,138.83) -- cycle ;

\draw  [fill={rgb, 255:red, 0; green, 0; blue, 0 }  ,fill opacity=1 ] (133.92,71.67) .. controls (133.92,70.65) and (134.74,69.83) .. (135.75,69.83) .. controls (136.76,69.83) and (137.58,70.65) .. (137.58,71.67) .. controls (137.58,72.68) and (136.76,73.5) .. (135.75,73.5) .. controls (134.74,73.5) and (133.92,72.68) .. (133.92,71.67) -- cycle ;

\draw  [fill={rgb, 255:red, 0; green, 0; blue, 0 }  ,fill opacity=1 ] (205.92,71.67) .. controls (205.92,70.65) and (206.74,69.83) .. (207.75,69.83) .. controls (208.76,69.83) and (209.58,70.65) .. (209.58,71.67) .. controls (209.58,72.68) and (208.76,73.5) .. (207.75,73.5) .. controls (206.74,73.5) and (205.92,72.68) .. (205.92,71.67) -- cycle ;

\draw    (270,60) -- (270,148) ;

\draw  [fill={rgb, 255:red, 0; green, 0; blue, 0 }  ,fill opacity=1 ] (268.17,61.83) .. controls (268.17,60.82) and (268.99,60) .. (270,60) .. controls (271.01,60) and (271.83,60.82) .. (271.83,61.83) .. controls (271.83,62.85) and (271.01,63.67) .. (270,63.67) .. controls (268.99,63.67) and (268.17,62.85) .. (268.17,61.83) -- cycle ;

\draw  [fill={rgb, 255:red, 0; green, 0; blue, 0 }  ,fill opacity=1 ] (268.17,148) .. controls (268.17,146.99) and (268.99,146.17) .. (270,146.17) .. controls (271.01,146.17) and (271.83,146.99) .. (271.83,148) .. controls (271.83,149.01) and (271.01,149.83) .. (270,149.83) .. controls (268.99,149.83) and (268.17,149.01) .. (268.17,148) -- cycle ;

\draw    (473.51,88.26) -- (473.51,152.52) ;

\draw  [fill={rgb, 255:red, 0; green, 0; blue, 0 }  ,fill opacity=1 ] (472.17,89.6) .. controls (472.17,88.86) and (472.77,88.26) .. (473.51,88.26) .. controls (474.25,88.26) and (474.84,88.86) .. (474.84,89.6) .. controls (474.84,90.33) and (474.25,90.93) .. (473.51,90.93) .. controls (472.77,90.93) and (472.17,90.33) .. (472.17,89.6) -- cycle ;
 
\draw  [fill={rgb, 255:red, 0; green, 0; blue, 0 }  ,fill opacity=1 ] (472.17,152.52) .. controls (472.17,151.78) and (472.77,151.18) .. (473.51,151.18) .. controls (474.25,151.18) and (474.84,151.78) .. (474.84,152.52) .. controls (474.84,153.26) and (474.25,153.86) .. (473.51,153.86) .. controls (472.77,153.86) and (472.17,153.26) .. (472.17,152.52) -- cycle ;

\draw    (473.51,26.67) -- (473.51,90.93) ;

\draw  [fill={rgb, 255:red, 0; green, 0; blue, 0 }  ,fill opacity=1 ] (471.67,24.83) .. controls (471.67,23.82) and (472.49,23) .. (473.51,23) .. controls (474.52,23) and (475.34,23.82) .. (475.34,24.83) .. controls (475.34,25.85) and (474.52,26.67) .. (473.51,26.67) .. controls (472.49,26.67) and (471.67,25.85) .. (471.67,24.83) -- cycle ;

\draw (179.29,130.24) node [anchor=north west][inner sep=0.75pt]   [align=left] {$\displaystyle a$};

\draw (214.43,62.1) node [anchor=north west][inner sep=0.75pt]   [align=left] {$\displaystyle c$};

\draw (120.29,62.24) node [anchor=north west][inner sep=0.75pt]   [align=left] {$\displaystyle b$};

\draw (282.29,138.24) node [anchor=north west][inner sep=0.75pt]   [align=left] {$\displaystyle d$};
 
\draw (284.43,49.1) node [anchor=north west][inner sep=0.75pt]   [align=left] {$\displaystyle e$};
 
\draw (193,198) node [anchor=north west][inner sep=0.75pt]   [align=left] {$\displaystyle p_{1} \parallel \ p_{2}$};
 
\draw (480.99,142.97) node [anchor=north west][inner sep=0.75pt]   [align=left] {$\displaystyle a$};
 
\draw (482.56,77.87) node [anchor=north west][inner sep=0.75pt]   [align=left] {$\displaystyle b$};
 
\draw (483.43,14.1) node [anchor=north west][inner sep=0.75pt]   [align=left] {$\displaystyle c$};
 
\draw (462,197) node [anchor=north west][inner sep=0.75pt]   [align=left] {$\displaystyle p|_{S}$};

\end{tikzpicture}
   
\end{center}

\end{example}

The structure of the Hopf monoid of partial orders was discussed by Aguiar and Bastidas in \cite{ABV}. Specifically, they discussed two canonical bases related by M\"obius inversion and the corresponding dual bases. We focus on the $\mathtt{G}$ basis in their discussion. This is also the basis discussed by Aguiar and Mahajan in \cite{MR2724388} when they described the Hopf monoid of \emph{closure operators}, which we will look into when we discuss \emph{convex geometries}.

For each finite set $I$, let $\po[I]$ be the vector space with basis the set of all partial orders on $I$. Given $I=S\sqcup T$ (we allow $S$ and $T$ to be the empty set), partial orders $p$ on $S$, $q$ on $T$, and $r$ on $I$, let $\mu_{S,T}({p_1}, {p_2})= p_1 \parallel p_2$,

$$\Delta_{S,T}(p) = 
\begin{cases} 
{p|_S}\otimes {p|_T} & \text{if $S$ is a \emph{lower set} of $p$}, \\ 
0 & \text{otherwise.}
\end{cases}$$

In the above definition, $S$ is a \emph{lower set} of $p$ if for any $s\in S$, $a\in I$ with $a\leq_p s$, then $a\in S$. Extend $\mu_{S,T}$ and $\Delta_{S,T}$ linearly, then $\po$ equipped with $\mu$ and $\Delta$ is a Hopf monoid. In \cite[Section 13.9.5]{MR2724388} it was mentioned that there is a Hopf monoid structure of convex geometries which admits $\po$ as a Hopf submonoid. We give a discussion on it as follows.

Let $I$ be a finite set, and let $2^I$ denote its power set. A \emph{closure operator} on $I$ is a map $$ c: 2^I \rightarrow 2^I$$ such that for every $A$, $B\in 2^I$, \begin{itemize}
    \item $c(A) = c^2(A)$.
    \item $A \subseteq c(A)$.
    \item If $A\subseteq B$, then $c(A)\subseteq c(B)$.
\end{itemize}

The subset $S\subseteq I$ with $c(S) = S$ is called a \emph{closed set}. In the following content, if we write $cl = \{S_1,...,S_k\}$, then this specifies $cl$ by the associated closed sets $S_1....,S_k$. A closure operator is called \emph{loopless} if $c(\emptyset) = \emptyset$. In the setting of Hopf monoids, the closure operators are labeled. From now on we assume all closure operators that we discuss are loopless.

We have the notions of merging and breaking on closure operators. For $c$, $d$ closure operators on $S$, $T$, respectively, and $e$ a closure operator on $I = S\sqcup T$, we have the \emph{direct sum}, $c\oplus d$ such that for $A\subseteq I$, $$(c\oplus d)(A) = c(A\cap S) \cup d(A\cap T).$$ We have \emph{restriction} of $c$ on $S$ and \emph{contraction} of $c$ through $S$ such that for $A\subseteq S$, $B\subseteq T$, $$(e|_S)(A) = e(A), \text{ } (e/_S)(B) = e(S\cup B)\cap T.$$

Consider $\lc$ such that for each finite set $I$, $\lc[I]$ is the vector space spanned by all loopless closure operators. Define $\mu$, $\Delta$ on $\lc$ as follows. For $c$, $d$ closure operators on $S$, $T$, respectively, and $e$ a closure operator on $I = S\sqcup T$, $$\mu_{S,T}(c,d) = c\oplus d.$$
$$\Delta_{S,T}(e) = 
\begin{cases} 
{e|_S}\otimes {e/_S} & \text{if $S$ is closed}, \\ 
0 & \text{otherwise.}
\end{cases}$$

Then $\lc$ is a Hopf monoid \cite[Section 13.8]{MR2724388}.

A \emph{convex geometry} with ground set $I$ is a closure operator $g: 2^I \rightarrow 2^I$ that satisfies the \emph{anti-exchange axiom}. That is, if $a\in g(A\cup \{b\})$, $a\neq b$, and $a$, $b \notin g(A)$, then $b \notin g(A\cup \{a\})$ for every $A \in 2^I$ and $a$, $b \in I$. If a subset $A\subseteq I$ is closed under $g$, then we say $A$ is a \emph{convex set}.

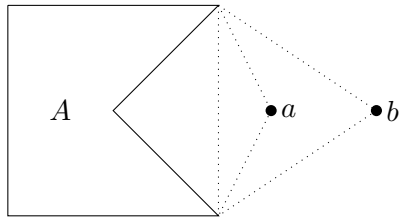
\begin{figure}[h]
    \centering
\begin{tikzpicture}[scale=0.7]
\draw (-2,0) -- (2,0) -- (0,-2) -- (2,-4) --(-2,-4) -- (-2,0) (-1,-2) node{$A$} (3,-2) node[right]{$a$} (5,-2) node[right]{$b$};
\draw[dotted] (2,0) -- (2,-4)  (2,0)--(3,-2) (2,0) -- (5,-2) (3,-2) -- (2,-4) (5,-2) -- (2,-4);
\fill (3,-2) {circle(3pt)} (5,-2) {circle(3pt)};
\end{tikzpicture}
    \caption{Convex geometry induced from Euclidean closure}
\end{figure}

The notion of convex geometries generalizes partial orders in sense of order ideals, because given a partial order $p$ on ground set $I$, we can define a convex geometry $g_p$ such that for $A\subseteq I$ 

\begin{equation}\label{partial_order_to_convex_geometries}
    g_p(A) = \{x \in I \mid \text{$x\leq a$ in $p$  for some $a\in A$} \}.
\end{equation}

Also, the species $\lcg$ with $\lcg[I]$ the vector space spanned by all loopless convex geometries, together with $\mu$, $\Delta$ described the same as those of $\lc$, carries a Hopf monoid structure. Indeed we have the following injective morphisms of Hopf monoids.

\begin{equation}\label{injections}
  \po \hookrightarrow \lcg \hookrightarrow \lc.  
\end{equation}

\subsection{Canonical characters and polynomial invariants}\label{characters_polynomial_invariants}

On $\lc$, we define four characters $\eta$, $\zeta$, $\varphi$, and $\varphi'$. Specifically, define $\eta$ with $\eta_I(c) = 1$ for all basis element $c\in \lc[I]$. We then set $\zeta= \overline{\eta}^{-1}$, $\varphi = \zeta\ast\eta$, and $\varphi' = \eta \ast \zeta.$ Because of the injective morphisms (\ref{injections}), these four characters are well-defined on $\lcg$ and $\po$. We refer to \cite{ABV} for detailed discussions on these characters on the Hopf monoid of partial orders $\po$, and we review some of their discussions before moving to convex geometries. Specifically, on $\po$,  $\zeta: \po \rightarrow \KE$ was computed as \begin{equation*}
    \zeta_I(p) = \begin{cases} 1 & \text{ if $p$ is the antichain on $I$}, \\0 & \text{otherwise} \end{cases}
\end{equation*}
for basis element $p\in\po[I]$. The polynomial invariant $\chi^\eta_I(n)$ is the \emph{order polynomial}, which was introduced by Stanley \cite[Definition (i)]{MR0269545} and counts the number of $p$ order-preserving maps $f: I\rightarrow [n]$. The polynomial invariant $\chi^\zeta_I(p)(n)$ counts the number of strict $p$ order-preserving maps $I\rightarrow [n]$, so it corresponds to the \emph{strict order polynomial}.

One interesting application of the reciprocity result in view of characters is that we can obtain the Stanley's reciprocity \cite[Theorem 3]{MR0269545}. Specifically, by the fact that $\zeta = \overline{\eta}^{-1}$, we have $$(-1)^{|I|}\chi^\eta_I(p)(-n) = \chi^\zeta_I(p)(n).$$

For $p\in\po[I]$, let $\Min(p)$ be the subset of $I$ consisting of minimal elements of $p$, and let $\Max(p)$ be the subset of $I$ consisting of maximal elements of $p$. Let $\min(p) = |\Min(p)|$ and $\max(p) = |\Max(p)|.$ Then we have $$\varphi_I(p) = 2^{\min(p)}, \text{ }\varphi'_I(p) = 2^{\max(p)}.$$

Let $\llbracket n \rrbracket = [n]\sqcup [\overline{n}]$ equipped with the linear order $\overline{1} < 1 < \overline{2}<2<...<\overline{n}<n.$ Stembridge introduced the \emph{enriched order polynomial} of a partial order $p$ \cite[Section 4]{Stembridge}, which counts the number of functions $f: I\rightarrow \llbracket n \rrbracket$ that are order preserving, and in addition satisfies that $f^{-1}(\overline{i})$ is an antichain for each $i = 1,2,...,n$. 

Indeed, we have $\chi^\varphi_I(p)$ is the enriched order polynomial of $p$, and $\chi^{\varphi'}_p$ counts the number of functions $f: I\rightarrow \llbracket n \rrbracket$ that are order preserving, and in addition satisfies that $f^{-1}(i)$ is an antichain for each $i = 1,2,...,n$. Since $\varphi$ and $\varphi'$ are odd, we can use the self-reciprocity properties of $\chi^\varphi_I$ and $\chi^{\varphi'}$ to obtain the Stembridge's reciprocity \cite[Proposition 4.2]{Stembridge}: $$\chi^{\varphi}_I(p)(-n) = (-1)^{|I|}\chi^{\varphi}_I(p)(n), \text{ }\chi^{\varphi'}_I(p)(-n) = (-1)^{|I|}\chi^{\varphi'}_I(p)(n).$$

In the following discussion, we extend the discussion of these characters to the setting of convex geometries. We demonstrate that the associated polynomial invariants generalize the order and enriched order polynomials, and we prove the reciprocity results for them in a unified framework.

Recall for convex geometry $g$ on ground set $I$, $\eta_I(g) = 1$ . We say that $g$ is \emph{discrete} if  $g(A) = A$ for all $A \in 2^{I}$. Then we have $\overline{\eta}^{-1} := \zeta: \klcgd \rightarrow \KE$ as follows. 
 
 \begin{proposition}\label{inverse_eta_bar_cg}
     $\zeta_I(g) = \begin{cases}1 & \text{if }g\text{ is discrete}, \\ 0 & \text{otherwise}. \end{cases}$
 \end{proposition}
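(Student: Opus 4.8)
The plan is to identify $\zeta_I(g)$ with a Möbius function of the lattice of convex sets of $g$. For a character $\psi$ write $\overline{\psi}_I := (-1)^{|I|}\psi_I$; since $\overline{\psi\ast\phi}_I = \sum_{S\sqcup T = I}(-1)^{|S|}\psi_S\cdot(-1)^{|T|}\phi_T = (\overline{\psi}\ast\overline{\phi})_I$, the assignment $\psi\mapsto\overline{\psi}$ is an involutive automorphism of the group of characters, hence commutes with inversion. So $\zeta = \overline{\eta}^{\,-1} = \overline{\eta^{-1}}$, and as $\eta^{-1} = \eta\circ\s$ this gives $\zeta_I(g) = (-1)^{|I|}\,\eta_I(\s_I(g))$. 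Because $\eta$ assigns the value $1$ to every basis element, $\eta_I(\s_I(g))$ is simply the sum of the coefficients of $\s_I(g)$ when it is expanded in the basis of loopless convex geometries on $I$; I would compute this sum first.

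By Takeuchi's formula \cite[Proposition 8.13]{MR2724388}, $\s_I = \sum_{(S_1,\dots,S_k)\vDash I}(-1)^k\,\mu_{S_1,\dots,S_k}\circ\Delta_{S_1,\dots,S_k}$. For a set composition $(S_1,\dots,S_k)$ the term $\Delta_{S_1,\dots,S_k}(g)$ is nonzero exactly when every initial union $S_1\cup\cdots\cup S_j$ is a convex set of $g$, and in that case $\mu_{S_1,\dots,S_k}\Delta_{S_1,\dots,S_k}(g)$ is a single basis element, contributing the coefficient $(-1)^k$. Sending such a composition to the chain $\emptyset = C_0 \subsetneq C_1 \subsetneq \cdots \subsetneq C_k = I$ with $C_j = S_1\cup\cdots\cup S_j$ gives a bijection between compositions with $\Delta_{S_1,\dots,S_k}(g)\neq 0$ and chains from $\hat 0 = \emptyset$ to $\hat 1 = I$ in the lattice $\mathcal L_g$ of convex sets of $g$ (recall $\emptyset$ is convex, as $g$ is loopless), under which $(-1)^k$ is $(-1)$ raised to the length of the chain. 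By Philip Hall's theorem (the Möbius function as an alternating sum over chains), the sum of the coefficients of $\s_I(g)$ is $\mu_{\mathcal L_g}(\hat 0,\hat 1)$, so $\zeta_I(g) = (-1)^{|I|}\,\mu_{\mathcal L_g}(\hat 0,\hat 1)$.

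The crux will be evaluating this Möbius function, and this is where the anti-exchange axiom enters. The lattice $\mathcal L_g$ of convex sets of a convex geometry is meet-distributive \cite{EJ}, and the Möbius function of a meet-distributive lattice $\mathcal L$ vanishes at $(\hat 0,\hat 1)$ unless $\mathcal L$ is Boolean. Now $\mathcal L_g$ is Boolean precisely when every subset of $I$ is convex, i.e.\ precisely when $g$ is discrete, and in that case $\mathcal L_g = 2^I$, so $\mu_{\mathcal L_g}(\hat 0,\hat 1) = (-1)^{|I|}$. Therefore $\mu_{\mathcal L_g}(\hat 0,\hat 1) = (-1)^{|I|}$ when $g$ is discrete and $0$ otherwise, whence $\zeta_I(g) = (-1)^{|I|}(-1)^{|I|} = 1$ for discrete $g$ and $\zeta_I(g) = 0$ for every other $g$, as claimed. (One can also bypass the antipode entirely: the assignment sending $g$ to $1$ if $g$ is discrete and to $0$ otherwise is plainly a character, and it suffices to verify $\zeta\ast\overline{\eta} = e$ directly; the convolution collapses to $(-1)^{|I|}\sum_{C}(-1)^{|C|}$, the sum being over convex sets $C$ all of whose subsets are convex, and by the same meet-distributive Möbius formula this equals $\sum_{C\in\mathcal L_g}\mu_{\mathcal L_g}(\hat 0,C)$, which is $0$ for $|I|\geq 1$ by the defining relation of the Möbius function.)
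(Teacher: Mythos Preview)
Your proof is correct and takes a genuinely different route from the paper. The paper guesses the formula and then verifies that $\eta\ast\overline{\zeta}$ is the counit: it observes that for $I=S\sqcup T$, the conditions ``$S$ convex and $g/_S$ discrete'' are equivalent to $T\subseteq\Ex(I)$, so the convolution collapses to the alternating sum $\sum_{T\subseteq\Ex(I)}(-1)^{|T|}$, which vanishes unless $\Ex(I)=\varnothing$; one then checks (via the existence of maximal flags of convex sets) that $\Ex(I)=\varnothing$ forces $I=\varnothing$. You instead compute $\zeta_I(g)$ from first principles through Takeuchi and Philip Hall, identifying it with $(-1)^{|I|}\mu_{\mathcal L_g}(\hat0,\hat1)$, and then invoke the vanishing of the M\"obius function on non-Boolean meet-distributive lattices. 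Your argument is more structural and makes the role of the lattice $\mathcal L_g$ explicit; the paper's is more elementary, needing only a binomial identity and the single convexity fact $\Ex(I)\neq\varnothing$ for $I\neq\varnothing$. Your parenthetical alternative (checking $\zeta\ast\overline{\eta}=e$ via totally convex sets and $\sum_{C\in\mathcal L_g}\mu(\hat0,C)=0$) is also correct, though the paper's choice of $\eta\ast\overline{\zeta}$ rather than $\zeta\ast\overline{\eta}$ leads to the cleaner sum over $\Ex(I)$ instead of over totally convex sets. One small remark: the M\"obius vanishing you use follows at once from Rota's Crosscut theorem applied to the coatoms $\{I\setminus\{a\}:a\in\Ex(I)\}$ of $\mathcal L_g$, together with the observation that $\Ex(I)=I$ forces $g$ to be discrete (since convex sets are closed under intersection); it may be worth making this explicit rather than leaving it as a citation.
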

\begin{proof}
    Let $K\subseteq I$ be convex. Denote the set of \emph{extreme points} of $K$ as $$\Ex(K) := \{a\in K \mid a\notin g({K\backslash a})\} = \{a\in K \mid K \backslash a \text{ is convex}\}.$$
Then it is easy to see that for $I = S\sqcup T$, 
    $S\subseteq I \text{ is convex and } g/_S \text{ is discrete iff }T\subseteq \Ex(I)$.

Computing the convolution product of $\eta$ and $\overline{\zeta}$, we have $$(\eta*\overline{\zeta})_I(g) = \sum_{I = S\sqcup T,\text{ }\text{$S$ is convex}}\eta_S({g|_S})\overline{\zeta}_T({g/_S})$$ $$ \hspace{10mm}= \sum_{I = S\sqcup T,\text{ } \text{$S$ is convex, $g/_S$ is discrete}}(-1)^{|T|} = \sum_{I = S\sqcup T,\text{ } T\subseteq \ex(I)} (-1)^{|T|}. \hspace{59mm}$$  This is the sum of the M\"obius function over the Boolean poset of $\Ex(I)$. Hence we have $$(\eta * \overline{\zeta})_I(g)\begin{cases}1 & \text{ if $\Ex(I) = \varnothing$,} \\ 0 & \text{ otherwise.} \end{cases}$$

We claim that $\Ex(I) = \varnothing \Leftrightarrow I = \varnothing$. The backward direction is clear. So suppose $I\neq \varnothing$ but $\ex(I) = \varnothing$. That is, for all $S\subseteq I$ with $|S| = |I|-1$, $g(S) = I$, so there is no convex subset of $I$ with size $|I|-1$. But this contradicts \cite[theorem 2.1 (b)]{EJ}, since we can obtain a size $|I|-1$ convex set by constructing a flag of convex sets $\varnothing = S_0 \subseteq S_1\subseteq ...\subseteq S_{|I|-1} \subseteq S_{|I|} = I$ with $|S_i| = i$. Hence we conclude that $\overline{\zeta}$ is the convolution inverse of $\eta$.
\end{proof}

Let $g$ be a convex geometry on $I$. Let $f: I\rightarrow [n]$. For a convex set $K$, let $f^K = \max_{x\in K}f(x)$.
 $f$ is called \emph{extremal} \cite[p.260]{EJ} if for each convex set $K\subseteq I$, $$\{x\in K \mid f(x) = f^K\} \cap \Ex(K) \neq \varnothing.$$
$f$ is called \emph{strictly extremal}\cite[p.260]{EJ} if for each convex set $K$, $$\{x\in K \mid f(x) = f^K\} \subseteq \Ex(K).$$ 

Note that when $g = g_p$ as in (\ref{partial_order_to_convex_geometries}), the corresponding extremal functions are precisely $p$ order preserving functions, and the corresponding strictly extremal functions are strict $p$ order preserving functions.

To describe the polynomial invariant associated with $\eta$ and $\zeta$, we define another class of functions of $f: I\rightarrow [n]$ and show an equivalence condition. Specifically, we say that $f$ is \emph{convex} if for any convex set $S$ on $[n]$, $f^{-1}(S)$ is convex in $g$. We say that $f$ is \emph{strictly convex} if $f$ is convex and $g_{f^{-1}[i]:f^{-1}[i+1]}$ is discrete for $1\leq i \leq n-1$.

\begin{lemma}\label{lemma1eq}
For $A\subseteq I$, $\Ex(g({A})) \subseteq A$. 
\end{lemma}
\begin{proof}

    If $a\notin A$, then $A\backslash a = A\Rightarrow g({A\backslash a}) = g(A)$. Then $g(g(A)\backslash a) = g(A)$, and so $a\notin \Ex(g(A))$.
\end{proof} 

\begin{lemma}\label{lemma2eq}\textnormal{\cite[Theorem 2.1]{EJ}}
For convex set $K\subseteq I$, $K = g(\Ex(K))$.
\end{lemma}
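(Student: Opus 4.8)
The plan is to prove the two inclusions separately: $g(\Ex(K)) \subseteq K$ is immediate, and the reverse inclusion $K \subseteq g(\Ex(K))$ carries all the content. For the easy direction, note that $\Ex(K) \subseteq K$ directly from the definition of $\Ex$ (equivalently, apply Lemma~\ref{lemma1eq} with $A = K$, using $g(K) = K$, to get $\Ex(K) = \Ex(g(K)) \subseteq K$); then monotonicity of $g$ together with convexity of $K$ gives $g(\Ex(K)) \subseteq g(K) = K$.

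For the reverse inclusion I would argue by contradiction. Suppose $L := g(\Ex(K)) \subsetneq K$. Then $L$ is convex, since $g(L) = g(g(\Ex(K))) = g(\Ex(K)) = L$, and $\Ex(K) \subseteq L$. Now I would invoke the accessibility (one-step extension) property of convex geometries --- the same ingredient of \cite[Theorem~2.1(b)]{EJ} used in the proof of Proposition~\ref{inverse_eta_bar_cg} to build flags of convex sets, applied here to the pair $L \subsetneq K$ (equivalently, to the restricted convex geometry $g|_K$). Iterating the one-step extension from $L$ up to $K$ produces, in particular, a convex set $C$ with $L \subseteq C \subsetneq K$ and $|C| = |K| - 1$; write $C = K \setminus \{x\}$. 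Since $L \subseteq C$ we have $x \notin L$, hence $x \notin \Ex(K)$. On the other hand $K \setminus x = C = g(C)$ is convex, so $x \notin g(K \setminus x)$, which by definition of $\Ex$ means $x \in \Ex(K)$ --- contradicting $x \notin \Ex(K)$. Therefore $L = K$, as desired.

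The one point requiring care, and the crux of the argument, is the \emph{direction} in which the chain of convex sets is traversed: one must descend from $K$ by deleting an element, not ascend from $L$ by adjoining one. Producing instead a convex set $L \cup \{x\} \subseteq K$ would only show that $x$ is extreme in $L \cup \{x\}$, which need not make $x$ extreme in the larger set $K$, since extremality is not inherited upward; the contradiction is reached precisely by exhibiting an element that is extreme in $K$ itself yet lies outside $g(\Ex(K))$. A proof by induction on $|K|$ is also available --- delete an extreme point $a$ of $K$ and apply the inductive hypothesis to the convex set $K \setminus a$ --- but the inductive step needs essentially the same structural input (existence of an extreme point, convexity of $K \setminus a$), so it is not genuinely simpler than the argument above.
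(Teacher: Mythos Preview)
The paper does not prove this lemma; it is cited directly from \cite[Theorem 2.1]{EJ} without argument, so there is nothing to compare against. Your proof is correct and supplies what the paper omits.

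One small imprecision worth tightening: invoking \cite[Theorem 2.1(b)]{EJ} on the restriction $g|_K$ gives a saturated flag of convex sets from $\varnothing$ up to $K$, but not one that necessarily passes through your chosen $L$. To obtain a saturated chain from $L$ to $K$ you should instead apply the flag property to the minor $(g|_K)/_L$, which is again a convex geometry on $K\setminus L$ (this closure under minors is exactly what underlies the Hopf monoid structure on $\lcg$), and then lift the resulting flag back to convex sets of $g$ lying between $L$ and $K$ via the correspondence in Lemma~\ref{chainconvex} (whose proof is independent of the present lemma). Alternatively, one may appeal directly to the fact that $L_g$ is graded by cardinality, itself one of the equivalent conditions in Edelman--Jamison's theorem. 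Either route yields the top step $C=K\setminus\{x\}$ with $x\notin L$, and your contradiction $x\in\Ex(K)\subseteq L$ goes through. Since both the statement being proved and the chain property you invoke are clauses of the same equivalence theorem in \cite{EJ}, what you have written is, in effect, a derivation of one characterization of convex geometries from another---perfectly legitimate, but worth saying explicitly.
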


\begin{proposition}\label{extremal=convex}
$f$ is extremal iff $f$ is convex. $f$ is strictly extremal iff $f$ is strictly convex.
\end{proposition}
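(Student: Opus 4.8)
The plan is to prove the two stated equivalences in turn, obtaining the strict version from the unstrict one. Write $f^{-1}[v]=\{x\in I:f(x)\le v\}$. Two basic facts will be used throughout: the convex (i.e.\ closed) sets of $g$ are closed under intersection, since for convex $K,L$ one has $g(K\cap L)\subseteq g(K)\cap g(L)=K\cap L$; and for convex $K$ one has $\Ex(K)\subseteq K$ (immediate, see Lemma~\ref{lemma1eq}) together with $K=g(\Ex(K))$ (Lemma~\ref{lemma2eq}). For the first equivalence, to see ``convex $\Rightarrow$ extremal'' take a nonempty convex $K$ with $m=f^K$; if no extreme point of $K$ had value $m$ then every element of $\Ex(K)$ would have value $\le m-1$, so $\Ex(K)\subseteq K\cap f^{-1}[m-1]$, which is convex because $f^{-1}[m-1]$ is; then $K=g(\Ex(K))\subseteq g(K\cap f^{-1}[m-1])=K\cap f^{-1}[m-1]$ would force $f^K\le m-1$, a contradiction. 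For ``extremal $\Rightarrow$ convex'' the key is a peeling step: if $L$ is convex and $v=f^L$, then $L\setminus\{x\in L:f(x)=v\}$ is convex --- indeed extremality provides $a\in\Ex(L)$ with $f(a)=v$, so $L\setminus a$ is convex, and if value-$v$ elements remain we repeat on $L\setminus a$ (terminating, $I$ being finite). Applying the peeling step repeatedly down the list of $f$-values shows $\{x\in L:f(x)\le v\}$ is convex for every convex $L$ and every $v$; taking $L=I$ shows every $f^{-1}[v]$ is convex, i.e.\ $f$ is convex.

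For the second equivalence, note first that strictly extremal $\Rightarrow$ extremal $\Rightarrow$ convex (by the first equivalence), so the clause ``$f$ is convex'' in the definition of strict convexity is automatically present in both directions, and the content is to match ``strictly extremal'' with discreteness of the contractions $g_{f^{-1}[i]:f^{-1}[i+1]}$ (for the appropriate range of $i$, which must also include the degenerate case $i=0$ with $f^{-1}[0]=\varnothing$, i.e.\ discreteness of $g|_{f^{-1}(1)}$). When $f$ is convex, $g(f^{-1}[i]\cup B)\subseteq g(f^{-1}[i+1])=f^{-1}[i+1]$ for any $B\subseteq f^{-1}(i+1)$, so $g_{f^{-1}[i]:f^{-1}[i+1]}(B)=g(f^{-1}[i]\cup B)\cap f^{-1}(i+1)$ is exactly the set of value-$(i+1)$ elements of the convex set $g(f^{-1}[i]\cup B)$; thus discreteness of this contraction means precisely that this set equals $B$ for all $B\subseteq f^{-1}(i+1)$.

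Granting that reformulation, ``strictly extremal $\Rightarrow$ strictly convex'' goes as follows: fix $i$ and nonempty $B\subseteq f^{-1}(i+1)$, put $K=g(f^{-1}[i]\cup B)$ (convex, with $f^K=i+1$); if some $c\in K$ had $f(c)=i+1$ and $c\notin B$, then strict extremality gives $c\in\Ex(K)$, while $f^{-1}[i]\cup B\subseteq K\setminus c$ forces $K=g(f^{-1}[i]\cup B)\subseteq g(K\setminus c)=K\setminus c$, a contradiction; hence the value-$(i+1)$ part of $K$ is $B$, which is the required discreteness. Conversely, for ``strictly convex $\Rightarrow$ strictly extremal'', $f$ is convex hence extremal; given convex $K$, $m=f^K$, and $a\in K$ with $f(a)=m$, observe $K\setminus a\subseteq f^{-1}[m-1]\cup B$ with $B=\{x\in K:f(x)=m\}\setminus a\subseteq f^{-1}(m)$, so $g(K\setminus a)\subseteq g(f^{-1}[m-1]\cup B)$, whose value-$m$ part is $B$ by discreteness; since $f(a)=m$ and $a\notin B$, this yields $a\notin g(K\setminus a)$, i.e.\ $a\in\Ex(K)$, so every value-$f^K$ element of $K$ is extreme.

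The step I expect to demand the most care is the handling of the contractions $g_{f^{-1}[i]:f^{-1}[i+1]}$: writing the restriction-then-contraction operator correctly, confirming it is indeed a (loopless) convex geometry, and in particular making sure the bottom layer $f^{-1}(1)$ is among the discreteness conditions --- without it the two notions genuinely diverge, as one already sees for three collinear points with $f$ constant. Everything else reduces to short applications of Lemmas~\ref{lemma1eq} and~\ref{lemma2eq} together with closure of convex sets under intersection.
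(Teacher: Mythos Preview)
Your proof is correct. For ``convex $\Rightarrow$ extremal'' you argue exactly as the paper does. For ``extremal $\Rightarrow$ convex'' you take a genuinely different route: the paper dispatches this in one stroke via Lemma~\ref{lemma1eq} --- setting $K=g(f^{-1}[m])$, if $K\supsetneq f^{-1}[m]$ then extremality places some $a\in\Ex(K)$ at level $f^K>m$, while Lemma~\ref{lemma1eq} forces $\Ex(K)\subseteq f^{-1}[m]$, a contradiction --- whereas you peel off one top-value extreme point at a time and iterate. The paper's argument is shorter and exploits Lemma~\ref{lemma1eq} in its full strength ($\Ex(g(A))\subseteq A$ for arbitrary $A$); your peeling is more elementary, needing only the definition of extreme point, and it incidentally yields the stronger fact that $\{x\in L:f(x)\le v\}$ is convex for every convex $L$, not just $L=I$.

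For the strict equivalence the paper gives only a one-line pointer, while you supply full details; your argument is correct. Your flag about the bottom layer is also on target: the range $1\le i\le n-1$ in the paper's definition of ``strictly convex'' omits discreteness of $g|_{f^{-1}(1)}$, and without it the equivalence fails (your constant-$f$ example on three collinear points already witnesses this). The intended range is $0\le i\le n-1$, consistent with the computation of $\chi^\zeta$ in Theorem~\ref{poly_cg_eta}, which requires discreteness of every minor in the iterated coproduct.
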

\begin{proof}
We show the first equivalence statement.
    
``$\Rightarrow$": suppose $f$ is extremal. Let $A$ be a convex set in $[n]$. By previous discussions, $A = [m]$ for some $m \leq n$. Consider $f^{-1}(A)$, and let $K = g(f^{-1}(A))$.

If $K \neq f^{-1}(A)$, by assumption there exists $a\in \Ex(K)$ such that $f(a) >m$. By Lemma \ref{lemma1eq}, $\Ex(K)\subseteq f^{-1}(A)$ and hence $ a\in f^{-1}(A)$. This contradicts $a\notin f^{-1}(A)$ since $f(a) > m$. So $f^{-1}(A)$ is convex.

``$\Leftarrow$": let $W\subseteq I$ be a convex set, let $i = f^W$. We need to show that $$\{x\in W: f(x) = i\} \cap \Ex(W)\neq \varnothing. $$

Suppose the intersection above is empty, i.e., for all $x\in \Ex(W)$, $f(x) < i$. Then $\Ex(W)\subseteq f^{-1}([i-1]) \Rightarrow g(\Ex(W)) \subseteq g(f^{-1}([i-1])) = f^{-1}([i-1])$. But by Lemma \ref{lemma2eq} we have $W = g(\Ex(W))$, so $W \subseteq f^{-1}([i-1])$, contradicting the fact that $f^W = i$.

The second statement follows from the first statement, the definition of strictly extremal functions, and the assumption $g_{f^{-1}[i]:f^{-1}[i+1]}$ is discrete for $1\leq i \leq n-1$.
\end{proof}

\begin{lemma}\label{chainconvex}
Let $g$ be a convex geometry on $I$. Then for $\varnothing \subseteq K_1\subseteq K_2 \subseteq I$ such that $K_1$ is convex, $$K_2\text{ is convex in $g$}\Leftrightarrow K_2\backslash K_1 \text{ is convex in }g/_{K_1}. $$
\end{lemma}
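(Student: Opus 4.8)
The plan is to unwind the definition of the contraction $g/_{K_1}$ and reduce the asserted equivalence to an elementary identity of subsets, using only the hypothesis $K_1 \subseteq K_2$ together with the closure-operator axioms for $g$.

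First, recall that for $B \subseteq I\backslash K_1$ we have $(g/_{K_1})(B) = g(K_1\cup B)\cap (I\backslash K_1)$. Applying this to $B = K_2\backslash K_1$ and using $K_1\subseteq K_2$, so that $K_1\cup(K_2\backslash K_1) = K_2$, we get $$(g/_{K_1})(K_2\backslash K_1) = g(K_2)\cap(I\backslash K_1).$$ Thus ``$K_2\backslash K_1$ is convex in $g/_{K_1}$'' is exactly the statement that $g(K_2)\cap(I\backslash K_1) = K_2\backslash K_1$, and it remains to prove that this holds if and only if $g(K_2) = K_2$.

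For the forward implication, if $g(K_2)=K_2$ then intersecting with $I\backslash K_1$ gives $g(K_2)\cap(I\backslash K_1) = K_2\cap(I\backslash K_1) = K_2\backslash K_1$. For the converse, suppose $g(K_2)\cap(I\backslash K_1) = K_2\backslash K_1$. By extensivity of $g$ and $K_1\subseteq K_2$ we have $K_1\subseteq K_2\subseteq g(K_2)$, so $g(K_2) = K_1\cup\big(g(K_2)\cap(I\backslash K_1)\big) = K_1\cup(K_2\backslash K_1) = K_2$; hence $K_2$ is convex.

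There is no substantial obstacle here: the argument rests only on bookkeeping of the inclusions $K_1\subseteq K_2\subseteq g(K_2)$, so that unions and complements along $K_1$ combine as expected, plus the remark that $g/_{K_1}$ is a closure operator on $I\backslash K_1$ (and, when $K_1$ is convex, again a convex geometry, as used in the definition of the Hopf monoid $\lcg$), so that ``convex in $g/_{K_1}$'' is a well-defined notion. In particular the anti-exchange axiom plays no role in the proof; the convexity of $K_1$ is not needed for the displayed equivalence itself but ensures the lemma is stated within the class of convex geometries in which it will be applied.
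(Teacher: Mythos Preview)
Your proof is correct and follows essentially the same approach as the paper: both unwind the definition of the contraction $g/_{K_1}$ to reduce the equivalence to the set-theoretic identity $g(K_2)\setminus K_1 = K_2\setminus K_1 \Leftrightarrow g(K_2)=K_2$, using only $K_1\subseteq K_2\subseteq g(K_2)$. Your converse is written as a direct decomposition $g(K_2)=K_1\cup\big(g(K_2)\cap(I\setminus K_1)\big)$ whereas the paper phrases it as a contradiction, but this is the same computation; your closing remark that anti-exchange is not used is also accurate.
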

\begin{proof}
``$\Rightarrow$": $g(K_2) = K_2 \Rightarrow g/_{K_1}(K_2\backslash K_1) = g(K_2)\backslash K_1 = K_2\backslash K_1$.

`` $\Leftarrow$": $g/_{K_1}(K_2\backslash K_1)= K_2\backslash K_1 \Rightarrow g((K_2 \backslash K_1)\cup K_1) \backslash K_1 = K_2 \backslash K_1 \Rightarrow g(K_2)\backslash K_1 = K_2 \backslash K_1$. Since $K_2 \subseteq g(K_2)$, if $K_2 \subsetneq g(K_2)$, since $K_1\subseteq K_2$, we have $g(K_2)\backslash K_1 \supsetneq K_2 \backslash K_1$, which is a contradiction. Hence $g(K_2) = K_2$.
\end{proof}

Let $\chi^\eta$ be the polynomial invariant associated with $\eta$ and $\chi^\zeta$ be the polynomial invariant associated with $\zeta$. Then we have the following combinatorial descriptions.

\begin{theorem}\label{poly_cg_eta}
 $\chi^\eta_I(g)(n)$ counts the number of extremal functions $f: I\rightarrow [n]$.
$\chi^\zeta_I(g)(n)$ counts the number of strictly extremal functions $f: I \rightarrow [n]$.

\end{theorem}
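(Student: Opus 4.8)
The plan is to unwind the definition of the polynomial invariant $\chi^\psi_I(g)(n)$ as a sum over weak compositions and match each term to a flag of convex sets, then translate "convex" into "extremal" using Proposition~\ref{extremal=convex}. Recall that
\[
\chi^\psi_I(g)(n) = \sum_{I = S_1 \sqcup \cdots \sqcup S_n} (\psi_{S_1}\otimes\cdots\otimes\psi_{S_n})\circ\Delta_{S_1,\dots,S_n}(g),
\]
the sum running over weak compositions of $I$ into $n$ (possibly empty) blocks. Since $\Delta$ on $\lcg$ is iterated from $\Delta_{S,T}(e) = e|_S \otimes e/_S$ when $S$ is closed (and $0$ otherwise), the iterated coproduct $\Delta_{S_1,\dots,S_n}(g)$ is nonzero precisely when each initial union $S_1\sqcup\cdots\sqcup S_j$ is convex in $g$, equivalently when the chain $\varnothing \subseteq S_1 \subseteq S_1\sqcup S_2 \subseteq \cdots \subseteq I$ is a flag of convex sets; Lemma~\ref{chainconvex} (applied repeatedly) is what lets us build and recognize such flags at each stage. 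When $\Delta_{S_1,\dots,S_n}(g)$ is nonzero it is a tensor of the successive "slices" $g$ restricted/contracted along this flag.

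For $\psi = \eta$: since $\eta_J(c) = 1$ for every closure operator $c$ on every $J$, every nonzero term contributes exactly $1$, so $\chi^\eta_I(g)(n)$ equals the number of flags $\varnothing = T_0 \subseteq T_1 \subseteq \cdots \subseteq T_n = I$ of convex sets of length $n$. Now I would set up the bijection between such flags and functions $f: I \to [n]$ by $f(x) = \min\{ j : x \in T_j\}$, equivalently $T_j = f^{-1}([j])$. A function $f$ arises this way iff each $f^{-1}([j])$ is convex, i.e.\ iff $f$ is convex in the sense defined before Lemma~\ref{lemma1eq} (using that the convex sets of the discrete/chain geometry on $[n]$ are exactly the initial segments $[j]$, which I should note explicitly, or simply observe that requiring all $f^{-1}([j])$ convex is the condition). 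By Proposition~\ref{extremal=convex} this is the same as $f$ being extremal, which gives the first claim. For $\psi = \zeta$: by Proposition~\ref{inverse_eta_bar_cg}, $\zeta_J(c) = 1$ if $c$ is discrete and $0$ otherwise, so a nonzero term of $\chi^\zeta$ requires, in addition to the flag being convex, that every slice $g_{T_{j-1}:T_j}$ be discrete. Thus $\chi^\zeta_I(g)(n)$ counts flags of convex sets all of whose successive slices are discrete, which under the same bijection are exactly the strictly convex $f$, hence by Proposition~\ref{extremal=convex} the strictly extremal $f$. This yields the second claim.

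The routine but slightly delicate step is the careful bookkeeping in identifying $\Delta_{S_1,\dots,S_n}(g)$: one must check that the $j$-th tensor factor is really $g_{T_{j-1}:T_j}$ (a restriction of a contraction, or equivalently a contraction of a restriction), so that the discreteness condition in the $\zeta$ case is being applied to the correct closure operator; Lemma~\ref{chainconvex} is exactly the compatibility statement that makes "convex" propagate correctly through iterated restriction/contraction, and I would invoke it to justify that the flag condition can be checked step by step rather than all at once. The main obstacle, such as it is, is purely notational—managing the iterated coproduct on $\lcg$ cleanly—rather than conceptual; once the dictionary "weak composition $\leftrightarrow$ flag of convex sets $\leftrightarrow$ function $f:I\to[n]$" is in place and Proposition~\ref{extremal=convex} is cited, both statements follow immediately.
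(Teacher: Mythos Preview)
Your proposal is correct and follows essentially the same approach as the paper: expand $\chi^\psi_I(g)(n)$ over weak compositions, use Lemma~\ref{chainconvex} to identify the nonzero terms as flags of convex sets, read these off as (strictly) convex functions via $f^{-1}([j]) = S_1\sqcup\cdots\sqcup S_j$, and then invoke Proposition~\ref{extremal=convex}. The paper's proof is terser on the iterated-coproduct bookkeeping and the flag--function bijection, but the argument is the same.
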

\begin{proof}
 By Lemma \ref{chainconvex}, we have the polynomial invariant associated with $\eta$ is as follows: for $g\in \overline{\bf cG}[I]$, $$\chi^\eta_I(g)(n) = \sum_{I = S_1\sqcup ...\sqcup S_n}(\eta_{S_1}\otimes ...\otimes \eta_{S_n})\circ \triangle_{S_1,...,S_n}(g). $$
The sum is over weak compositions of $I$ with $S_1\cup ...\cup S_i$ convex for all $i$. This counts the number of functions $f: I\rightarrow [n]$ such that $f^{-1}[i] = S_1\cup... \cup S_i$, which are the convex functions. By Proposition \ref{extremal=convex}, this is the same as the number of extremal functions $f: I \rightarrow [n]$. Similar arguments can be applied on $\chi^\zeta_I$.  
\end{proof}

\begin{corollary}
The number of (strictly) extremal functions $f: I \rightarrow [n]$ is a polynomial in $n$.
\end{corollary}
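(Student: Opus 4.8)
The statement is an immediate consequence of Theorem~\ref{poly_cg_eta} together with the general polynomiality of the invariant $\chi^\psi$. The plan is as follows. First, recall from Theorem~\ref{poly_cg_eta} that for a convex geometry $g$ on $I$, the number of extremal functions $f\colon I\to[n]$ equals $\chi^\eta_I(g)(n)$ and the number of strictly extremal functions equals $\chi^\zeta_I(g)(n)$. So it suffices to know that $\chi^\eta_I(g)$ and $\chi^\zeta_I(g)$ are polynomials in $n$.

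For this, I would invoke the general fact established in \cite[Proposition 3.1]{ABV} and recalled in the subsection on characters and polynomial invariants above: for any Hopf monoid $\VH$, any character $\psi$ on $\VH$, and any $x\in\VH[I]$, the map $n\mapsto \chi^\psi_I(x)(n)$ agrees with a polynomial (of degree at most $|I|$) in $n$. Applying this with $\VH = \lcg$, $x = g$, and $\psi\in\{\eta,\zeta\}$ gives that $\chi^\eta_I(g)$ and $\chi^\zeta_I(g)$ are polynomials, which combined with the enumerative identifications of Theorem~\ref{poly_cg_eta} finishes the argument.

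There is essentially no obstacle here: the entire combinatorial content has already been carried out in Theorem~\ref{poly_cg_eta}, and polynomiality is a structural feature of the universal construction. The only thing worth remarking is that, absent the Hopf-monoid machinery, one would instead have to argue polynomiality directly — for instance by showing that the chain-counting expression $\sum_{I=S_1\sqcup\cdots\sqcup S_n}(\eta_{S_1}\otimes\cdots\otimes\eta_{S_n})\circ\triangle_{S_1,\dots,S_n}(g)$, restricted to weak compositions whose partial unions are convex, counts lattice points in a way governed by an order-polynomial-type identity over the lattice of convex sets of $g$ — but this is exactly the content subsumed by \cite[Proposition 3.1]{ABV}, so no independent work is needed.
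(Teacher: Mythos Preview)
Your proposal is correct and matches the paper's own reasoning: the corollary is stated without proof immediately after Theorem~\ref{poly_cg_eta}, relying implicitly on the polynomiality of $\chi^\psi$ established in \cite[Proposition 3.1]{ABV} together with the enumerative identifications just proved.
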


By the fact that $\overline{\zeta} = \eta^{-1}$, and Proposition \ref{general_reciprocity}, we obtain the Edelman-Jamison reciprocity \cite[Theorem 4.7]{EJ}.

\begin{corollary}\label{reciprocity_eta_zeta}
    $(-1)^{|I|}\chi^\eta_I(g)(-n) = \chi^\zeta_I(g)(n)$.
\end{corollary}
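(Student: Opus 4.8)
The plan is to derive the identity purely formally, by combining Proposition~\ref{general_reciprocity} with the defining relation $\zeta=\overline{\eta}^{-1}$ and tracking how the convolution power that computes a polynomial invariant interacts with the antipode $\s$ and with the ``bar'' involution $\psi\mapsto\overline\psi$, $\overline\psi_I=(-1)^{|I|}\psi_I$, on characters. The key bookkeeping observations are as follows: writing the defining sum of $\chi^\psi_I(x)(n)$ as the $n$-fold convolution power $\psi^{*n}_I(x)$ for $n\in\mathbb{N}$, we have (i) $\chi^{\overline\psi}_I(x)(n)=(-1)^{|I|}\chi^\psi_I(x)(n)$, since in every term of $\psi^{*n}_I(x)$ the block sizes of the weak composition of $I$ sum to $|I|$ and each factor $\psi_{S_i}$ picks up a sign $(-1)^{|S_i|}$; and (ii) since $\psi\mapsto\psi\circ\s$ is inversion in the character group, $\psi^{*n}\circ\s=(\psi^{-1})^{*n}$, so $\chi^\psi_I(\s_I(x))(n)=\chi^{\psi^{-1}}_I(x)(n)$. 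Both identities hold for all $n\in\mathbb{N}$, hence as identities of polynomials (polynomiality is \cite[Proposition 3.1]{ABV}).

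With these in hand, the computation is short. By Proposition~\ref{general_reciprocity}, $\chi^\eta_I(g)(-n)=\chi^\eta_I(\s_I(g))(n)$, which by (ii) equals $\chi^{\eta^{-1}}_I(g)(n)$. From the definition $\zeta:=\overline\eta^{-1}$ and the fact that the bar operation is an involutive automorphism of the character group (so that $\overline{\psi^{-1}}=(\overline\psi)^{-1}$), one gets $\overline\zeta=\overline{\overline\eta^{-1}}=\eta^{-1}$, i.e.\ $\eta^{-1}=\overline\zeta$. Therefore $\chi^\eta_I(g)(-n)=\chi^{\overline\zeta}_I(g)(n)$, which by (i) equals $(-1)^{|I|}\chi^\zeta_I(g)(n)$; multiplying through by $(-1)^{|I|}$ gives the claim. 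Alternatively, one can first treat $n=1$ directly from the first formula of Proposition~\ref{general_reciprocity}, namely $\chi^\eta_I(g)(-1)=\eta_I(\s_I(g))=\eta^{-1}_I(g)=\overline\zeta_I(g)=(-1)^{|I|}\zeta_I(g)=(-1)^{|I|}\chi^\zeta_I(g)(1)$, and then obtain general $n$ the same way from the second formula.

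Since the whole argument is formal manipulation in the group of characters, there is no real obstacle; the only points needing a moment's care are the passage from equalities at positive integers to polynomial identities, and checking that the bar operation commutes with convolution (hence with inversion) and that $\psi\mapsto\psi\circ\s$ is genuinely the convolution inverse on characters — both standard. It is worth noting that the explicit description of $\zeta$ from Proposition~\ref{inverse_eta_bar_cg} is not logically required for the reciprocity itself; only $\zeta=\overline\eta^{-1}$ is used. That description is, however, what makes the statement combinatorially meaningful when read through Theorem~\ref{poly_cg_eta}, turning it into the Edelman--Jamison reciprocity between extremal and strictly extremal functions.
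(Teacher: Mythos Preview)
Your proof is correct and follows essentially the same approach as the paper: the paper's argument is the one-line remark ``By the fact that $\overline{\zeta} = \eta^{-1}$, and Proposition~\ref{general_reciprocity}, we obtain the Edelman--Jamison reciprocity,'' and you have simply unpacked this by making explicit the two ingredients (i) $\chi^{\overline\psi}=(-1)^{|I|}\chi^\psi$ and (ii) $\chi^\psi(\s(\cdot))=\chi^{\psi^{-1}}$. Your observation that Proposition~\ref{inverse_eta_bar_cg} is not logically needed for the reciprocity itself is also correct.
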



Now we want to describe $\varphi$ and $\varphi'$ combinatorially and compute the associated polynomial invariants.  Let $g$ be a convex geometry on the ground set $I$. A subset $S\subseteq I$ is called \emph{totally convex} in $g$ if for all $A\subseteq S$, $g(A) = A$. The set of \emph{extremal points} of $g$ is $\Ex(g) := \{x\in I \mid g(I\backslash x) = I \backslash x\} = \Ex(I)$. Let $\ex(g) = |\Ex(g)|$. It is easy to see that when $g = g_p$ for some partial order $p$, then the number of totally convex sets with respect to $g$ is $2^{\min(p)}$, and $\ex(g) = \max(p)$.

\begin{proposition}
$\varphi_I(g)$ is the number of totally convex sets in $g$. $\varphi_I'(g) = 2^{\ex(g)}$.
\end{proposition}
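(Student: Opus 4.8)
The plan is to compute the two characters $\varphi = \zeta \ast \eta$ and $\varphi' = \eta \ast \zeta$ directly from the convolution product formula, using the description of $\zeta$ from Proposition~\ref{inverse_eta_bar_cg} (namely $\zeta_I(g) = 1$ if $g$ is discrete and $0$ otherwise) together with the fact that $\eta_I \equiv 1$. For $\varphi'$, the convolution formula gives
$$
\varphi'_I(g) = (\eta \ast \zeta)_I(g) = \sum_{I = S\sqcup T,\ S \text{ convex}} \eta_S(g|_S)\,\zeta_T(g/_S) = \#\{S \subseteq I : S \text{ is convex and } g/_S \text{ is discrete}\}.
$$
I would then invoke the observation already recorded in the proof of Proposition~\ref{inverse_eta_bar_cg}: for $I = S\sqcup T$, the pair ``$S$ convex and $g/_S$ discrete'' holds if and only if $T \subseteq \Ex(I)$. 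Hence $S$ convex with $g/_S$ discrete corresponds bijectively to choosing $T$ an arbitrary subset of $\Ex(g)$, so the count is $2^{|\Ex(g)|} = 2^{\ex(g)}$. This settles the statement for $\varphi'$.

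For $\varphi$, the convolution formula gives
$$
\varphi_I(g) = (\zeta \ast \eta)_I(g) = \sum_{I = S\sqcup T,\ S \text{ convex}} \zeta_S(g|_S)\,\eta_T(g/_S) = \#\{S \subseteq I : S \text{ is convex and } g|_S \text{ is discrete}\}.
$$
Now $g|_S$ is discrete means $g(A) = A$ for every $A \subseteq S$; combined with $S$ itself being convex (which is the special case $A = S$), the condition ``$S$ convex and $g|_S$ discrete'' is exactly the condition that $S$ be \emph{totally convex} in the sense defined just before the proposition. So $\varphi_I(g)$ counts the totally convex subsets of $I$, as claimed. I should double-check the edge cases $S = \varnothing$ and $T = \varnothing$: the empty set is convex (looplessness) and discrete, and $g/_\varnothing = g$, $g|_I = g$, so both degenerate summands behave correctly, and the sanity check against $g = g_p$ recovers $\varphi_I(g_p) = 2^{\min(p)}$ and $\varphi'_I(g_p) = 2^{\max(p)}$ as stated earlier.

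The only subtlety — and the step I expect to require the most care — is verifying that the restriction and contraction operations used in the convolution product genuinely match the Hopf monoid structure on $\lcg$ inherited from $\lc$, i.e.\ that $\Delta_{S,T}(g)$ is nonzero precisely when $S$ is convex (closed), with value $g|_S \otimes g/_S$, and that $g|_S$, $g/_S$ are again (loopless) convex geometries. This is essentially bookkeeping from the definitions of $\oplus$, $e|_S$, $e/_S$ recalled earlier, but one must be slightly careful that contraction $g/_S$ defined by $(g/_S)(B) = g(S\cup B)\cap T$ is the operation appearing in the coproduct and that ``$g/_S$ discrete'' unwinds to $g(S\cup B)\cap T = B$ for all $B\subseteq T$, which is indeed what is used when identifying the condition with $T\subseteq \Ex(I)$. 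Once this is in place, both identities are immediate from the convolution sum.
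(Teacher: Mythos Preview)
Your proposal is correct and follows essentially the same approach as the paper's own proof: both expand $\varphi$ and $\varphi'$ via the convolution formula, identify the nonvanishing summands using the description of $\zeta$, and then match ``$S$ convex with $g|_S$ discrete'' to totally convex sets and ``$S$ convex with $g/_S$ discrete'' to complements of subsets of $\Ex(I)$. Your added discussion of edge cases and of the coproduct bookkeeping is sound but more detailed than what the paper records.
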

\begin{proof}
     We have that $$\varphi(g) = \sum_{S\sqcup T = I, S\text{ is convex}}\zeta(g|_S)\eta(g_S).$$
This counts the number of ordered decompositions $I=S\sqcup T$ such that $S$ is convex and $g|_S$ is discrete. For such compositions, for all $A\subseteq S$, $g|_S(A) = g(A)\cap S = g(A) = A$. Hence this is the same as counting totally convex sets in $g$. Similarly, $$\varphi_I'(g) = \sum_{S\sqcup T = I, \text{ $S$ is convex}}\eta(g|_S)\zeta(g/_S).$$ This counts the number of ordered decompositions $I = S\sqcup T$ such that $S$ is a convex set, and $g/_S$ is discrete. This is exactly the number of subsets in $\Ex(I)$, which is $2^{\ex(g)}$.
\end{proof}

For $n \in \mathbb{N}$, recall $\llbracket n\rrbracket = \{\overline{n},...,\overline{1},1,...,n\}$ together with the linear order $\overline{1} < 1 < ... < \overline{n} < n$. A function $f:I\rightarrow \llbracket n \rrbracket$ is called \emph{enriched convex} with respect to $g$ if it satisfies the following properties.
\begin{enumerate}
    \item[$(1)$] For each convex set $A\subseteq \llbracket n \rrbracket$, $f^{-1}(A)$ is convex. 
    \item[$(2)$] Denote $f^{-1}(\llbracket i \rrbracket)$ as $A_i$ for each $1\leq i \leq n$. Then $g_{A_{i-1}:A_i}$ restricted on $f^{-1}(\overline{i})$ is discrete.
\end{enumerate}

\begin{theorem}\label{poly_cg_varphi}
    Let $\chi^\varphi$ be the polynomial invariant associated with $\varphi$. Then  $\chi^\varphi_I(g)(n)$ is the number of enriched convex functions $f: I\rightarrow \llbracket n \rrbracket.$ 
\end{theorem}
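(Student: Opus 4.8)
The plan is to expand the definition of the polynomial invariant $\chi^\varphi$ via the iterated coproduct and then match the terms bijectively with enriched convex functions. Recall that $\varphi = \zeta \ast \eta$, so in the convolution product defining $\chi^\varphi_I(g)(n)$ each factor $\varphi_{S_j}$ itself unfolds into a sum over decompositions $S_j = S_j' \sqcup S_j''$ with $S_j'$ convex in $g/_{(S_1 \cup \cdots \cup S_{j-1})}$ and the contraction discrete on $S_j''$. Combining this with the requirement (coming from $\triangle_{S_1,\ldots,S_n}$ in the Hopf monoid $\lcg$) that each $S_1 \cup \cdots \cup S_i$ be convex, we obtain a sum over chains of convex sets $\varnothing = B_0 \subseteq B_1 \subseteq \cdots \subseteq B_n = I$ together with, for each $i$, a subset $C_i \subseteq B_i \setminus B_{i-1}$ on which the relevant contraction is discrete.

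Next I would set up the bijection with enriched convex functions $f\colon I \to \llbracket n \rrbracket$. Given such an $f$, put $B_i = f^{-1}(\llbracket i \rrbracket) = A_i$ and $C_i = f^{-1}(\overline{i})$, $D_i = f^{-1}(i)$; then $B_i \setminus B_{i-1} = C_i \sqcup D_i$. Condition (1) in the definition of enriched convex, combined with Lemma~\ref{chainconvex}, says exactly that each $B_i$ is convex in $g$ (equivalently each $B_i \setminus B_{i-1}$ is convex in the iterated contraction $g/_{B_{i-1}}$), since the convex sets of $\llbracket n \rrbracket$ under the order $\overline 1 < 1 < \cdots < \overline n < n$ are precisely the order ideals $\varnothing, \{\overline 1\}, \{\overline 1, 1\}, \ldots, \llbracket n \rrbracket$. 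Condition (2) says exactly that $g_{A_{i-1}:A_i}$ restricted to $C_i$ is discrete, which is the condition attached to the $\zeta$-factor in each $\varphi_{S_i}$. Conversely, from a chain-plus-subsets datum one reconstructs $f$ by declaring $f(x) = \overline i$ for $x \in C_i$ and $f(x) = i$ for $x \in (B_i \setminus B_{i-1})\setminus C_i$. I would verify that the two conditions on the datum translate back into (1) and (2), using Proposition~\ref{extremal=convex} or a direct argument, so the correspondence is a genuine bijection.

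The main technical point, and the step I expect to require the most care, is the bookkeeping in the unfolding of $\varphi = \zeta \ast \eta$ inside the iterated coproduct: one must check that the convexity conditions produced by the outer coproduct $\triangle_{S_1,\ldots,S_n}$ and the inner splitting $S_j = S_j'\sqcup S_j''$ are compatible and together amount precisely to ``$B_i$ convex for all $i$ and the contraction $g/_{B_{i-1}}$ discrete on $C_i$.'' Here Lemma~\ref{chainconvex} is the key tool: it lets one pass freely between ``$K_2$ convex in $g$'' and ``$K_2 \setminus K_1$ convex in $g/_{K_1}$'' along the chain, so that the nested contractions telescope correctly. One also needs the evident fact that the convex sets of the poset $\llbracket n \rrbracket$ are exactly the initial segments $\llbracket i \rrbracket$ and the sets $\llbracket i-1\rrbracket \cup \{\overline i\}$, which is what makes conditions (1) and (2) the natural ``enriched'' analogue of the convex/strictly-convex dichotomy appearing in Theorem~\ref{poly_cg_eta}. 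Once these identifications are in place, summing $1$ over all such data gives the claimed count, and the proof concludes.
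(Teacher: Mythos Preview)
Your approach is the same as the paper's: expand $\chi^\varphi$ via the iterated coproduct, unfold each $\varphi_{S_i} = (\zeta * \eta)_{S_i}$ on the minor $g_{A_{i-1}:A_i}$, and match the resulting data bijectively with enriched convex functions. Two points in your write-up need correction, however.

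First, in $\varphi = \zeta * \eta$ the factor $\zeta$ is applied to the \emph{restriction} $g_{A_{i-1}:A_i}|_{M}$ (where $M = S_j'$ is the first part of the inner splitting), not to a contraction on $S_j''$; the discreteness condition therefore sits on $S_j' = C_i$. Second, and more substantively, unfolding $\varphi_{S_i}$ produces a sum over those $M \subseteq S_i$ that are \emph{convex} in $g_{A_{i-1}:A_i}$ with $g_{A_{i-1}:A_i}|_M$ discrete, so your summary ``$B_i$ convex for all $i$ and \dots\ discrete on $C_i$'' omits the convexity of $C_i$ in the minor. Discreteness of the restriction alone does not force this: for the chain $a<b<c$ and $C=\{b\}$, the restriction to $C$ is discrete but $\{b\}$ is not convex. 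On the function side, this missing constraint is precisely what condition~(1) supplies for the convex set $\llbracket i-1\rrbracket \cup \{\overline i\}\subseteq \llbracket n\rrbracket$: it forces $A_{i-1}\cup C_i$ to be convex in $g$, which by Lemma~\ref{chainconvex} is equivalent to $C_i$ being convex in $g_{A_{i-1}:A_i}$. You list these intermediate order ideals among the convex sets of $\llbracket n\rrbracket$ but then extract only ``$B_i$ convex'' from condition~(1); once you also extract the convexity of $A_{i-1}\cup C_i$ and pair it with the inner-splitting constraint, your bijection is exactly the one in the paper's proof.
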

\begin{proof}
    We have $$\chi^\varphi_I(g)(n) = \sum_{I = S_1\sqcup...\sqcup S_n}\varphi_{S_1}\otimes...\otimes \varphi_{S_n} \circ \triangle_{S_1,...,S_n}(g)$$ $$ = \sum_{I = S_1\sqcup...\sqcup S_n, S_1\cup...\cup S_i \text{ convex}}\varphi_{S_1}({g_{A_0:A_1}})...\varphi_{S_n}({g_{A_{n-1}:A_n}}). $$

For $1\leq i \leq n$, each choice of $S_1\cup...\cup S_i$ corresponds to a distinct choice of $f^{-1}(\llbracket i \rrbracket)$. By definition of $\varphi$ we have $$\varphi_{S_i}({g_{A_{i-1}:A_i}}) = \sum_{S_i = M\sqcup N, M\text{ convex in }{g_{A_{i-1}:A_i}} }\zeta({g_{A_{i-1}:A_i}|_M}).$$ That is, $\varphi_{S_i}({g_{A_{i-1}:A_i}})$ is the number of convex $M\subseteq S_i$ such that $g_{A_{i-1}:A_i}|_M$ is discrete. Hence each $M$ corresponds to a distinct choice of $f^{-1}(\overline{i})$. Note that $M$ is convex in $g_{A_{i-1}:A_i}$, so we have $g(M\cup A_{i-1})\cap (A_i\backslash A_{i-1}) = M$. Since $g(A_{i-1}) = A_{i-1}\subseteq M\cup A_{i-1}\subseteq A_i = g(A_i)$, we have $M\cup A_{i-1}$, which corresponds to $f^{-1}(\llbracket i-1\rrbracket \cup \{\overline{i}\})$, is convex.
\end{proof}

\begin{corollary}
   The number of enriched convex functions $f:I \rightarrow \llbracket n \rrbracket$ with respect to $g$ is a polynomial in $n$.
\end{corollary}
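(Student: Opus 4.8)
The statement is an immediate consequence of Theorem~\ref{poly_cg_varphi} together with the general fact, recalled in Section~\ref{backgrounds}, that the polynomial invariant attached to a character of a Hopf monoid is genuinely a polynomial. Concretely, the plan is as follows. First, recall from Theorem~\ref{poly_cg_varphi} that for every $n\in\mathbb{N}$ the number of enriched convex functions $f\colon I\to\llbracket n\rrbracket$ with respect to $g$ equals $\chi^\varphi_I(g)(n)$, the value at $n$ of the polynomial invariant associated with the character $\varphi$ on the Hopf monoid $\lcg$ of loopless convex geometries. Second, invoke \cite[Proposition 3.1]{ABV}, which guarantees that for any Hopf monoid $\VH$, any character $\psi$ on it, and any $x\in\VH[I]$, the assignment $n\mapsto\chi^\psi_I(x)(n)$ is given by a polynomial in $n$. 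Applying this with $\VH=\lcg$, $\psi=\varphi$, and $x=g$ shows that $\chi^\varphi_I(g)(n)$ is a polynomial in $n$, and hence so is the enumerating function in question.

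Since everything rests on results already in hand, there is no real obstacle; the only point requiring a moment of care is to confirm that Theorem~\ref{poly_cg_varphi} delivers the identity between the counting function and $\chi^\varphi_I(g)(\cdot)$ for \emph{all} $n\in\mathbb{N}$, not merely on a cofinite set, which is indeed what its proof provides. This is entirely parallel to the corollary following Theorem~\ref{poly_cg_eta}, and is established by the same reasoning.

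If one prefers to bypass the Hopf-monoid machinery, an alternative route is a direct counting argument: expand the number of enriched convex functions as a sum over chains of convex sets $A_1\subseteq\cdots\subseteq A_n$ together with the choices of the blocks $f^{-1}(\overline{i})$ subject to the discreteness condition of Theorem~\ref{poly_cg_varphi}(2); grouping these flags by the isomorphism type of the induced chain of convex sets of $g$, each type contributes a term of the form $\binom{n}{k}$ with $k\le|I|$, and a finite sum of such terms is a polynomial in $n$. The Hopf-monoid argument is shorter and is the one I would present.
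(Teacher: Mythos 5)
Your proposal is correct and matches the paper's intended argument: the corollary is an immediate consequence of Theorem~\ref{poly_cg_varphi} combined with the fact, cited in Section~\ref{backgrounds}, that the polynomial invariant $\chi^\psi_I(x)(n)$ associated with any character is a polynomial in $n$ by \cite[Proposition 3.1]{ABV}. The paper leaves this deduction implicit, exactly as in the corollary following Theorem~\ref{poly_cg_eta}, so your first route is the right one.
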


Note if $g = g_p$ for some partial order $p$, then $\chi^\varphi_I(g)$ is the enriched order polynomial of $p$.

Billera, Hsiao and Provan \cite{BHP} introduced \emph{enriched extremal functions} as a generalization of \emph{enriched $P$-partitions} \cite{Stembridge}. That is, a function $f: I \rightarrow$ $\llbracket n\rrbracket$ is called \emph{enriched extremal} \cite[Definition 4.3]{BHP}  with respect to $g$ if \begin{enumerate}
    \item[$(1)$] For every convex set $A$ there exists $a\in \Ex(A)$ such that $f(a) = f_A$ where $f_A := \min\{f(a) \mid a\in A\}$.
    \item[$(2)$] For $a\in I$, if $f(a) < 0$, then $a\in \Ex(\{b\in I \mid f(b) \geq f(a)\})$.
\end{enumerate}

Let $\chi^{\varphi'}$ be the polynomial invariant associated with $\varphi'$. 
\begin{theorem}\label{poly_cg_varphi'}
$\chi^{\varphi'}_I(g)(n)$ counts the number of enriched extremal functions $f: I \rightarrow \llbracket n\rrbracket$.

\end{theorem}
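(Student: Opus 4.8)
The plan is to expand the polynomial invariant $\chi^{\varphi'}_I(g)(n)$ using the coproduct iterated $n$ times, just as in the proofs of Theorem~\ref{poly_cg_eta} and Theorem~\ref{poly_cg_varphi}, and then match the resulting sum term-by-term with the combinatorial count of enriched extremal functions. Concretely, I would write
\[
\chi^{\varphi'}_I(g)(n) = \sum_{I = S_1\sqcup\cdots\sqcup S_n}\bigl(\varphi'_{S_1}\otimes\cdots\otimes\varphi'_{S_n}\bigr)\circ\triangle_{S_1,\ldots,S_n}(g),
\]
where, by the definition of the coproduct on $\lcg$, the only surviving terms are those weak compositions $(S_1,\ldots,S_n)$ for which each initial union $A_i := S_1\cup\cdots\cup S_i$ is convex in $g$. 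Each such composition records a convex flag $\varnothing = A_0\subseteq A_1\subseteq\cdots\subseteq A_n = I$, i.e.\ the "unbarred part" of a prospective function $f:I\to\llbracket n\rrbracket$: set $A_i = f^{-1}(\llbracket i\rrbracket)$, so that $S_i = f^{-1}(\{\overline i, i\})$.

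Next I would unpack each factor $\varphi'_{S_i}(g_{A_{i-1}:A_i})$. Since $\varphi' = \eta\ast\zeta$, we have $\varphi'_{S_i}(g_{A_{i-1}:A_i}) = \sum \zeta_{N}\bigl((g_{A_{i-1}:A_i})/_M\bigr)$, the sum over decompositions $S_i = M\sqcup N$ with $M$ convex in $g_{A_{i-1}:A_i}$ and the contraction through $M$ discrete; equivalently, by Proposition~\ref{inverse_eta_bar_cg}, $N$ ranges over subsets of the extreme points $\Ex$ of the convex geometry $g_{A_{i-1}:A_i}$ relative to the convex set $M$. So $\varphi'_{S_i}(g_{A_{i-1}:A_i}) = 2^{|\Ex(g_{A_{i-1}:A_i})|}$ once $M = S_i$ is forced... more carefully, summing over $M$ first, the total is $2^{\ex(g_{A_{i-1}:A_i})}$, and the choice of $N\subseteq\Ex$ within the last block is exactly the choice of which elements of $S_i$ get the barred label $\overline i$ versus the unbarred label $i$. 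The claim to verify, using Lemma~\ref{chainconvex} and Lemma~\ref{lemma1eq}/\ref{lemma2eq}, is that this data — the convex flag $(A_i)$ together with, for each $i$, a set $N_i\subseteq S_i$ of barred elements lying in the extreme points of $g_{A_{i-1}:A_i}$ — is in bijection with enriched extremal functions $f:I\to\llbracket n\rrbracket$ in the sense of \cite[Definition 4.3]{BHP}. The forward direction of condition (1) (every convex $A$ has an extreme point achieving $f_A$) should follow from the convexity of the $A_i$ by the same contrapositive argument as in the "$\Leftarrow$" direction of Proposition~\ref{extremal=convex} (if no extreme point of $A$ attained the minimum then $A\subseteq g(\Ex A)$ would be pushed into a lower level of the flag); condition (2) (barred values are extreme in their up-set) is precisely the requirement $N_i\subseteq\Ex(g_{A_{i-1}:A_i})$ translated through the contraction, using that $\{b : f(b)\geq f(a)\}$ for $f(a) = \overline i$ is $I\setminus A_{i-1}$ restricted appropriately.

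The main obstacle I anticipate is the careful translation between the "contraction-relative" notion of extreme point appearing in $\varphi'_{S_i}(g_{A_{i-1}:A_i})$ and the "global" notion $\Ex(\{b\in I : f(b)\geq f(a)\})$ used in condition (2) of the definition of enriched extremal; this requires showing that $a\in\Ex\bigl((g_{A_{i-1}:A_i})\big|\text{(appropriate set)}\bigr)$ is equivalent to $a\in\Ex(\{b : f(b)\geq f(a)\})$, which should come down to a compatibility of restriction and contraction of convex geometries — something in the spirit of Lemma~\ref{chainconvex} but applied one level at a time, and possibly needing the anti-exchange axiom rather than just the closure axioms. Once that compatibility is pinned down, the rest is a routine bookkeeping of the multiplicative factors $2^{|N_i|}$ matching the number of barred-label choices, exactly parallel to the proof of Theorem~\ref{poly_cg_varphi}, and the conclusion that $\chi^{\varphi'}_I(g)(n)$ counts enriched extremal functions follows. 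I would close by remarking that when $g = g_p$ this recovers the Billera–Hsiao–Provan polynomial and hence, via Proposition~\ref{self_reciprocity}, the reciprocity in Corollary~\ref{reciprocity_cg_varphi'}.
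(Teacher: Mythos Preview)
Your overall strategy is exactly the paper's: expand $\chi^{\varphi'}_I(g)(n)$ along the iterated coproduct, obtain a sum over convex flags $\varnothing=A_0\subseteq\cdots\subseteq A_n=I$ weighted by $\prod_i 2^{\ex(g_{A_{i-1}:A_i})}$, and then set up a bijection between such weighted flags and enriched extremal functions $f:I\to\llbracket n\rrbracket$. The paper does precisely this, reformulating the Billera--Hsiao--Provan conditions (1),(2) as equivalent conditions $(1)',(2)'$ phrased directly in terms of the flag and the minors $g_{A_{i-1}:A_i}$, and then proving the equivalence in four short implications.

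However, there is a genuine error in your proposed correspondence. You set $A_i=f^{-1}(\llbracket i\rrbracket)$, i.e.\ you identify the convex flag with preimages of \emph{bottom} intervals of $\llbracket n\rrbracket$. But the BHP definition of enriched extremal uses the \emph{minimum} $f_A=\min_{a\in A}f(a)$, not the maximum, so for such $f$ it is the preimages of \emph{top} intervals that are forced to be convex. Concretely, take $g$ the three-collinear-points geometry on $\{x,y,z\}$ and $f(x)=f(z)=1$, $f(y)=2$: this $f$ is enriched extremal, yet $f^{-1}(\llbracket 1\rrbracket)=\{x,z\}$ is not convex, so it cannot arise from your dictionary. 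The paper instead sets $A_{i+1}=f^{-1}(\{n,\overline n,\dots,n-i,\overline{n-i}\})$, the preimage of the top $i{+}1$ value-pairs. With this corrected indexing your sketched contrapositive for condition~(1) goes through (all extreme points of $A$ lie in a \emph{convex} upper level, hence so does $A=g(\Ex A)$), whereas with your indexing the relevant set is a complement of a convex set and the argument breaks. The ``compatibility'' you flag as the main obstacle is then exactly the content of the paper's Lemma~\ref{all convex} together with the short computations showing $(1)'+(2)'\Leftrightarrow(1)+(2)$; once the direction of the flag is fixed, no further use of anti-exchange beyond what is already encoded in Lemmas~\ref{lemma1eq} and~\ref{lemma2eq} is needed.
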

\begin{proof}
     We recall the notion of \emph{minors} on convex geometries, which comes from the iterated coproduct. For an ordered weak composition $I = S_1\sqcup S_2\sqcup ...\sqcup S_n$ with $S_1\sqcup ... S_i$ convex for each $i$, let $A_i$ denote $S_1 \sqcup ... \sqcup S_i$ and set $A_0 = \varnothing$. We have that $$\triangle_{S_1,..., S_n}(g) = {g_{A_0:A_1}\otimes g_{A_1:A_2}\otimes ...\otimes g_{A_{n-1}:A_n}}. $$

Hence the polynomial invariant associated with $\varphi'$ is as follows: $$\chi^{\varphi'}_I(g)(n) = \sum_{I = S_1\sqcup...\sqcup S_n} \varphi'_{S_1}\otimes ...\otimes \varphi'_{S_n}\circ \triangle_{S_1,...,S_n} (g) $$ $$=\sum_{I = S_1\sqcup... \sqcup S_n, A_i = S_1\sqcup ... \sqcup S_i \text{ is convex}}\varphi'_{S_1}({g_{A_0: A_1}}) ... \varphi'_{S_n}({g_{A_{n-1}:A_n}})$$ $$= \sum_{I = S_1\sqcup... \sqcup S_n, A_i = S_1\sqcup ... \sqcup S_i \text{ is convex}} 2^{\ex(g_{A_0:A_1})}...  2^{\ex(g_{A_{n-1}:A_n})}. $$ This counts the number of functions satisfying certain properties. Before introducing these properties, we define $\overline{[n-i]} \subseteq \pm[n]$ as the set $\{n,\overline{n}, n-1, \overline{n-1},...,n-i, \overline{n-i}\}$, preserving the order on $\llbracket i\rrbracket$. The functions counted by $\chi^{\varphi'}_I(g)(n)$ are those $f: I \rightarrow \llbracket n\rrbracket$ such that \begin{itemize}
    \item[(1)'] $f^{-1}(\overline{[n-i]})$ is convex for all $0\leq i \leq n-1$, and specifically $f^{-1}(\overline{[n-i]}) = S_1 \sqcup ...\sqcup S_{i+1} = A_{i+1}$.
    \item[(2)'] for each $i$, $f^{-1}(-(n-i)) \in \{a\in S_{i+1}:S_{i+1}\backslash\{a\} \text{ is convex in }g_{A_i:A_{i+1}}\}$.
\end{itemize}

We claim that such functions $f$ are equivalent to the enriched extremal functions, i.e., (1)'+(2)' $\Leftrightarrow (1)+(2)$ where $(1), (2)$ are the labels appeared in definition of enriched extremal functions. We first show the following lemma.

\begin{lemma}\label{all convex}
For each $0\leq i\leq n-1$, $f^{-1}(\overline{[n-i+1]}\cup n-i)$ is convex.
\end{lemma}

By (2)', $S_{i+1}\backslash f^{-1}(\overline{n-i})$ is convex in $g_{A_i:A_{i+1}}$. That is, $$g(A_i\cup S_{i+1}\backslash f^{-1}(\overline{n-i}))\cap S_{i+1} = S_{i+1}\backslash f^{-1}(\overline{n-i}). $$ Since $A_i \subseteq g(A_i\cup S_{i+1}\backslash f^{-1}(\overline{n-i})) \subseteq g(A_{i+1}) = A_{i+1}$, we have $g(A_i\cup S_{i+1} \backslash f^{-1}(\overline{n-i})) = A_i \cup S_{i+1}\backslash f^{-1}(\overline{n-i})$. Hence $A_i \cup S_{i+1}\backslash f^{-1}(\overline{n-i}) =f^{-1}( \overline{[n-i+1]}\cup n-i$) is convex. So the lemma holds. In other words, for any $x\in \llbracket n\rrbracket$, $f^{-1}(\{n,\overline{n}, n-1, \overline{n-1},...,x\})$ is convex.

(1)'+(2)' $\Rightarrow (1)$: suppose $1$ is false, that is, for some convex set $A\subseteq I$, $\Ex(A) \cap \{a\in A: f(a) = f_A\} = \varnothing$. Let $x$ be the second smallest elements in ${f(a): a\in A}$, let $S_x$ denote the set $\{n, \overline{n},...,x\}$ where elements are listed in decreasing order from $n$ to $x$. Then $\Ex(A)\subseteq f^{-1}(S_x)$, which is convex by Lemma \ref{all convex}. Then $\overline{\Ex(A)} \subseteq \overline{f^{-1}(S_x)} = f^{-1}(S_x)$. Since $A$ is convex, by Lemma \ref{lemma2eq}, $A = \overline{\Ex(A)}$, hence $A\subseteq f^{-1}(S_x)$, contradiction. 

(1)'+(2)' $\Rightarrow (2)$: let $a\in A$ such that $f(a) = \overline{n-i}$ for some $0\leq i \leq n-1$, i.e., $a\in S_{i+1}$. By (2)', we have $$g_{A_i:A_{i+1}}(S_{i+1}\backslash \{a\}) = g(S_{i+1}\backslash\{a\}\cup A_i)\cap S_{i+1} = S_{i+1}\backslash \{a\}. $$

Observe that $A_i \subseteq g(S_{i+1}\backslash\{a\}\cup A_i) \subseteq A_{i+1}$, then we have $g(S_{i+1}\backslash\{a\}\cup A_i) = g(A_{i+1}\backslash\{a\})= A_{i+1}\backslash\{a\}$. Hence $a\in \Ex(A_{i+1}) = \Ex\{b\in I: f(b) \geq f(a)\}$ as desired.

(1)+(2) $\Rightarrow (1)'$: the proof is essentially the same as that of Proposition \ref{extremal=convex} by replacing $f^A$ to $f_A$. 

(1)+(2) $\Rightarrow (2)'$: fix any $0\leq i \leq n-1$. Let $a\in f^{-1}(\overline{n-i})$. By (2), $a\in \Ex\{b\in I: f(b) \geq f(a)\} =\Ex(A_{i+1})$. Hence $g_{A_i:A_{i+1}}(S_{i+1}\backslash\{a\}) = g(A_{i+1}\backslash \{a\})\cap A_i$ = $(A_{i+1}\backslash \{a\})\backslash A_i  = S_{i+1}\backslash \{a\}$, and we can deduce (2)'.
\end{proof}

\begin{corollary}\cite{BHP}
The number of enriched extremal functions $f: I \rightarrow \llbracket n \rrbracket$ with respect to $g$ is a polynomial in $n$.
\end{corollary}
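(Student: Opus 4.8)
The plan is to deduce this immediately from Theorem~\ref{poly_cg_varphi'} together with the general fact, recorded in the preliminaries, that polynomial invariants of characters are genuinely polynomial. First I would recall that for any Hopf monoid $\VH$ with a character $\psi$, the associated invariant $\chi^\psi_I(x)(n)$, defined as the sum over weak compositions $I = S_1 \sqcup \cdots \sqcup S_n$ of $(\psi_{S_1} \otimes \cdots \otimes \psi_{S_n}) \circ \Delta_{S_1,\dots,S_n}(x)$, agrees with a polynomial function of $n$; this is \cite[Proposition~3.1]{ABV}, quoted in Section~\ref{backgrounds}. Applying this with $\VH = \lcg$ (equivalently $\klcgd$, via the coproduct structure already in play), $\psi = \varphi'$, and $x = g$ shows that $n \mapsto \chi^{\varphi'}_I(g)(n)$ is a polynomial.

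Next I would invoke Theorem~\ref{poly_cg_varphi'}, which identifies $\chi^{\varphi'}_I(g)(n)$ with the number of enriched extremal functions $f : I \to \llbracket n \rrbracket$ with respect to $g$. Combining the two statements gives that this count is a polynomial in $n$, which is exactly the claim. No genuinely new argument is required: the only input beyond the general Hopf-monoidal machinery is the combinatorial interpretation of $\varphi'$ and its iterated coproduct, which was established in the proof of Theorem~\ref{poly_cg_varphi'}.

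There is essentially no obstacle here — the corollary is a packaging of Theorem~\ref{poly_cg_varphi'} in the language of counting functions, and the one subtlety worth a sentence is simply to make sure the reader sees that the polynomiality comes for free from the universal construction of $\chi^{\psi}$ rather than from any enumerative analysis of enriched extremal functions themselves. I would phrase the write-up in one or two sentences: by Theorem~\ref{poly_cg_varphi'} the desired count equals $\chi^{\varphi'}_I(g)(n)$, and by \cite[Proposition~3.1]{ABV} this is a polynomial in $n$.
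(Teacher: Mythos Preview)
Your proposal is correct and matches the paper's intended argument: the corollary is stated without proof immediately after Theorem~\ref{poly_cg_varphi'}, and follows exactly as you say from that theorem together with the general polynomiality of $\chi^\psi$ recorded in Section~\ref{backgrounds} via \cite[Proposition~3.1]{ABV}.
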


\begin{example}
Let $g$ be the loopless convex geometry on the set of three colinear points $I = \{x,y,z\}$ with convex sets $\varnothing$, $\{x\}$, $\{y\}$, $\{z\}$, $\{x,y\}$, $\{y,z\}$, $\{x,y,z\}$. We omit the brackets in the computation below. Then when $n = 1$, $$\chi'_I(g)(1) = 2^{\ex(g)} = 2^2 =4. $$ This corresponds to the four enriched extremal functions from $I$ to $\llbracket 1\rrbracket$ which are (1,1,1), (-1,1,1), (1,1,-1), (-1,1,-1).

When $n = 2$, $$\chi'_I(g)(2) = _{x\sqcup yz} 2\cdot 2 + _{y\sqcup xz} 2\cdot2^2 + _{z\sqcup xy}2\cdot 2 + _{xy\sqcup z}2^2\cdot2 + _{yz\sqcup x}2^2\cdot 2+ _{xyz\sqcup \emptyset}  2^2  \cdot 2^0$$ $\hspace{9mm}$
$+ _{\emptyset \sqcup xyz}2^0 \cdot 2^2  =40.$

\begin{itemize}

    \item[] $x\sqcup yz:$ (2,1,1), (2,1,-1), (-2,1,1), (-2,1,-1),
    \item[] $y\sqcup xz$: (1,2,1), (1,2,-1), (-1,2,1), (-1,2,-1), (1,-2,1), (1,-2,-1), (-1,-2,1), (-1,-2,-1),
    \item[] $z\sqcup xy$: (1,1,2), (-1,1,2), (1,1,-2), (-1,1,-2),
    \item[] $xy\sqcup z$: (2,2,1), (2,-2,1), (-2,2,1), (-2,-2,1), (2,2,-1), (2,-2,-1), (-2,2,-1), (-2,-2,-1),
    \item[] $yz \sqcup x$: (1,2,2), (1,-2,2), (1,2,-2), (1,-2,-2), (-1,2,2), (-1,-2,2), (-1,2,-2), (-1,-2,-2),
    \item[] $xyz\sqcup \varnothing$: (1,1,1), (-1,1,1), (1,1,-1), (-1,1,-1),
    \item[] $\varnothing \sqcup xyz$: (2,2,2), (-2,2,2), (2,2,-2), (-2,2,-2).
\end{itemize}

These are all enriched extremal functions $I \rightarrow \llbracket2\rrbracket$.
\end{example}

Since $\varphi$ and $\varphi'$ are odd, by Proposition \ref{self_reciprocity}, we obtain Billera-Hsiao-Provan reciprocity (in $\chi^{\varphi'}$ case) and obtain another reciprocity result for $\chi^\varphi$.

\begin{corollary}
    \begin{itemize}
    \item[]
        \item[(1)] $\chi^{\varphi'}_I(g)(-n) = (-1)^{|{I}|}\chi^{\varphi'}_I(g)(n).$  \textnormal{\cite[pp.16]{BHP} \label{reciprocity_cg_varphi'}}
        \item[(2)] $\chi^{\varphi}_I(g)(-n) = (-1)^{|I|}\chi^{\varphi}_I(g)(n).$
    \end{itemize}
\end{corollary}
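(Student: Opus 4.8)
The plan is to deduce both identities directly from Proposition~\ref{self_reciprocity}, so the whole content of the proof is to verify that $\varphi$ and $\varphi'$ are \emph{odd} characters on the Hopf monoid $\lcg$ of loopless convex geometries (equivalently on $\lc$, since by~(\ref{injections}) the four characters $\eta,\zeta,\varphi,\varphi'$ are defined compatibly on $\po$, $\lcg$, and $\lc$). Recall that a character $\psi$ on a connected Hopf monoid is odd precisely when $\overline{\psi}$, defined by $\overline{\psi}_I = (-1)^{|I|}\psi_I$, equals the convolution inverse $\psi^{-1}$ in the character group.

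First I would record the elementary fact that $\psi \mapsto \overline{\psi}$ is an involutive automorphism of the character group. Involutivity is clear. Multiplicativity follows from the convolution formula: for characters $\psi,\phi$ and $x\in\lcg[I]$, writing $I = S\sqcup T$ and using $(-1)^{|I|} = (-1)^{|S|}(-1)^{|T|}$, we have
$$\overline{\psi*\phi}_I(x) = (-1)^{|I|}\sum_{S\sqcup T = I}\psi_S(x|_S)\,\phi_T(x/_S) = \sum_{S\sqcup T = I}\overline{\psi}_S(x|_S)\,\overline{\phi}_T(x/_S) = (\overline{\psi}*\overline{\phi})_I(x).$$
Since the bar map fixes the unit character, it commutes with inversion: $\overline{\psi^{-1}} = (\overline{\psi})^{-1}$ for every character $\psi$.

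Next I would compute with the definitions $\zeta := \overline{\eta}^{-1}$, $\varphi := \zeta*\eta$, and $\varphi' := \eta*\zeta$. Applying the bar map to $\zeta = \overline{\eta}^{-1}$ gives $\overline{\zeta} = (\overline{\overline{\eta}})^{-1} = \eta^{-1}$, and inverting gives $\zeta^{-1} = \overline{\eta}$. Hence
$$\overline{\varphi} = \overline{\zeta}*\overline{\eta} = \eta^{-1}*\zeta^{-1} = (\zeta*\eta)^{-1} = \varphi^{-1},$$
and symmetrically $\overline{\varphi'} = \overline{\eta}*\overline{\zeta} = \zeta^{-1}*\eta^{-1} = (\eta*\zeta)^{-1} = (\varphi')^{-1}$. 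So $\varphi$ and $\varphi'$ are odd, and Proposition~\ref{self_reciprocity} applied to each of them immediately yields $\chi^{\varphi'}_I(g)(-n) = (-1)^{|I|}\chi^{\varphi'}_I(g)(n)$ and $\chi^{\varphi}_I(g)(-n) = (-1)^{|I|}\chi^{\varphi}_I(g)(n)$.

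I do not expect a real obstacle here: the argument is a purely formal computation inside the character group, identical to the one for partial orders in~\cite{ABV}. The only points needing a moment's attention are that $\eta$ (hence $\zeta,\varphi,\varphi'$) genuinely is a character of the connected Hopf monoid $\lcg$ --- clear since $\eta_I(g) = 1$ on all basis elements and $\lcg$ embeds in $\lc$ via~(\ref{injections}) --- and that the characters of a connected Hopf monoid form a group, so the one-sided identities above already give two-sided inverses. Alternatively one could try to establish oddness combinatorially from the enumerative descriptions in Theorems~\ref{poly_cg_varphi} and~\ref{poly_cg_varphi'}, but the algebraic route is shorter and matches the treatment in~\cite{ABV}.
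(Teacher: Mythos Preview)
Your proposal is correct and follows exactly the paper's approach: the paper simply states that $\varphi$ and $\varphi'$ are odd and invokes Proposition~\ref{self_reciprocity}. You supply the verification of oddness via the bar-automorphism computation, which the paper leaves implicit (it is the standard argument from~\cite{ABV}), so your write-up is actually more complete than the paper's one-line justification.
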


\subsection{Enumeration on quasisymmetric invariants}\label{enumeration_quasi_inv}

In this section, we discuss the combinatorial and geometric representations of the quasisymmetric invariants (flag $f$-vectors) associated with $\eta$, $\zeta$, and $\varphi'$. For $\varphi$, we raise a question at the end of the section.

Let $g$ be a convex geometry on ground set $I$. We have  a lattice $L_g$ with elements the convex sets in $g$. This lattice is meet-distributive \cite[Theorem 3.3]{Edelman} and is a sub semi-lattice of the Boolean poset of $I$.
Let $V_g$ be the order complex of the proper part of $L_g$. This is a subcomplex of the braid arrangement of $I$. Specifically, faces of $V_g$ correspond to flags in $L_g$.

\begin{example}
Let $g$ be the convex geometry on the set of three colinear points $\{x,y,z\}$. Figure \ref{delta(g)} illustrates a geometric presentation of $g$, the lattice $L_g$ with elements the convex sets, and the order complex $V_g$ (in orange) as a subcomplex in the braid arrangement on $\{x,y,z\}$.

\begin{figure}[h!]
\begin{center}
\includegraphics[width=0.8\textwidth]{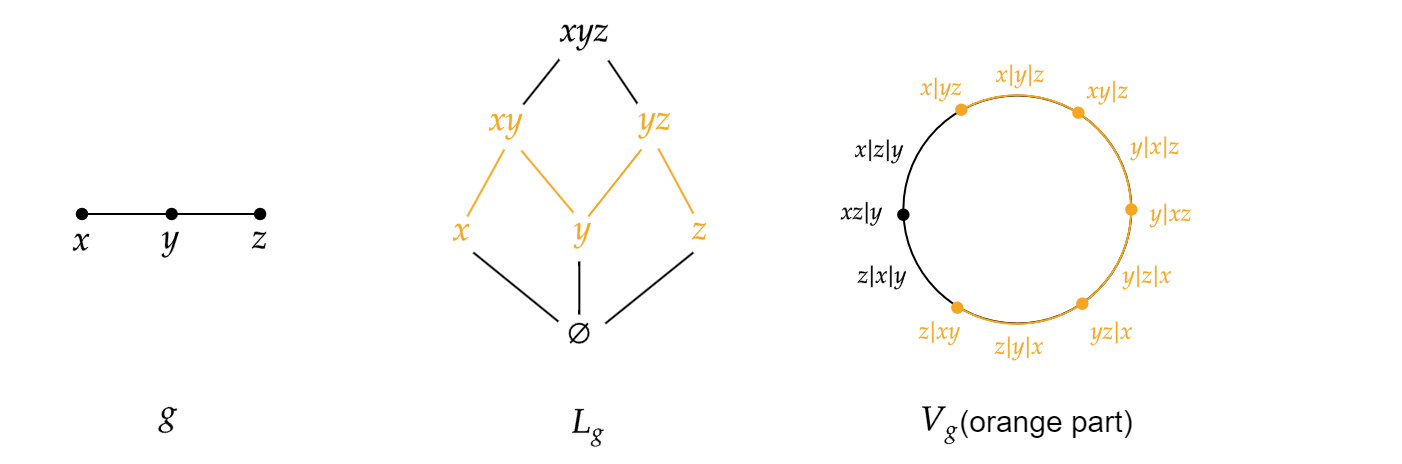}

\caption{$g, L_g, V_g$ of the convex geometry on three colinear points}\label{delta(g)}
\end{center}
\end{figure}
\end{example}

Let $\interior(V_g)$ denote the interior of $V_g$.

\begin{proposition}\label{interior_of_V_g}
Let $g$ be a convex geometry and $F$ be a composition on $I$. Then the following statements are equivalent.
\begin{itemize}
    \item[$(1)$] $F\in \interior(V_g)$.
    \item[$(2)$] $F = (F_1,F_2,...,F_k)$ satisfies $A_i :=F_1\cup...\cup F_i$ is convex for each $0\leq i\leq k$ and $g_{A_i:A_{i+1}}$ is discrete for all $0 \leq i \leq k-1$ with convention $A_0 = \varnothing$.
\end{itemize}
\end{proposition}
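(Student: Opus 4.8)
The plan is to unwind the definition of the interior of a subcomplex of the braid arrangement and match it term-by-term with the local discreteness conditions on the minors of $g$. Recall that $V_g$ is the order complex of the proper part of $L_g$, realized inside $\Sigma[\BH_I]$: its faces are exactly the flags of convex sets $\varnothing \subsetneq C_1 \subsetneq \cdots \subsetneq C_{k-1} \subsetneq I$, equivalently the set compositions $F = (F_1,\dots,F_k)$ whose partial unions $A_i = F_1 \cup \cdots \cup F_i$ are all convex. A face $F$ lies in the interior $\interior(V_g)$ precisely when every face of $\Sigma[\BH_I]$ of which $F$ is a proper coface — i.e., every set composition refining $F$ by splitting one block into two — fails to lie in $V_g$; said concretely, $F \in \interior(V_g)$ iff $F \in V_g$ but no refinement $F' = (F_1,\dots,F_{i-1}, F_i', F_i'', F_{i+1},\dots,F_k)$ of $F$ (with $F_i = F_i' \sqcup F_i''$, both nonempty) lies in $V_g$. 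So the statement reduces to: given that $F \in V_g$ (all $A_i$ convex), no such refinement is in $V_g$ if and only if each $g_{A_i:A_{i+1}}$ is discrete.

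First I would handle $(2)\Rightarrow(1)$. Assume every $A_i$ is convex and every minor $g_{A_i:A_{i+1}}$ is discrete; then $F \in V_g$ already. Suppose some refinement $F'$ splitting $F_{i+1}$ as $F_i' \sqcup F_i''$ lies in $V_g$. Then in particular $A_i \cup F_i'$ is convex. By Lemma~\ref{chainconvex} applied to $A_i \subseteq A_i \cup F_i' \subseteq A_{i+1}$, this forces $F_i'$ to be convex in $g_{A_i:A_{i+1}}$ — but $F_i'$ is a proper nonempty subset of the ground set of $g_{A_i:A_{i+1}}$, contradicting discreteness only if we know discreteness implies no proper subset is "wrongly closed"; in fact discreteness says \emph{every} subset is closed, so this is not yet a contradiction. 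The right reading of "discrete" here is that $g_{A_i:A_{i+1}}$ has \emph{no proper closed sets other than those forced}; more precisely, the relevant obstruction is that a discrete minor admits no intermediate flag, so I should instead argue: if $g_{A_i:A_{i+1}}$ is discrete then the interval $[A_i, A_{i+1}]$ in $L_g$ is a Boolean lattice, hence any chain $A_i \subsetneq C \subsetneq A_{i+1}$ would need $F_i', F_i''$ to be a proper bipartition with both parts closed in the minor — which is allowed — so the genuine point is the \emph{interior} condition of the order complex of $L_g$: $F$ is interior iff the flag it corresponds to is a \emph{maximal} flag \emph{within each rank interval it already separates}, i.e., the covering structure of $L_g$ between consecutive $A_i$ is trivial (rank jumps by one) exactly when the minor is discrete of rank one...

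The cleaner approach, which I would actually carry out, is to invoke the standard fact (e.g.\ from \cite{MS}) that for a subcomplex $V$ of $\Sigma[\BH_I]$ arising as the order complex of a meet-sub-semilattice $L \subseteq 2^I$, a face $F \leftrightarrow (\varnothing = A_0 \subsetneq A_1 \subsetneq \cdots \subsetneq A_k = I)$ is in $\interior(V)$ iff for each $i$ there is \emph{no} element of $L$ strictly between $A_i$ and $A_{i+1}$ — equivalently each $[A_i, A_{i+1}]$ is a covering relation in $L_g$. I would then translate "$[A_i, A_{i+1}]$ is a covering relation in $L_g$" into the minor language: by Lemma~\ref{chainconvex}, the convex sets $C$ with $A_i \subseteq C \subseteq A_{i+1}$ correspond bijectively to the convex sets of $g_{A_i:A_{i+1}}$; so no strictly intermediate convex set exists iff $g_{A_i:A_{i+1}}$ has only $\varnothing$ and its whole ground set as convex sets, and since $g_{A_i:A_{i+1}}$ is loopless this is exactly the condition that it be discrete on a one-element ground set — no, $A_{i+1}\setminus A_i$ may be large; "discrete" must mean here that \emph{all} subsets are convex, and a covering relation forces $A_{i+1} = A_i \cup \{x\}$, so in fact the two reformulations agree: the minor is discrete on a singleton. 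I would reconcile this with the proposition's phrasing by noting that for a singleton ground set discreteness is automatic, so condition $(2)$ as stated is equivalent to "$|F_i| = 1$ and $A_i$ convex for all $i$", i.e., $F$ is a maximal flag of convex sets, which is precisely the interior.

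\textbf{Main obstacle.} The crux is pinning down the correct notion of "interior of $V_g$" and proving the order-complex interior lemma: that a flag is interior iff it cannot be refined within $L_g$, i.e., consists of covering steps. This is where I expect to spend the effort — either citing the precise statement from \cite{MS} about interiors of subcomplexes of the Coxeter complex, or giving a short direct argument that a set composition $F$ lies in the topological interior of $|V_g|$ iff every elementary refinement of $F$ fails to be a flag of convex sets, and then combining this with Lemma~\ref{chainconvex} and Lemma~\ref{lemma2eq} to pass between "no intermediate convex set" and "the minor $g_{A_i:A_{i+1}}$ is discrete." Once that bridge is in place, both implications are short bookkeeping with the minors.
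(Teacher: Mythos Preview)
Your proposal rests on a mistaken definition of $\interior(V_g)$, and this sends the whole argument in the wrong direction. You write that $F$ is interior when every elementary refinement of $F$ \emph{fails} to lie in $V_g$, i.e.\ that $F$ corresponds to a maximal flag in $L_g$ (a chain of covering relations). The opposite is true: in the paper's usage (and the standard one for a subcomplex of the Coxeter complex), $F\in\interior(V_g)$ means $\starr(F)\subseteq V_g$, i.e.\ \emph{every} refinement of $F$ is again a flag of convex sets. The interior faces are the ones whose entire star stays inside $V_g$, not the ones that cannot be further refined inside $V_g$.

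With the correct definition the picture changes completely. Requiring that every refinement of $F$ lie in $V_g$ says precisely that for each $i$ and each $\varnothing\subsetneq S\subsetneq F_{i+1}$, the set $A_i\cup S$ is convex; by Lemma~\ref{chainconvex} this is exactly the statement that every subset of $F_{i+1}$ is closed in $g_{A_i:A_{i+1}}$, i.e.\ that this minor is discrete. There is no constraint forcing $|F_i|=1$; the blocks $F_i$ can be arbitrarily large, and ``discrete'' genuinely means ``all subsets are closed,'' not ``ground set is a singleton.'' Your own hesitation in the $(2)\Rightarrow(1)$ paragraph --- where you note that discreteness makes $F_i'$ closed and hence gives \emph{no} contradiction --- was the correct instinct; the resolution is not to reinterpret ``discrete'' but to reverse your reading of ``interior.'' Once you fix the definition, both implications become the short bookkeeping with Lemma~\ref{chainconvex} that the paper carries out, and the ``main obstacle'' you identify disappears.
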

\begin{proof}
    Assume $(1)$. That is, $\starr(F)\subseteq V_g$. That is, every extension of $F$ corresponds to a flag of convex sets in $g$. First we have that $F$ itself corresponds to a flag of convex sets in $g$. Also, it follows that for each $0\leq i\leq k-1$, any set $S\subseteq A_i\backslash A_{i-1}$, we have the composition $F' = (F_1,...,F_{i-1}, S, F_i\backslash S, F_{i+1},...,F_k)$ is a flag of convex sets. Hence $g_{A_{i-1}:A_{i}}(S) = g(S\sqcup A_{i-1})\cap(A_i\backslash A_{i-1}) = (S\cup A_{i-1})\cap(A_i\backslash A_{i-1})= S$. So $g_{A_{i-1}:A_i}$ is discrete. We obtain $(2)$.

Now assume $(2)$. Let $G = (G_1,...,G_l)$ be a composition such that $F\leq G$. Then for each $1\leq i \leq l$, we can write $G_1\sqcup...\sqcup G_i = F_1\sqcup...\sqcup F_j \sqcup R$ for some $1\leq j\leq k$ and $R\subseteq F_{j+1}$. Then by the second condition in $(2)$, $g_{A_j:A_{j+1}}(R) = g(R\sqcup A_j)\cap (A_{j+1}\backslash A_j) = R$. Since $A_j\subseteq g(R\cup A_j) \subseteq g(A_{j+1}) = A_{j+1}$, we have $R\sqcup A_j = G_1\sqcup...\sqcup G_i$ is convex. That is, $G$ corresponds to a flag of convex sets. Hence we obtain that $\starr(F)\subseteq V_g$, so $F\in \interior(V_g)$. 
\end{proof}

\begin{remark}
We can extend the above statement to loopless closure operators since we only assume that $g$ is a closure operator in the proof.
\end{remark}

\begin{corollary}

\begin{itemize}
    \item[]
    \item $f^\eta_I(g) = \sum_{F\vDash I}a_\eta(g,F)\BM_F$ with $a_\eta(g,F) = \begin{cases}
1 & \text{if }F\in V_g, \\ 0 & \text{otherwise.}    \end{cases}$
    \item $f^\zeta_I(g) = \sum_{F\vDash I}a_\zeta(g,F)\BM_F $ with $a_\zeta(g,F) = \begin{cases}1 & \text{if $F\in \interior(V_g)$,} \\ 0 & \text{otherwise.} \end{cases}$
\end{itemize}
\end{corollary}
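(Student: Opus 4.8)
The plan is to unwind the definition of the quasisymmetric invariant $f^\psi_I(x) = \sum_{F \vDash I} \psi_F\Delta_F(x)\,\BM_F$ from Theorem~\ref{universal_Hopf_monoid} in the two cases $\psi = \eta$ and $\psi = \zeta$, and in each case identify the scalar coefficient $\psi_F\Delta_F(g)$ with the indicator of a geometric condition on the composition $F$. Throughout, recall that for a composition $F = (F_1,\dots,F_k) \vDash I$, writing $A_i = F_1 \cup \cdots \cup F_i$, the iterated coproduct is $\Delta_F(g) = g_{A_0:A_1}\otimes g_{A_1:A_2}\otimes\cdots\otimes g_{A_{k-1}:A_k}$ whenever every $A_i$ is convex in $g$, and is $0$ otherwise (this is exactly the minors description used in the proof of Theorem~\ref{poly_cg_varphi'}, iterating the coproduct of $\lcg$).

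First I would handle $\eta$. Since $\eta_J(h) = 1$ for every basis element $h \in \lcg[J]$, multiplicativity of the character $\eta_F$ on a tensor of basis elements gives $\eta_F\Delta_F(g) = 1$ precisely when $\Delta_F(g) \neq 0$, i.e.\ when each $A_i$ is convex, and $0$ otherwise. But a composition $F$ with every initial union $A_i$ convex is, by definition, a flag of convex sets in $L_g$, hence a face of the order complex $V_g$; conversely every face of $V_g$ arises this way. Thus $a_\eta(g,F) = 1$ iff $F \in V_g$, which is the first bullet.

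Next I would handle $\zeta$. Again by multiplicativity, $\zeta_F\Delta_F(g) = \prod_{i=1}^k \zeta_{F_i}(g_{A_{i-1}:A_i})$ when all $A_i$ are convex, and $0$ otherwise. By Proposition~\ref{inverse_eta_bar_cg}, $\zeta_{F_i}(g_{A_{i-1}:A_i}) = 1$ iff the minor $g_{A_{i-1}:A_i}$ is discrete, and $0$ otherwise. Hence the product is $1$ exactly when every $A_i$ is convex \emph{and} every consecutive minor $g_{A_{i-1}:A_i}$ is discrete — which is precisely condition $(2)$ of Proposition~\ref{interior_of_V_g}, equivalent there to $F \in \interior(V_g)$. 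This gives $a_\zeta(g,F) = 1$ iff $F \in \interior(V_g)$, establishing the second bullet.

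The main obstacle is essentially bookkeeping rather than a genuine difficulty: one must be careful that the iterated coproduct formula for $\lcg$ really does produce the consecutive minors $g_{A_{i-1}:A_i}$ with the stated convexity side-conditions (this follows by induction on $k$ from the two-fold coproduct $\Delta_{S,T}(e) = e|_S \otimes e/_S$ when $S$ is closed), and that "flag of convex sets in the proper part of $L_g$" matches "face of $V_g$" under the braid-arrangement dictionary (face $\leftrightarrow$ set composition) from Section~\ref{background_geo}. Once those identifications are in place, both bullets are immediate from Proposition~\ref{inverse_eta_bar_cg} and Proposition~\ref{interior_of_V_g}.
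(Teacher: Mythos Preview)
Your proposal is correct and is exactly the argument the paper has in mind: the statement is presented as an immediate Corollary of Proposition~\ref{interior_of_V_g} (together with Proposition~\ref{inverse_eta_bar_cg} and the iterated-coproduct minors formula), with no separate proof given. Your unwinding of $\psi_F\Delta_F(g)$ for $\psi\in\{\eta,\zeta\}$ is precisely the intended justification.
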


That is, $f_I^{\eta}(g)$ enumerates faces in $V_g$, and $f_I^{\zeta}(g)$ enumerates faces in the interior of $V_g$. 

Let $F = (F_1,...,F_k)$ be a composition of $I$. Let $A_i = \cup_{j=1}^i F_j$ with $A_0 = \emptyset$. Define $\Ex(g,F)$ as the vector $(\Ex(F_i))_i$ where $\Ex(F_i) = \Ex(g_{A_{i-1}:A_i})$ denotes the extreme set under each $g_{A_{i-1}:A_i}$ for all $1\leq i \leq k$. Let $\ex(g,F) = \sum_{i=1}^k|\Ex(F_i)|$.

\begin{proposition}\label{flag_varphi'_BHP_sphere}
$$f^{\varphi'}_I(g) = \sum_{F\vDash I}a_{\varphi'}(g,F)\BM_F $$ with $$a_{\varphi'}(g,F) = \begin{cases}2^{|\ex(g,F)|} & \text{if $F$ corresponds to a flag of convex sets of $g$,} \\ 0 & \text{otherwise.} \end{cases}$$
\end{proposition}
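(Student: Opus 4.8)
The plan is to unwind the definition of the flag $f$-vector and reduce everything to the value of $\varphi'$ on the iterated minors of $g$. By \eqref{quasi_invariant} (Theorem~\ref{universal_Hopf_monoid}), for a composition $F = (F_1,\dots,F_k)\vDash I$ the coefficient of $\BM_F$ in $f^{\varphi'}_I(g)$ is
$$a_{\varphi'}(g,F) = \bigl(\varphi'_{F_1}\otimes\cdots\otimes\varphi'_{F_k}\bigr)\circ\Delta_F(g).$$
So it suffices to compute the iterated coproduct $\Delta_F(g)$ and then apply multiplicativity of the character $\varphi'$.

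First I would identify $\Delta_F(g)$. Write $A_i = F_1\sqcup\cdots\sqcup F_i$, with $A_0 = \varnothing$. Iterating the definition of $\Delta$ and using Lemma~\ref{chainconvex} (which says $A_{i+1}$ is convex in $g$ iff $A_{i+1}\backslash A_i$ is convex in $g/_{A_i}$), one shows by induction on $k$ that $\Delta_F(g) = 0$ unless every $A_i$ is convex in $g$ — that is, unless $F$ corresponds to a flag of convex sets of $g$ — and that in that case
$$\Delta_F(g) = g_{A_0:A_1}\otimes g_{A_1:A_2}\otimes\cdots\otimes g_{A_{k-1}:A_k},$$
exactly as recorded in the proof of Theorem~\ref{poly_cg_varphi'}. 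This already gives $a_{\varphi'}(g,F) = 0$ when $F$ is not a flag of convex sets, the second case in the statement.

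When $F$ is a flag of convex sets, multiplicativity of $\varphi'$ yields $a_{\varphi'}(g,F) = \prod_{i=1}^k \varphi'_{F_i}\bigl(g_{A_{i-1}:A_i}\bigr)$. Each $g_{A_{i-1}:A_i}$ is a (loopless) convex geometry on $F_i$, so the earlier computation $\varphi'_J(h) = 2^{\ex(h)}$ applies and gives $\varphi'_{F_i}(g_{A_{i-1}:A_i}) = 2^{|\Ex(g_{A_{i-1}:A_i})|} = 2^{|\Ex(F_i)|}$. Multiplying and using the definition $\ex(g,F) = \sum_{i=1}^k |\Ex(F_i)|$ gives $a_{\varphi'}(g,F) = 2^{\ex(g,F)}$, the first case. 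The argument is essentially bookkeeping; the one point that needs care is the first step — confirming that $\Delta_F$ vanishes away from flags of convex sets and equals the displayed tensor of minors on such flags — but this is exactly the induction carried out (for weak compositions) in the proof of Theorem~\ref{poly_cg_varphi'}, and once it is in place everything else follows from multiplicativity of $\varphi'$ and the formula $\varphi'_J(h) = 2^{\ex(h)}$.
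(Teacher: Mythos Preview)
Your proposal is correct and is exactly the ``unpacking the definitions'' that the paper has in mind; the paper states this proposition without proof, treating it as immediate from the formula $\varphi'_J(h)=2^{\ex(h)}$ and the iterated coproduct description used in Theorem~\ref{poly_cg_varphi'}. Your write-up makes that implicit argument explicit and is the same approach.
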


For $g$ a convex geometry on $I$, Billera, Hsiao, and Provan \cite{BHP} constructed an Eulerian poset $Q(g)\cup \{\hat{1}\}$, as the poset of faces of a certain regular CW-sphere $\Sigma(g)$. The elements in $Q(g)$ are signed copies of convex sets with signed values assigned on local extremal points. The order complex $\Delta(\overline{Q(g)})$ (Billera-Hsiao-Provan denoted as $\pm\Delta$), which is a simplicial sphere, is then the barycentric subdivision of $\Sigma(g)$. This colored simplicial sphere can also be obtained by making signed copies of faces with respect to extreme points from the order complex $\Delta(L\backslash \{\hat{0}\})$ constructed in \cite[Theorem 2.1]{BHP}.
\begin{example}\label{BHP_sphere_colinear_points}
    Again, let $g$ be the convex geometry on the set of three colinear points $\{x,y,z\}$. Then we have $g$, $Q(g)$, $\Sigma(g)$ and $\Delta(\overline{Q(g)})$ (the front half) as in Figure \ref{delta(g)}. 
\end{example}

\begin{figure}[h!]
\hspace{30mm}
\includegraphics[width=0.8\textwidth]{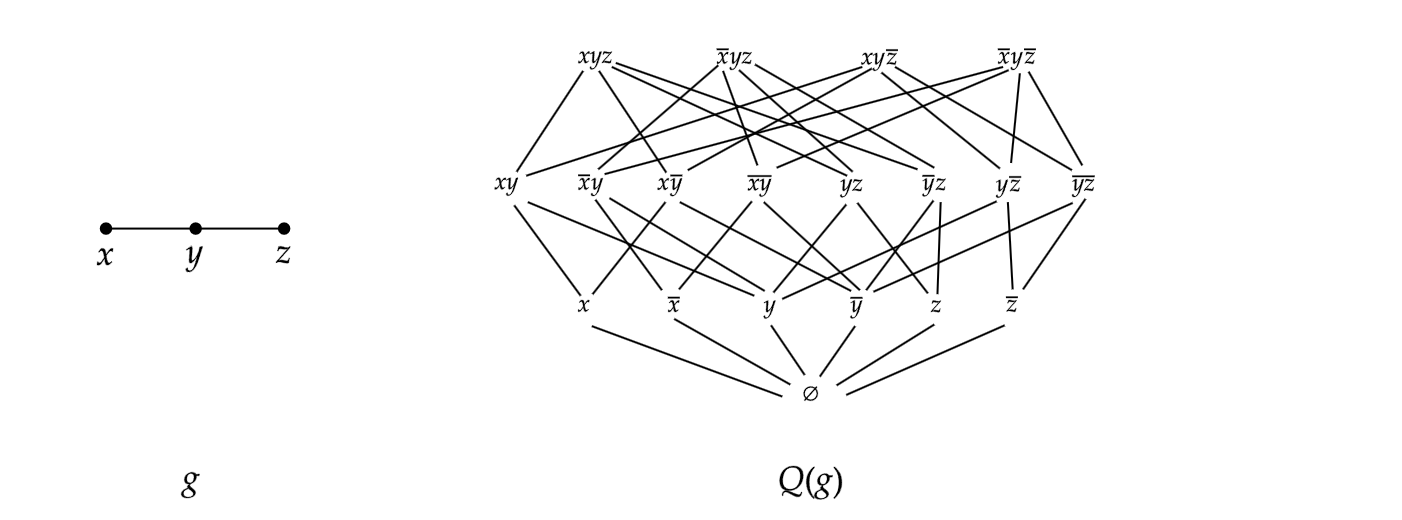}

\end{figure}

\begin{figure}[h!]
\begin{center}
\includegraphics[width=0.8\textwidth]{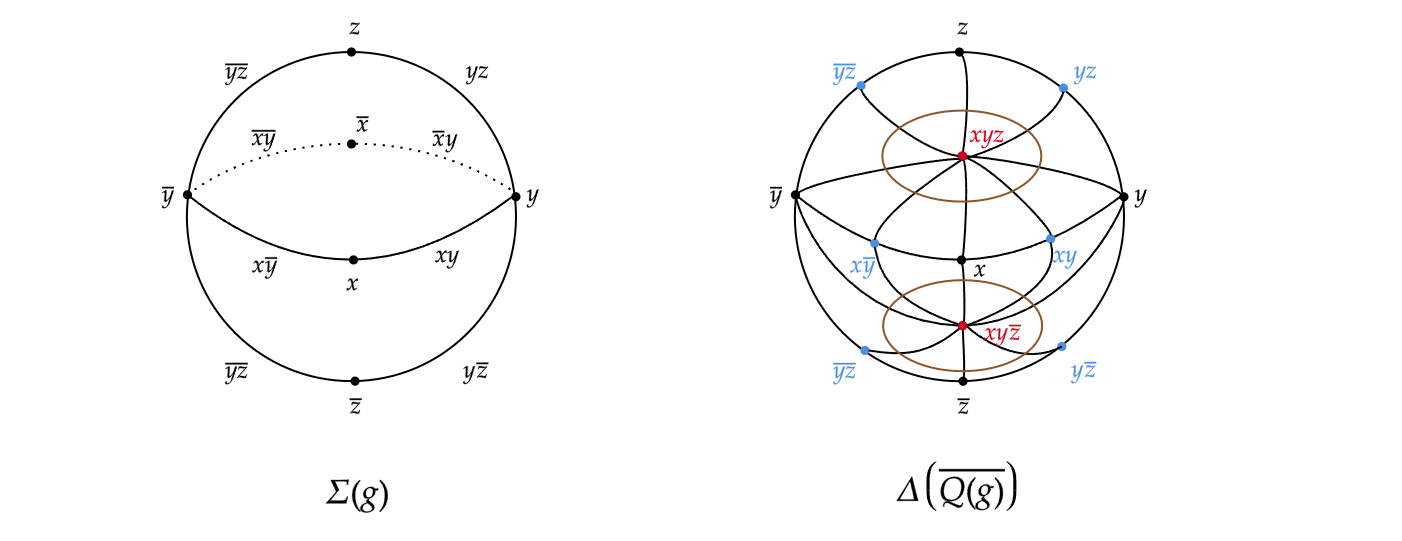}
\caption{$g, Q(g),\Sigma(g),\Delta(\overline{Q(g)})$ of the convex geometry on three colinear points}\label{pic_delta(g)}
\end{center}
\end{figure}

 It is clear from definition that $f^{\varphi'}_I(g)$ enumerates chains in $Q(g)$. Since $Q(g)\cup \{\hat{1}\}$ is Eulerian, $f^{\varphi'}_I(g)$ enumerates chains of intervals of a Eulerian posets. Hence $f^{\varphi'}_I(g)$ is the sum of the flag $f$-vectors of some Eulerian posets.

In $\Delta(\overline{Q(g)})$, we call a face $X$ a \emph{signed copy} of $X_F$ if the vertex set of $X$ is the same as that of $X_F$ up to the sign function. Let $x \in \ex_{g_{A_{i-1}:A_i}}(F_i)$. It is clear that $x\in A_{i}\backslash A_{i-1}$. Unpacking the definitions we have $$x \in \ex_{g_{A_{i-1}:A_i}}(F_i) \Leftrightarrow g_{A_{i-1}:A_i}(F_i\backslash x) = F_i \backslash x \Leftrightarrow g(A_{i}\backslash x) = A_i\backslash x,  x \in A_i\backslash A_{i-1}.$$ Hence we have \begin{equation}\label{enu_varphi'_lemma}
    \ex_{g_{A_{i-1}:A_i}}(F_i) = \ex_g(A_i)\backslash \ex_g(A_{i-1}). 
\end{equation}

By arguments in \cite[proposition 4.1]{BHP} and (\ref{enu_varphi'_lemma}), the number of the signed copies of $X_F$ in $\Delta(\overline{Q(g)})$ is $$\prod_{i=1}^{k}2^{|\ex(A_{i})\backslash \ex(A_{i-1})|} = \prod_{i=1}^k2^{|\ex_{g_{A_{i-1}:A_i}}(F_i)|} = 2^{|\ex(g,F)|}.$$

Hence we have the following geometric description for $f^{\varphi'}$.

\begin{proposition} \label{geometrice_des_f_varphi'}
The quasisymmetric invariant associated with $\varphi'$ 
  enumerates faces in the links of the vertices of $\Delta(\overline{Q(g)})$ corresponding to the facets of $\Sigma(g)$ (colored brown in the figure of $\Delta(\overline{Q(g)})$ in Example \ref{BHP_sphere_colinear_points}). These vertices are exactly the ones whose labels are signed copies of $I$.

\end{proposition}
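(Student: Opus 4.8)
The plan is to read off the coefficient of $\BM_F$ in $f^{\varphi'}_I(g)$ from Proposition~\ref{flag_varphi'_BHP_sphere}, match it with the count of signed copies of $X_F$ obtained in the discussion above, and then translate ``signed copy of $X_F$'' into ``face lying in the link of a facet-barycenter of $\Sigma(g)$'' by means of an explicit bijection.

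First I would assemble the two ingredients. By Proposition~\ref{flag_varphi'_BHP_sphere} the coefficient of $\BM_F$ in $f^{\varphi'}_I(g)$ equals $2^{|\ex(g,F)|}$ when $F$ is a flag of convex sets of $g$ and $0$ otherwise; and by the product computation preceding this statement, which rests on (\ref{enu_varphi'_lemma}), $2^{|\ex(g,F)|}$ is exactly the number of signed copies of $X_F$ in $\Delta(\overline{Q(g)})$, while if $F$ is not a flag of convex sets then $X_F$ is not a face of $\Delta(L_g\setminus\hat{0})$ and hence has no signed copies, matching the $0$. Thus it suffices to show that for every $F\vDash I$ the signed copies of $X_F$ are in bijection with the faces of type $F$ lying in the links of the vertices of $\Delta(\overline{Q(g)})$ labelled by signed copies of $I$.

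Next I would identify those vertices and set up the bijection. Since $I$ is the maximum of the lattice $L_g$ of convex sets, the maximal elements of $Q(g)$ are precisely the signed copies of $I$; under the identification of $\Delta(\overline{Q(g)})$ with the barycentric subdivision of $\Sigma(g)$, whose vertices are the faces of $\Sigma(g)$, the vertices coming from facets of $\Sigma(g)$ are exactly the barycenters of the maximal cells, i.e.\ the vertices labelled by signed copies of $I$; this proves the parenthetical assertion. For the bijection, write $F=(F_1,\dots,F_k)$ with $A_i=F_1\cup\dots\cup F_i$, so that $A_k=I$. A signed copy of $X_F$ is a face $\{\hat A_1<\dots<\hat A_k\}$ of $\Delta(\overline{Q(g)})$ with $\hat A_i$ a signed copy of $A_i$; its top vertex $v:=\hat A_k$ is then a signed copy of $I$, hence a facet-barycenter, and $Y:=\{\hat A_1<\dots<\hat A_{k-1}\}$ is a face of $\link(v)$. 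Conversely, given a facet-barycenter $v$ and a face $Y$ of $\link(v)$ whose underlying chain together with $v$ has type $F$, adjoining $v$ to $Y$ recovers a signed copy of $X_F$. These operations are mutually inverse; moreover the type of $Y\cup\{v\}$ is determined by the unsigned chain of $Y$ and does not depend on the signs carried by $v$, so summing over all facet-barycenters $v$ counts each signed copy of $X_F$ exactly once. Substituting this count for the coefficient of $\BM_F$ yields the asserted description of $f^{\varphi'}_I(g)$.

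The hard part will not be any new combinatorics but rather keeping the Billera-Hsiao-Provan bookkeeping straight: confirming the precise order relations in $Q(g)$ and the exact meaning of ``signed copy of a face'' (in particular which newly extremal points acquire free signs at each level of the flag, and whether the top convex set $I$ is a vertex of $\Delta(L_g\setminus\hat{0})$), and checking the degenerate cases --- $k=1$, where $Y$ is the empty face and the bijection simply says ``$v$ is a facet-barycenter'', and $F$ not a flag of convex sets, where both sides vanish.
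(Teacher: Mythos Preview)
Your proposal is correct and follows essentially the same approach as the paper: the paper derives the proposition directly from Proposition~\ref{flag_varphi'_BHP_sphere} together with the product computation based on (\ref{enu_varphi'_lemma}), treating the passage from ``signed copies of $X_F$'' to ``faces in the link of a facet-barycenter'' as immediate from the construction of $\Delta(\overline{Q(g)})$. You spell out that last step via an explicit bijection, which is a reasonable elaboration of what the paper leaves implicit.
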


\begin{example}
    We compute the $(f^{\varphi'}_\alpha(g))_\alpha$ for $g$ the convex geometry on three colinear points $x,y,z$, which is illustrated in Figure \ref{pic_delta(g)}, as follows.
    \begin{enumerate}
        \item $f^{\varphi'}_{(3)}(g) = 4$. This counts the number of links of our interest, that is, the number of signed copies of $\{x,y,z\}$ (we omit the brackets in the figures).
        \item $f^{\varphi'}_{(1,2)}(g) = 16$. This counts the number of $(1,2)$ chains in $Q(g)$, and also the number of vertices corresponding to blue-red edges in $\Delta(\overline{Q(g)})$ in the links of our interest.
        \item $f^{\varphi'}_{(2,1)}(g) = 16$. This counts the number of $(2,1)$ chains in $Q(g)$,and also the number of vertices corresponding to black-red edges in $\Delta(\overline{Q(g)})$ in the links of our interest.
        \item $f^{\varphi'}_{(1,1,1)}(g) = 32$. This counts the number of $(1,1,1)$ chains in $Q(g)$, and also the number of edges (facets) in the links of our interest.
    \end{enumerate}
\end{example}

For $\varphi$, we have a combinatorial description of $f^\varphi$ by unpacking the definitions. \begin{proposition} Let $F = (F_1,...,F_l)\vDash I$, and let $A_0 = \varnothing$, $A_i = F_1\cup...\cup F_i$. Then
    $$f^\varphi_I(g) = \sum_{F\vDash I}a_\varphi(g,F)\BM_F$$ with  $$a_\varphi(g,F) = \begin{cases}\lambda_1\cdot...\cdot\lambda_l & \text{if $F$ corresponds to a flag of convex sets of $g$,} \\ 0 & \text{otherwise.} \end{cases}$$ Here $\lambda_i$ counts the number of totally convex sets in $g_{A_{i-1}:A_i}$.
\end{proposition}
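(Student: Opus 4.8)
The plan is to unpack the universal formula for the quasisymmetric invariant and reduce the computation to the value of the character $\varphi$ on a single minor, which has already been determined above. By Theorem~\ref{universal_Hopf_monoid}, formula~(\ref{quasi_invariant}), we have $f^\varphi_I(g) = \sum_{F \vDash I} \varphi_F(\Delta_F(g))\,\BM_F$, so it suffices to evaluate the scalar $\varphi_F(\Delta_F(g))$ for a fixed composition $F = (F_1,\dots,F_l) \vDash I$.

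First I would recall the shape of the iterated coproduct. As in the ``minors'' paragraph of the proof of Theorem~\ref{poly_cg_varphi'}, iterating the coproduct of $\lcg$ --- namely $\Delta_{S,T}(e) = e|_S \otimes e/_S$ if $S$ is convex and $0$ otherwise --- gives
$$\Delta_{F_1,\dots,F_l}(g) = g_{A_0:A_1} \otimes g_{A_1:A_2} \otimes \cdots \otimes g_{A_{l-1}:A_l}$$
when every $A_i = F_1 \cup \cdots \cup F_i$ is convex in $g$, and $\Delta_F(g) = 0$ otherwise. This is a routine induction on $l$ using coassociativity together with the standard compatibilities between restriction and contraction of closure operators; the one point meriting a line of care is that the condition ``$F_i$ convex in $g/_{A_{i-1}}$'' arising at the $i$-th peeling step is, by Lemma~\ref{chainconvex}, equivalent to ``$A_i$ convex in $g$''. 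In particular $\Delta_F(g) \neq 0$ exactly when $F$ corresponds to a flag of convex sets of $g$, which already reproduces the case split defining $a_\varphi(g,F)$.

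Next, because $\varphi_F = \varphi_{F_1} \otimes \cdots \otimes \varphi_{F_l}$ as a functional on the tensor product, the value factors as $\varphi_F(\Delta_F(g)) = \prod_{i=1}^{l} \varphi_{F_i}(g_{A_{i-1}:A_i})$ whenever $F$ is a flag of convex sets. I would then invoke the earlier proposition identifying $\varphi_J(h)$ with the number of totally convex sets of the convex geometry $h$, applied with $h = g_{A_{i-1}:A_i}$ on ground set $F_i$: this shows each factor equals $\lambda_i$. Hence $f^\varphi_I(g) = \sum_{F} (\lambda_1 \cdots \lambda_l)\,\BM_F$, the sum running over compositions $F$ corresponding to flags of convex sets, which is the asserted formula.

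I do not anticipate a genuine obstacle: the statement is, as its own wording suggests, a direct consequence of unpacking definitions. The only step demanding mild attention is the bookkeeping in the second paragraph --- checking that the iterated coproduct imposes no constraint beyond convexity of every $A_i$ --- and this is dispatched by the induction via Lemma~\ref{chainconvex}.
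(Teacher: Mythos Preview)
Your proposal is correct and is precisely what the paper means by ``unpacking the definitions'': the paper does not spell out a proof here but simply asserts the formula follows from the universal formula~(\ref{quasi_invariant}), the minor description of the iterated coproduct, and the earlier identification of $\varphi_I(g)$ with the number of totally convex sets. Your write-up supplies exactly these three ingredients in the right order, with Lemma~\ref{chainconvex} handling the convexity bookkeeping, so there is nothing to add.
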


A natural question is whether we can construct an analogue of the discussion for $f^{\varphi'}$ in the case of $f^\varphi$. That is, whether there exists a colored simplicial polytope such that $f^{\varphi}$ enumerates a well-structured collection of faces.

For a more general version, we may ask whether there exists an Eulerian poset such that $f^{\varphi}$ enumerates chains in certain intervals. The case for general convex geometries remains open. However, the case for $g = g_p$, where $p$ is a partial order, is understood. Specifically, we consider the following construction: let $\overline{p}$ denote the reverse partial order of $p$, obtained by reversing all relations in $p$.

\begin{enumerate}
    \item Construct $\Delta(\overline{Q(\overline{p})})$.
    \item For vertex $x\in \Delta(\overline{Q(\overline{p})})$ such that $x$ is not a signed copy of $I$, replace $x$ by $I\backslash x$.
\end{enumerate}
Then the result simplicial sphere, namely $\triangle_\varphi(\overline{Q(p)})$, satisfies $f^\varphi_I$ enumerates (colored) faces in the links of the signed copies of $I$ in $\triangle_\varphi(\overline{Q(p)})$. This follows from the fact that $\Min(p) = \Max(\overline{p})$. 

We have the associated Eulerian poset $Q_\varphi(g_p)$ defined as follows. The elements in the poset are vertices in $\Delta_\varphi(\overline{Q(p)})$, so they are labeled convex sets $S\subseteq I$. The poset relation is defined as follows. $(S,l) \leq (S',l')$ if $S\subseteq S'$ and $S \cup S'$ is a face in $\Delta_\varphi(\overline{Q(p)})$. Indeed, the shape of $Q_\varphi(g_p)$ is the same as that of $Q(g_p)$ (and hence $Q_\varphi(g_p)\cup \{\hat{1}\}$ is Eulerian) and proper elements in $Q_\varphi(g_p)$ are complements to those in $Q(g_p)$ except the signed copies of $I$'s (which are kept). Hence $f^\varphi$ enumerates chains in rank $|I|$ intervals with minimal element $\emptyset$ in a Eulerian poset.

\begin{question}\label{varphi_question}
Extend the above construction of $f^\varphi$ to convex geometries in general.
\end{question}

\section{$ab$ and $cd$-indices and supersolvable convex geometries}\label{ab_cd_section}

In this section, we discuss the coefficients of $ab$-indices associated with $\eta$, $\zeta$ and the coefficients of $cd$-indices associated with $\varphi'$, $\varphi$ on the Hopf monoid of posets $\po$ and convex geometries $\lcg$. On $\lcg$ we focus on the \emph{supersolvable convex geometries}, which admits a geometric presentation when we look at the associated order complex as a subcomplex of the braid arrangement of ground set $I$.

\subsection{$ab$-index associated with partial orders}\label{ab_discussion}

It is known that for pure lexicographically shellable posets, the entries of the flag $h$-vector have a simple combinatorial interpretation as the number of maximal chains with fixed descent set \cite{wachs2006poset}. Indeed, when we discuss $\eta$ on the Hopf monoid of partial orders $\po$, the associated flag $f$-vector $f^\eta(p)$ is the flag $f$-vector of the order complex of (the proper part of) the lattice of order ideals of $p$, which is pure lexicographically shellable. We provide a geometric proof that the coefficients of $ab$-index of $h^\zeta$ and $h^\eta$ come from the number of two-line permutations determined by certain chambers in $V_p$ (maximal chains in $L_p$).

Let $p$ be a partial order on the ground set $I$. recall the corresponding flag $f$-vectors for $\zeta$ are defined as follows. For $\alpha = (\alpha_1,...,\alpha_k) \vDash |I| = n$ and the associated $S(\alpha) = \{\alpha_1, \alpha_1+\alpha_2,...,\alpha_1+...+\alpha_{k-1}\}$ (we can simply write $S$ if there is no confusion with the choice of $\alpha$), $f_\alpha = f_S$ counts the number of $F = (F_1,...,F_k)\vDash I$ such that $F$ is of type $\alpha$ and for $A_i = F_1 \cup...\cup F_i$ with $A_0 = \emptyset$, $g_{A_{i-1}:A_i}$ is discrete.

Recall the flag $h$-vector is determined by the following relation. 

$$h^\zeta_\alpha  = \sum_{\alpha' \leq \alpha} (-1)^{l(\alpha) - l(\alpha')} f^\zeta_{\alpha'}.$$

For $S\subseteq [n-1]$, Let $m(a,b)_S$ denote the degree $n-1$ $ab$-monomial with $b$ on position $s$ for each $s\in S$.

\begin{theorem}\label{ab_zeta_p}
    Let $\ell_0\in V_{\overline{p}}$, i.e., $\ell_0$ is a linear extension of $\overline{p}$, then
$$[m(a,b)_S]\Psi^{\zeta}_p = \# \left \{\ell \in V_p \mid \Descent(\left ( \begin{array}{c}
     \ell_0  \\
     \ell 
\end{array} \right )) = S\right \}.$$
\end{theorem}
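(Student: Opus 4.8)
The plan is to translate the claim about the flag $h$-vector into an equivalent statement about the flag $f$-vector, and then to prove that statement by an explicit Tits-product bijection inside the braid arrangement $B_I$. Write $n=|I|$ and identify $\alpha=(\alpha_1,\dots,\alpha_k)\vDash n$ with $S(\alpha)=\{\alpha_1,\alpha_1+\alpha_2,\dots,\alpha_1+\cdots+\alpha_{k-1}\}\subseteq[n-1]$, so that $\beta\le\alpha$ iff $S(\beta)\subseteq S(\alpha)$ and $l(\alpha)=|S(\alpha)|+1$. Since $\Psi^\zeta_p=\sum_\alpha h^\zeta_\alpha\,m(a,b)_\alpha$ and, from the definition of the flag $h$-vector, $h^\zeta_{S}=\sum_{T\subseteq S}(-1)^{|S|-|T|}f^\zeta_{T}$, Möbius inversion on the Boolean lattice $2^{[n-1]}$ shows that the asserted identity $[m(a,b)_S]\Psi^\zeta_p=\#\{\ell\in V_p:\Descent(\left(\begin{smallmatrix}\ell_0\\ \ell\end{smallmatrix}\right))=S\}$ is equivalent to
\[
f^\zeta_{S}(p)\;=\;\#\Bigl\{\ell\in V_p \ :\ \Descent\bigl(\left(\begin{smallmatrix}\ell_0\\ \ell\end{smallmatrix}\right)\bigr)\subseteq S\Bigr\},\qquad S=S(\alpha).
\]
The goal is then to prove this. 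I recall that for $g=g_p$ the complex $V_p:=V_{g_p}$ is the order complex of the lattice of order ideals of $p$, and that as a subcomplex of $B_I$ it is precisely the cone of $p$: its faces are the set compositions of $I$ all of whose prefixes are lower sets of $p$ (equivalently, the flags of order ideals), and its chambers are the linear extensions of $p$. By the corollary to Proposition~\ref{interior_of_V_g}, $f^\zeta_{S}(p)$ counts the faces of $V_p$ of type $\alpha(S)$ that lie in $\interior(V_p)$.

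Next I would construct mutually inverse bijections between these interior faces and the chambers $\ell\in V_p$ with $\Descent(\left(\begin{smallmatrix}\ell_0\\ \ell\end{smallmatrix}\right))\subseteq S$. In one direction, send a face $F=(F_1,\dots,F_k)\in\interior(V_p)$ of type $\alpha(S)$ to the Tits product $F\ell_0$. Since $\ell_0$ is a chamber of $B_I$, $F\ell_0$ is a chamber refining $F$; by the geometric description of the Tits product it orders each block $F_j$ according to $\ell_0$, so every descent of $\left(\begin{smallmatrix}\ell_0\\ F\ell_0\end{smallmatrix}\right)$ occurs at a block boundary of $F$, giving $\Descent(\left(\begin{smallmatrix}\ell_0\\ F\ell_0\end{smallmatrix}\right))\subseteq S$. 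To see $F\ell_0\in V_p$, I would invoke Proposition~\ref{interior_of_V_g}: each prefix $A_i=F_1\cup\cdots\cup F_i$ is a lower set of $p$ and each minor $g_{A_{i-1}:A_i}$ is discrete, i.e.\ $p$ induces no relations inside any block $F_i$; hence if $a<_p b$ with $b\in F_j$ then $a\in A_j\setminus F_j$, so $a$ lies in a strictly earlier block and therefore precedes $b$ in $F\ell_0$, making $F\ell_0$ a linear extension of $p$.

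In the other direction, send a chamber $\ell\in V_p$ with $\Descent(\left(\begin{smallmatrix}\ell_0\\ \ell\end{smallmatrix}\right))\subseteq S$ to the coarsening of $\ell$ into the composition of type $\alpha(S)$. Because $S$ is exactly the set of block boundaries of this coarsening and $\left(\begin{smallmatrix}\ell_0\\ \ell\end{smallmatrix}\right)$ has no descent strictly inside a block, $\ell$ lists the letters of each block in increasing $\ell_0$-order, so applying the Tits product with $\ell_0$ recovers $\ell$; this makes the two constructions inverse to one another. It then remains to check that the coarsening $F$ lies in $\interior(V_p)$: every prefix of $F$ is a prefix of the linear extension $\ell$, hence a lower set of $p$; and a relation $a<_p b$ within a block $F_j$ would force $a$ before $b$ in $\ell$ and hence before $b$ in $\ell_0$ (the block being $\ell_0$-sorted), contradicting $\ell_0\in V_{\overline p}$, which requires $b$ before $a$ in $\ell_0$. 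By Proposition~\ref{interior_of_V_g} this yields $F\in\interior(V_p)$, so the maps are inverse bijections and the displayed $f$-identity — hence the theorem — follows.

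The step I expect to be the real obstacle is exactly these two ``landing'' verifications: that the Tits product $F\ell_0$, a priori merely a chamber of the ambient braid arrangement, actually remains inside the cone $V_p$, and dually that coarsening a chamber of $V_p$ with small descent set produces an \emph{interior} face rather than a boundary one. Both rely on the precise description of $\interior(V_p)$ in Proposition~\ref{interior_of_V_g} and on the compatibility of $\ell_0$ with the reversed order $\overline p$ — this is precisely where the hypothesis $\ell_0\in V_{\overline p}$, rather than $\ell_0\in V_p$, is needed. A minor but necessary preliminary is to fix the convention for the two-line permutation $\left(\begin{smallmatrix}\ell_0\\ \ell\end{smallmatrix}\right)$ so that a descent at position $i$ means $\ell(i)$ follows $\ell(i+1)$ in the order $\ell_0$, which is what makes ``descents lie only at block boundaries'' literally correct.
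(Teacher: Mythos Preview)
Your proof is correct and follows essentially the same strategy as the paper: reduce via M\"obius inversion to the flag $f$-vector identity, then set up the Tits-product bijection $F\mapsto F\ell_0$ between interior faces of $V_p$ of type $\alpha(S)$ and chambers $\ell\in V_p$ with $\Descent\bigl(\left(\begin{smallmatrix}\ell_0\\ \ell\end{smallmatrix}\right)\bigr)\subseteq S$.

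The one place you diverge is in the ``landing'' verification for the inverse map, and the difference is worth noting. You check $F\in\interior(V_p)$ combinatorially, by directly verifying the two conditions of Proposition~\ref{interior_of_V_g}: prefixes are lower sets because they are prefixes of the linear extension $\ell$, and the minors are discrete because a $p$-relation inside a block would force the $\ell_0$-order to agree with $p$ on that pair, contradicting $\ell_0\in V_{\overline p}$. The paper instead argues geometrically: from $\ell_0\in V_{\overline p}$ one gets $\overline{\ell_0}\in V_p$, hence $F\overline{\ell_0}\in V_p$ by convexity of the cone $V_p$; since $F\ell_0=\ell$ and $F\overline{\ell_0}$ are opposite chambers in $\starr(F)$ and both lie in the convex cone $V_p$, all of $\starr(F)$ must lie in $V_p$, i.e.\ $F\in\interior(V_p)$. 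Your argument is more elementary and self-contained; the paper's convexity argument, on the other hand, is what carries over verbatim to the supersolvable convex geometry setting (Theorem~\ref{ab_character_supersolvable}), where no global partial order is available and one must rely on the convexity of the maximal cones containing the chief chain.
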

That is, the coefficients of the $ab$-index associated with $\zeta$ counts the number of permutations $\left ( \begin{array}{c}
     \ell_0  \\
      \ell
\end{array} \right )$ with descents with respect to $\ell_0$ on positions of $b$'s, such that the base linear order $\ell_0$ satisfies $\ell_0 \in V_{\overline{p}}$ and $\ell$ is a linear extension of $p$.
\begin{proof} Fix an $\ell_0\in V_{\overline{p}}$. For each $\alpha \vDash |I|$, let $\mathrm{S}_\alpha$ denote the set of all $F\vDash I$ with type $\alpha$ such that $F$ contributes to the nonzero components of $f^\zeta_\alpha$. That is, $$\mathrm{S}_\alpha = \{ F\in \interior(V_g) \mid \type(F) = \alpha\}.$$ Consider $$L_\alpha = \{F\ell_0 \mid F\in \mathrm{S}_\alpha\}.$$

Note that $|L_\alpha| = f_\alpha$. By definition, each such $F\ell_0$ is a chamber in $V_p$. Specifically, this chamber is associated with the linear order $\ell$ such that $\ell$ is a linear extension of $F$ and $\ell|_{F_i} = {\ell_0}|_{F_i}$. Since the type of $F$'s are fixed, each $F\ell_0$ is distinct for different $F$. Hence $|L_\alpha| = f_\alpha$. Observe that with respect to $\ell_0$, the descents of $\ell$ can only appear on the positions corresponding to $\alpha_1, \alpha_1+\alpha_2...,\alpha_1+...+\alpha_{k-1}$. 

Note it is clear that the map from $\alpha \vDash n$ to $S\subseteq[n-1]$ is bijective. We claim that $$L_\alpha = \left \{\ell \in \interior(V_p)\mid \Des (\left ( \begin{array}{cc}
     \ell_0  \\
     \ell 
\end{array} \right )) \subseteq S \right \}.$$

To show this claim, it is left to prove the $\supseteq$ direction. Consider $\ell$ satisfying the condition on the right hand side. Let $F$ be the composition of $I$ of type $\alpha$ such that $\ell$ is an linear extension of $F$. Then we have $F\in V_p$, $F\ell_0 = \ell$. Since $\ell_0 \in V_{\overline{p}}$, $\overline{\ell_0} \in V_p$. Hence $F\overline{\ell_0} \in V_p$ as it is convex. By the convexity of $V_p$ again we can deduce that $\starr(F) \in V_p$ and hence $F\in \interior(V_g)$. Hence $\ell \in L_\alpha$ so the claim holds.

Now consider $\alpha'$ such that $\alpha\leq \alpha'$.  Then $$L_{\alpha'} = L_\alpha \cup \left \{\ell \in \interior(V_p)\mid S(\alpha) \subsetneq \Des \left ( \begin{array}{cc}
     \ell_0  \\
     \ell 
\end{array} \right ) \subseteq S(\alpha')\right \}.$$

Then use the principle of inclusion-exclusion we have for fixed $\alpha$, $$\#\left \{\ell \in V_p\mid \Des (\left ( \begin{array}{cc}
     \ell_0  \\
     \ell 
\end{array} \right )) = S(\alpha)\right \} = \sum_{\alpha' \leq \alpha}(-1)^{l(\alpha')-l(\alpha)}\# \left\{\ell\in V_p\mid \Des (\left ( \begin{array}{cc}
     \ell_0  \\
     \ell 
\end{array} \right )) = S(\alpha') \right \}.$$
In particular, $\# \left \{\ell\in V_p\mid \Des (\left ( \begin{array}{cc}
     \ell_0  \\
     \ell 
\end{array} \right )) = S(\alpha)\right \} = h^\zeta_\alpha$. \end{proof}

Fix an $\ell_0 \in V_p$. Let $\alpha = (\alpha_1,...,\alpha_k)\vDash |I|$. By definition, the flag $f$-vector $f^\eta_T$ counts the number of faces $F_\alpha$ (corresponding to $F_\alpha\vDash I$) on $V_p$ with type $\alpha$. Since $p$ is a partial order, $V_p$ is convex, we have for each such face $F_\alpha$, there is a chamber (corresponding to a linear order) $\ell\in V_p$ with $\ell = F\ell_0$, with possible descents on position $\alpha_1,...,\alpha_{k-1}$.

By the same arguments as those in Theorem~\ref{ab_zeta_p}, but considering $\ell_0 \in V_p$, we obtain the result for the $ab$-index of the lattice of order ideals of the partial order $p$.

\begin{theorem}
Fix $\ell_0\in V_{{p}}$, i.e., $\ell_0$ is a linear extension of ${p}$. Let $S\subseteq [n-1]$, then
$$[m(a,b)_S]\Psi^{\eta}_p = \# \left \{\ell \in V_p \mid \Descent(\left ( \begin{array}{c}
     \ell_0  \\
     \ell 
\end{array} \right )) = S\right \}.$$

\end{theorem}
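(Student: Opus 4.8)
The plan is to run the proof of Theorem~\ref{ab_zeta_p} almost verbatim, interchanging the roles of $V_p$ and $\interior(V_p)$ and replacing the hypothesis $\ell_0\in V_{\overline p}$ by $\ell_0\in V_p$. Recall from the discussion preceding the statement that the flag $f$-vector entry $f^\eta_\alpha(p)$ counts the faces $F\vDash I$ of type $\alpha$ lying in $V_p$; equivalently, those $F=(F_1,\dots,F_k)$ for which each $A_i:=F_1\cup\cdots\cup F_i$ is an order ideal of $p$. Having fixed $\ell_0\in V_p$ (a linear extension of $p$), for each $\alpha\vDash n$ I would set
$$\mathrm{S}_\alpha=\{F\in V_p\mid \type(F)=\alpha\},\qquad L_\alpha=\{F\ell_0\mid F\in\mathrm{S}_\alpha\},$$
the products being taken in the Tits monoid of the braid arrangement $B_I$.

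The core of the argument is the identity
$$L_\alpha=\left\{\ell\in V_p \mid \Des\left(\begin{array}{c}\ell_0\\\ell\end{array}\right)\subseteq S(\alpha)\right\}.$$
For ``$\subseteq$'': since $V_p$ is a convex cone it is closed under the Tits product, so every $F\ell_0$ is again a chamber of $V_p$, namely the linear extension $\ell$ of $p$ that refines $F$ and agrees with $\ell_0$ on each block $F_i$; hence any descent of the two-line permutation $\binom{\ell_0}{\ell}$ lies at a boundary between consecutive blocks, i.e.\ in $S(\alpha)$. Moreover distinct $F\in\mathrm{S}_\alpha$ yield distinct $F\ell_0$ — the blocks are recovered as the successive runs of lengths $\alpha_1,\dots,\alpha_k$ — so $|L_\alpha|=|\mathrm{S}_\alpha|=f^\eta_\alpha(p)$. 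For ``$\supseteq$'': given $\ell\in V_p$ with $\Des\binom{\ell_0}{\ell}\subseteq S(\alpha)$, let $F$ be the composition whose $i$-th block is the set of elements in positions $\alpha_1+\cdots+\alpha_{i-1}+1,\dots,\alpha_1+\cdots+\alpha_i$ of $\ell$; each $A_i$ is then an initial segment of the linear extension $\ell$, hence an order ideal of $p$, so $F\in V_p$, and the hypothesis on descents forces $\ell$ and $\ell_0$ to induce the same order within each block, whence $F\ell_0=\ell$. This gives $f^\eta_\alpha(p)=\#\{\ell\in V_p\mid \Des\binom{\ell_0}{\ell}\subseteq S(\alpha)\}$. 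Finally, applying M\"obius inversion over the refinement poset of compositions of $n$ (equivalently, over subsets of $[n-1]$) exactly as at the end of Theorem~\ref{ab_zeta_p} — starting from $h^\eta_\alpha=\sum_{\alpha'\le\alpha}(-1)^{l(\alpha)-l(\alpha')}f^\eta_{\alpha'}$ together with the disjoint decomposition $\{\ell:\Des\subseteq S(\alpha)\}=\bigsqcup_{\alpha'\le\alpha}\{\ell:\Des=S(\alpha')\}$ — yields $h^\eta_\alpha=\#\{\ell\in V_p\mid \Des\binom{\ell_0}{\ell}=S(\alpha)\}$, which is by definition $[m(a,b)_{S(\alpha)}]\Psi^\eta_p$.

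The step that needs the most care — and the only place where the hypothesis $\ell_0\in V_p$ (rather than $\overline{\ell_0}\in V_p$, as in the $\zeta$ case) is used — is the ``$\subseteq$'' inclusion: one must know that inserting the order of $\ell_0$ inside each block of a face $F\in V_p$ still produces a linear extension of $p$. This is exactly where I would invoke the fact that $V_p$ is a geometric cone, hence closed under the Tits product; the remaining points (preservation of types, matching of descent positions with block boundaries, and the final inclusion--exclusion) are routine bookkeeping identical to the proof of Theorem~\ref{ab_zeta_p}.
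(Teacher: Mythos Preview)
Your proposal is correct and follows exactly the approach the paper takes: the paper's own proof is simply the sentence ``By the same arguments as those in Theorem~\ref{ab_zeta_p}, but considering $\ell_0\in V_p$, we obtain the result.'' You have faithfully spelled out those arguments, correctly identifying that the only substantive change is that the $\supseteq$ direction no longer needs the $\starr(F)\subseteq V_p$ step (one only needs $F\in V_p$, which is immediate from $\ell\in V_p$), while the $\subseteq$ direction now uses closure of the convex cone $V_p$ under the Tits product with $\ell_0\in V_p$ rather than $\overline{\ell_0}\in V_p$.
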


\subsection{Supersolvable convex geometries}\label{supersolvable_convex_section}

Stanley introduced \emph{supersolvable} lattice as a generalization of the subgroup lattice of a solvable group \cite{Stanley1972SupersolvableL}. A lattice $L$ is \emph{supersolvable} if it admits a \emph{chief chain} $\cc$, that is, a maximal chain such that for all chains $m$ in $L$, the smallest sublattice in $L$ containing $\cc$ and $m$ is distributive. 

There are several equivalent conditions for a maximal chain to be a chief chain. One of them is that every element in $\cc$ is \emph{rank-modular}. That is, if $\rho$ is a rank function on $L$,  then for each $m\in \cc$ and each $x\in L$, $\rho(m \wedge x) + \rho(m\vee x) = \rho(m) + \rho(x).$

Bj\"orner and Wachs \cite{Bjrner1983OnLS} provided a description of the flag $h$-vectors of a supersolvable lattice in sense of \emph{EL-labeling}.

Armstrong \cite{Armstrong2007TheSO} introduced the notion of \emph{supersolvable convex geometries} in the context of antimatroids. Specifically, $g$ is a \emph{supersolvable convex geometry} if $L_g$ is a supersolvable lattice. 

In this section we provide a geometric characterization of the order complex of $L_g$ as a subcomplex in the braid arrangement of $I$, and then a geometric description of the associated $ab$-index on $\eta$, $\zeta$.

\begin{lemma}\label{in_meet_dis_modularset_and_dissub_generate_dissub}
Let $g$ be a convex geometry on a finite ground set $I$. Let $M$ be a set of rank modular elements and $D$ a distributive sublattice of $L_g$. Then the sublattice in $L_g$ generated by $M$ and $D$ is distributive.
\end{lemma}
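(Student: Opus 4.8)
The plan is to reduce the statement to a claim about how rank-modular elements interact with a distributive sublattice inside the meet-distributive lattice $L_g$, and then to build the generated sublattice one modular element at a time. First I would recall the basic structure: $L_g$ is a finite meet-distributive (equivalently, lower semimodular with a rank function counting the size of the convex set) lattice, and a sublattice $D$ being distributive means every element of $D$ is both meet- and join-modular relative to the elements of $D$. The goal is to show that the sublattice $\langle M \cup D\rangle$ generated by $M$ and $D$ is still distributive. Since a finite lattice is distributive iff it contains no sublattice isomorphic to $N_5$ or $M_3$, one route is to show $\langle M\cup D\rangle$ omits both; but the cleaner route, given that rank-modularity is the hypothesis handed to us, is to show directly that every element of the generated sublattice is rank-modular in $L_g$, and then invoke the standard fact (used implicitly in the supersolvability discussion) that a sublattice all of whose elements are rank-modular, when it contains a chief chain's worth of structure, is distributive — more precisely, that a sublattice of a (co)semimodular lattice consisting of modular elements is distributive.

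The key steps, in order, would be: (1) By induction on $|M|$, reduce to the case $|M| = 1$, say $M = \{m\}$ with $m$ rank-modular in $L_g$; the inductive step adds one modular element at a time, using that the sublattice generated so far is distributive (hence all its elements are modular in $L_g$ by the lemma below about distributive sublattices of semimodular lattices). (2) Describe the elements of $\langle \{m\}\cup D\rangle$ explicitly: because $m$ is modular, the maps $x\mapsto x\vee m$ and $x \mapsto x\wedge m$ behave well, and one shows the generated sublattice consists of elements of the form $(d_1 \wedge m)\vee d_2$ or $d_1 \vee (d_2 \wedge m)$ for $d_1, d_2 \in D$ — in fact, using modularity of $m$ and distributivity of $D$, every element of $\langle\{m\}\cup D\rangle$ can be written as $(d\wedge m)\vee d'$ with $d, d' \in D$, $d' \le d$, or dually. (3) Verify that each such element is rank-modular in $L_g$: this is where one uses submodularity/supermodularity of the rank function together with the rank-modularity of $m$ and of the elements of $D$, chasing the rank equality $\rho(x\wedge y) + \rho(x\vee y) = \rho(x) + \rho(y)$ through the explicit forms. (4) Conclude: a finite semimodular lattice in which a sublattice consists entirely of rank-modular elements is distributive — apply this to $\langle M\cup D\rangle$.

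The main obstacle I anticipate is step (2)–(3): getting a clean normal form for the elements of the sublattice generated by a single modular element together with a distributive sublattice, and then pushing the rank identities through that normal form. The subtlety is that modularity of $m$ in $L_g$ is a statement about all of $L_g$, but distributivity of $D$ is only internal to $D$, so one must be careful that joins and meets formed inside $\langle\{m\}\cup D\rangle$ still land where the normal form predicts; closure of the candidate set of normal-form elements under $\wedge$ and $\vee$ has to be checked, and this is exactly where the interplay of ``$m$ modular'' and ``$D$ distributive'' does the work. A secondary point requiring care is that $L_g$ being meet-distributive gives lower semimodularity, so I should consistently work with the order-dual conventions (or, equivalently, pass to the meet-semilattice of convex sets where rank is cardinality and the semimodular inequality is the familiar $|A\cup B| + |A\cap B| \le |A| + |B|$ won't hold in general — rather one uses that convex sets form a closure system, so I must phrase everything in terms of $L_g$'s own rank function $\rho$ and the modularity hypothesis, not in terms of set cardinalities). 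Once the normal form and its closure are established, steps (3) and (4) are routine rank arithmetic and an appeal to the standard characterization of distributivity.
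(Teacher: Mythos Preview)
Your plan has a genuine gap at the very heart of the induction. You assume in step~(1) that once $\langle M' \cup D\rangle$ is known to be distributive for some $M'\subsetneq M$, all of its elements become rank-modular in the ambient lattice $L_g$, so that the next modular element can be adjoined. But distributivity of a sublattice is an \emph{internal} statement and says nothing about rank-modularity of its elements in $L_g$; indeed the hypothesis never asserts that the elements of $D$ itself are rank-modular in $L_g$. So step~(3), ``verify that each element of $\langle\{m\}\cup D\rangle$ is rank-modular in $L_g$'', is not attainable from the stated hypotheses, and hence step~(4) cannot be invoked. The normal-form manoeuvre in step~(2) is also more delicate than you suggest: closure of the candidate set under $\vee$ and $\wedge$ requires exactly the identities you are trying to establish.

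The paper's argument avoids all of this by exploiting the concrete set-theoretic realisation of $L_g$ that you explicitly set aside. In $L_g$ every element is a subset of $I$, the meet is literally intersection, and the rank is cardinality. Rank-modularity of $m$ against an arbitrary $y\in L_g$ therefore reads $|m\vee y| + |m\cap y| = |m| + |y|$, i.e.\ $|m\vee y| = |m\cup y|$; since $m\cup y \subseteq m\vee y$ always, this forces $m\vee y = m\cup y$. So the task reduces to showing that in any lattice word built from $M\cup D$, every join symbol may be replaced by a union. The paper does this by a short induction on the number of generators appearing in the word, handling the base cases $m_1\vee y_1$, $m_1\vee n_1$, $x_1\vee y_1$ directly and the inductive step by isolating an innermost $\wedge$ and squeezing with the set inclusions $A\cup(B\cap C)\subseteq A\vee(B\wedge C)\subseteq (A\vee B)\cap(A\vee C)$. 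The point is that the set-theoretic identity $|\cdot|$ does all the heavy lifting; an abstract rank-function argument would need a substitute for ``$m\cup y\subseteq m\vee y$ with equal cardinalities implies equality'', and your outline does not supply one.
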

\begin{proof} Let $L'$ be the sublattice generated by $M$ and $D$. By assumption, we have for $x,y\in L'$, $x\wedge y = x\cap y$. Hence to show that $L'$ is distributive, it is enough to show that $x\vee y = x\cup y$. To do so we want to show that all $\vee$'s can be replaced by $\cup$'s when we write $x\vee y$ as meets and joins of elements in $M$ and $D$. We induct on the number of elements from $D$ and $M$ that are involved in $x\vee y$. 

The induction base case is covered as follows. Let $x_1,y_1\in D$, $m_1, n_1\in M$.

\begin{enumerate}
\item $x_1\vee y_1 = x_1\cup y_1$, $m_1\vee n_1 = m_1\cup n_1$.

\item $m_1\vee y_1 = m_1\cup y_1$, $x_1\vee n_1 = x_1\cup n_1$.

\end{enumerate}
Each equality holds as follows.
\begin{enumerate}
\item The first statement follows from the fact that $D$ is distributive, and the second statement follows from the fact $M$ is a set of rank modular elements in $L$. That is, $\rank(m_1\vee n_1) = |m_1|+ |n_1| - |m_1\cap n_1|.$
\item Since $M$ is a set of rank modular elements, and $m_1 \in M$, $\rank(m_1\vee y_1) + \rank(m_1 \wedge y_1) = \rank(m_1) + \rank(y_1)$. Hence $|m_1\vee y_1| + |m_1 \cap y_1| = |m_1| + |y_1|$. Hence $|m_1\vee y_1| = |m_1\cup y_1|$ and so $m_1\vee y_1 = m_1\cup y_1$. The other statement follows from commutativity of $\vee$.

\end{enumerate}

Suppose the number of elements from $D$, $M$ involved is $n > 2$. If there are no $\wedge$'s in $x\vee y$, then we can use the axioms of $\vee$, modularity of elements in $M$ and distributivity of $D$ to replace all $\vee$'s by $\cup$'s.

Suppose there exists at least one $\wedge$ in $x\vee y$. Then we can choose a $\wedge$ such that $$x\vee y = ... \vee (B\wedge C) \vee ... $$ That is, there is no parenthesis in $x\vee y$ in which there exists $\wedge$ outside $B\wedge C$. By commutativity of $\vee$, we can write $x\vee y$ as $A \vee (B \wedge C)$. Note $$A\vee (B\wedge C) = A\vee (B\cap C)  \subseteq (A\vee B)\cap (A\vee C). $$

By induction hypothesis, all $\vee$'s in $A\vee B$ and $A\vee C$ can be replaced by $\cup$, and so $A \vee B = A \cup B$, $A\vee C = A\cup C$. Hence $A\vee (B\wedge C) \subseteq (A\cup B)\cap (A\cup C) = A\cup (B\cap C).$ Note that since $A\vee (B\cap C)$ is the smallest convex set containing $A$ and $B\cap C$, we have $A\cup(B\cap C)\subseteq A\vee (B\wedge C)$. Hence $A\vee (B\wedge C) = A\cup (B \cap C)$ and so we can replace all $\vee$'s in $x\vee y$ by $\cup$. Indeed, $x\vee y = x\cup y$. \end{proof}

\begin{theorem}\label{supersolvable_equivalent_condition}
Let $g$ be a convex geometry. Let $p_1$,..., $p_k$ be partial orders such that $V_{p_1}$,..., $V_{p_k}$ are the maximal convex cones in $V_g$.
Then the following statements are equivalent.\begin{enumerate}
\item[$(1)$] $\bigcap_{i=1}^k V_{p_i}$ contains at least one chamber. That is, there exists a partial order $p_0$ on $I$ with $V_{p_0} = \bigcap_{i=1}^k V_{p_i}$. 
    \item[$(2)$] $g$ is supersolvable.
\end{enumerate}
\end{theorem}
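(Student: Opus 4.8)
The plan is to transport the statement into the lattice $L_g$ of convex sets and use the braid-arrangement dictionary together with Lemma~\ref{in_meet_dis_modularset_and_dissub_generate_dissub}. First I would record the two reductions. Since $V_{p_i}=\{x\in\mathbb{R}^I: x_a\le x_b \text{ whenever } a\le_{p_i}b\}$, the intersection $\bigcap_{i=1}^k V_{p_i}$ is the cone of the preorder generated by $p_1\cup\dots\cup p_k$; hence by \cite[Proposition 2.2]{MS} condition (1) is equivalent to the existence of a single linear order $\ell$ on $I$ extending every $p_i$, and such an $\ell$ automatically has all initial segments convex (they are order ideals of each $p_i$, hence convex since $V_{p_i}\subseteq V_g$), so $\ell$ is then a chamber lying in every $V_{p_i}$. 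Second, I would identify the maximal convex cones: $V_{p_i}\subseteq V_g$ exactly when every order ideal of $p_i$ is convex in $g$, so $L_{p_i}$ is a distributive sublattice of $L_g$ of height $n=|I|$; conversely, a Birkhoff-type argument shows that every distributive sublattice of $L_g$ of height $n$ containing $\hat{0}$ and $\hat{1}$ equals $L_q$ for some partial order $q$ on $I$ with $V_q\subseteq V_g$. Thus ``$V_{p_i}$ maximal convex cone'' becomes ``$L_{p_i}$ maximal among height-$n$ distributive sublattices of $L_g$''. Finally, using the flag property \cite[Theorem 2.1]{EJ} (which also makes $L_g$ graded with rank equal to cardinality), I would note that every convex set $x$ of $g$ is an order ideal of some $p_i$: extend a flag of convex sets through $x$ to a chamber of $V_g$, which lies in some $V_{p_i}$.

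For $(1)\Rightarrow(2)$: given $\ell$ extending every $p_i$, let $\cc_\ell$ be the maximal chain $C_0\subsetneq C_1\subsetneq\dots\subsetneq C_n$ of $L_g$ formed by the initial segments of $\ell$. Each $C_j$ is rank-modular: for any convex $x$, pick $i$ with $x\in L_{p_i}$; since $\ell$ extends $p_i$ we also have $C_j\in L_{p_i}$, so $C_j\cup x\in L_{p_i}$ is convex, and because $L_g$ is graded with $\rank=|\cdot|$ this is exactly the statement $C_j\vee x=C_j\cup x$. Applying Lemma~\ref{in_meet_dis_modularset_and_dissub_generate_dissub} with $M$ the set of elements of $\cc_\ell$ and $D$ an arbitrary chain of $L_g$ shows that $\cc_\ell$ together with any chain generates a distributive sublattice, i.e. $\cc_\ell$ is a chief chain. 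Hence $L_g$, and so $g$, is supersolvable.

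For $(2)\Rightarrow(1)$: let $\cc$ be a chief chain of $L_g$; by the characterization of chief chains recalled in the text, every element of $\cc$ is rank-modular. Fix $i$. Since $L_{p_i}$ is a genuine distributive sublattice of $L_g$ (meet is $\cap$, and the join of two order ideals of $p_i$ is again one, hence convex, hence their join in $L_g$), Lemma~\ref{in_meet_dis_modularset_and_dissub_generate_dissub} applied with $M$ the elements of $\cc$ and $D=L_{p_i}$ shows that the sublattice $L'$ generated by $\cc$ and $L_{p_i}$ is distributive. As $L'\supseteq L_{p_i}$ it contains $\hat{0},\hat{1}$ and has height $n$; by maximality of the convex cone $V_{p_i}$ (via the correspondence above) we must have $L'=L_{p_i}$, so $\cc\subseteq L_{p_i}$. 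Therefore each element of $\cc$ is an order ideal of $p_i$, which forces the linear order $\ell_{\cc}$ determined by $\cc$ to extend $p_i$. Since $i$ was arbitrary, $\ell_{\cc}$ extends all the $p_i$, so by the first reduction $\bigcap_{i=1}^k V_{p_i}$ contains the chamber $\ell_{\cc}$, giving (1).

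The main obstacle is the $(2)\Rightarrow(1)$ direction, and specifically the step forcing the chief chain $\cc$ to lie inside \emph{each} $L_{p_i}$; this is where Lemma~\ref{in_meet_dis_modularset_and_dissub_generate_dissub} is indispensable, since it works only because $L_{p_i}$ is a distributive sublattice (not merely a subposet), so that $\langle\cc\cup L_{p_i}\rangle$ is distributive of full height and collapses onto $L_{p_i}$ by maximality. The other step needing care is the structural correspondence between maximal convex cones in $V_g$ and maximal height-$n$ distributive sublattices of $L_g$, in particular the direction that any height-$n$ distributive sublattice of $L_g$ is of the form $L_q$ with $V_q\subseteq V_g$; this is just the Birkhoff correspondence carried out inside $L_g$, but it has to be stated precisely for the maximality argument to bite.
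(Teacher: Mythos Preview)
Your argument is correct and follows essentially the same route as the paper's proof: both directions hinge on Lemma~\ref{in_meet_dis_modularset_and_dissub_generate_dissub}, with $(1)\Rightarrow(2)$ established by showing the initial segments of a common linear extension are rank-modular (since each pair $C_j,x$ sits in a common $L_{p_i}$), and $(2)\Rightarrow(1)$ by enlarging each $L_{p_i}$ with the chief chain to a distributive sublattice and invoking maximality to force $\cc\subseteq L_{p_i}$. Your write-up is in fact more explicit than the paper's on two points the paper leaves implicit: the dictionary between maximal convex cones in $V_g$ and maximal height-$n$ distributive sublattices of $L_g$ (including the Birkhoff step that $L'$, having join $=\cup$ by the Lemma, is $L_q$ for some partial order $q$), and the fact that every convex set lies in some $L_{p_i}$ via the flag property. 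The only redundancy is that in $(1)\Rightarrow(2)$ you could stop at rank-modularity and cite the stated equivalence with chief chains, rather than re-invoking the Lemma; but doing so makes your argument self-contained.
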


For definitions and equivalent conditions of convexity, see \cite[Section 2.1.4]{MS}. Specifically, Theorem~\ref{supersolvable_equivalent_condition} states that the chief chains in the lattice of a supersolvable convex geometry $L_g$ correspond to chambers in the intersection of maximal convex cones in $V_g$. In the lattice description, the intersection of all rank~$|I|$ distributive sublattices in $L_g$ contains at least one chain of length~$|I|$.

\begin{proof} $``\Rightarrow"$: Let $\ell$ be any linear extension of $p_0$. This linear order corresponds to a maximal chain, namely $\cc_\ell$ in $L_g$. We claim that every element in $\cc_\ell$ is rank-modular.

Fix any $m\in \cc_\ell$. For any $x\in L_g$, then by (1), there exists a maximal partial order $p$ such that $L_{p} \subseteq L_g$ and $L_{p}$ contains $m$ and $x$. Then $m \cup x$ and $m\cap x$ are both in $L_{p}$ and hence in $L_g$. Since $L_g$ is a lattice of convex sets, $m\wedge x = m\cap x$. Since $m\vee x$ is the smallest convex set containing $m$ and $x$ and $m\cup x$ is convex, $m\vee x = m\cup x$. Hence $\rank(m\vee x) + \rank(m \wedge x) = |m\cup x| + |m\cap x| = |m| + |x| = \rank(m) + \rank(x)$. Hence $g$ is supersolvable.

$``\Leftarrow"$: Suppose $g$ is supersolvable. Let $\cc$ be a chief-chain. Suppose there is a partial order $p$ such that $L_p\subseteq L_g$ and $\cc\nsubseteq L_p$. Consider the sub-lattice in $L_g$ generated by elements in $L_p$ and $\cc$, call it $L'$. That is, $L'$ consists of elements obtained from taking the join and meet of elements in $L_p$ and $\cc$  recursively. By Lemma \ref{in_meet_dis_modularset_and_dissub_generate_dissub}, we have $L'$ is distributive. Since $L_p\subsetneq L'$, we have every maximum partial order in $g$ must contain elements in $\cc$. Hence $g$ satisfies (1). \end{proof}

An immediate consequence of this characterization is that the collection of all convex geometries arising from partial orders, as discussed in (\ref{partial_order_to_convex_geometries}), is supersolvable.

\begin{example}\label{supersolvable_convex_geometries_examples} Consider the following convex geometry determined by its convex sets. We omit the set brackets. $$g_1 = \{\emptyset,x,y,z,w,xy,xz,xw,xyz,xyw,xzw\}.$$ 
In Figure \ref{supersolvable_example}, $V_{g_1}$ corresponds is the complex in orange. The intersection of maximal convex subcomplex in $V_{g_1}$ is colored in green. Since the intersection contains chambers, $g_1$ is supersolvable. 

\begin{figure}[h!]
\begin{center}
  \includegraphics[width=0.8\textwidth]{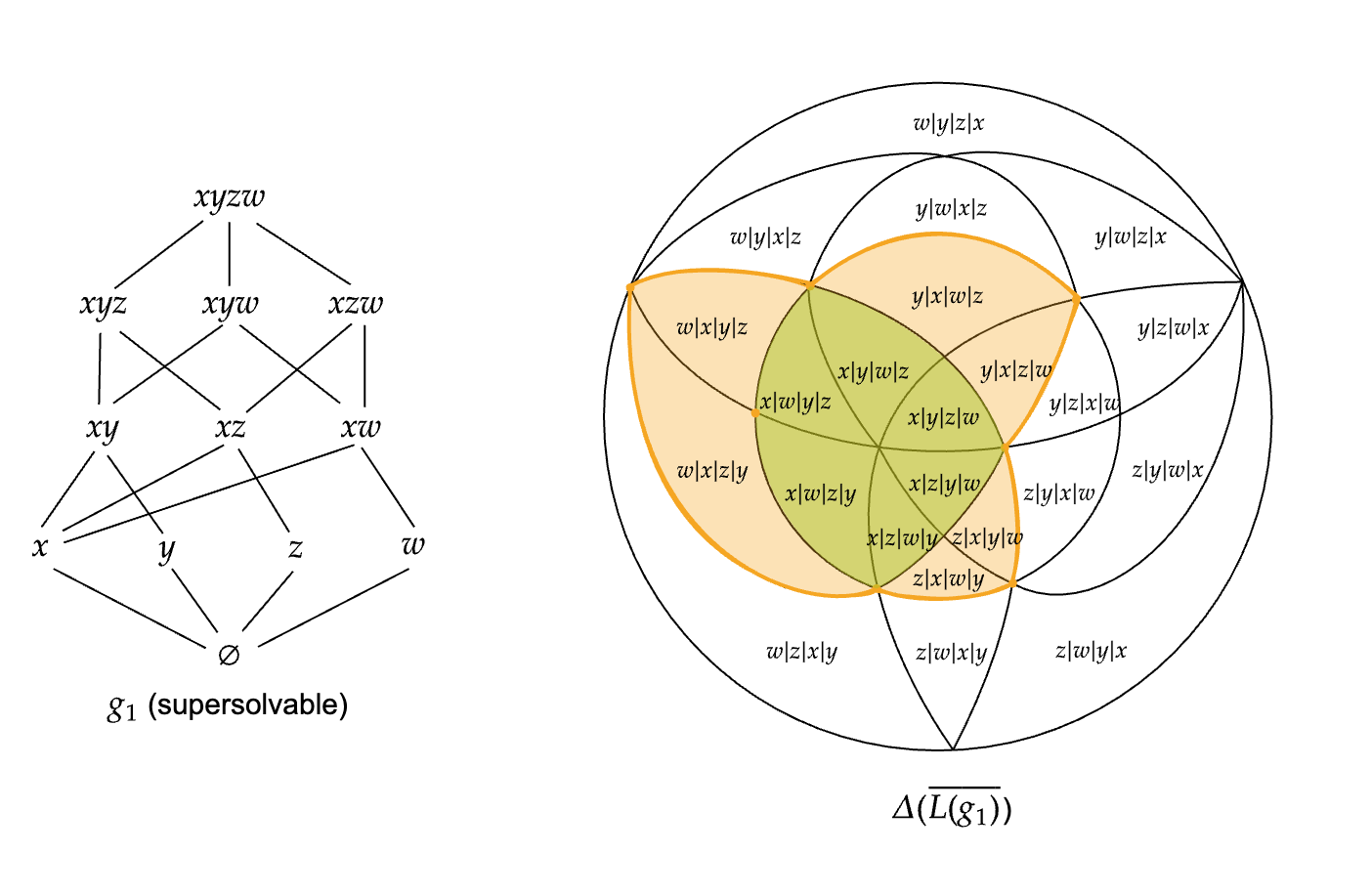}  
\end{center}
\caption{An example of supersolvable convex geometries}\label{supersolvable_example}
\end{figure}
\end{example}

\begin{example}
Consider the following convex geometry determined by its convex sets. We omit the set brackets.
$$g_2 = \{\emptyset,x,y,z,w,xy,yz,zw,xyz,yzw,xyzw\}.$$

In Figure \ref{supersolvable_non_example}, $V_{g_2}$ is the complex in purple which contains the outside chamber $w|z|y|x$. The intersection of maximal convex subcomplex in $V_{g_2}$ is the face colored in blue (a single line segment), which does not contain a chamber. Hence $g_2$ is not supersolvable.

\begin{figure}[h!]
\begin{center}
  \includegraphics[width=0.8\textwidth]{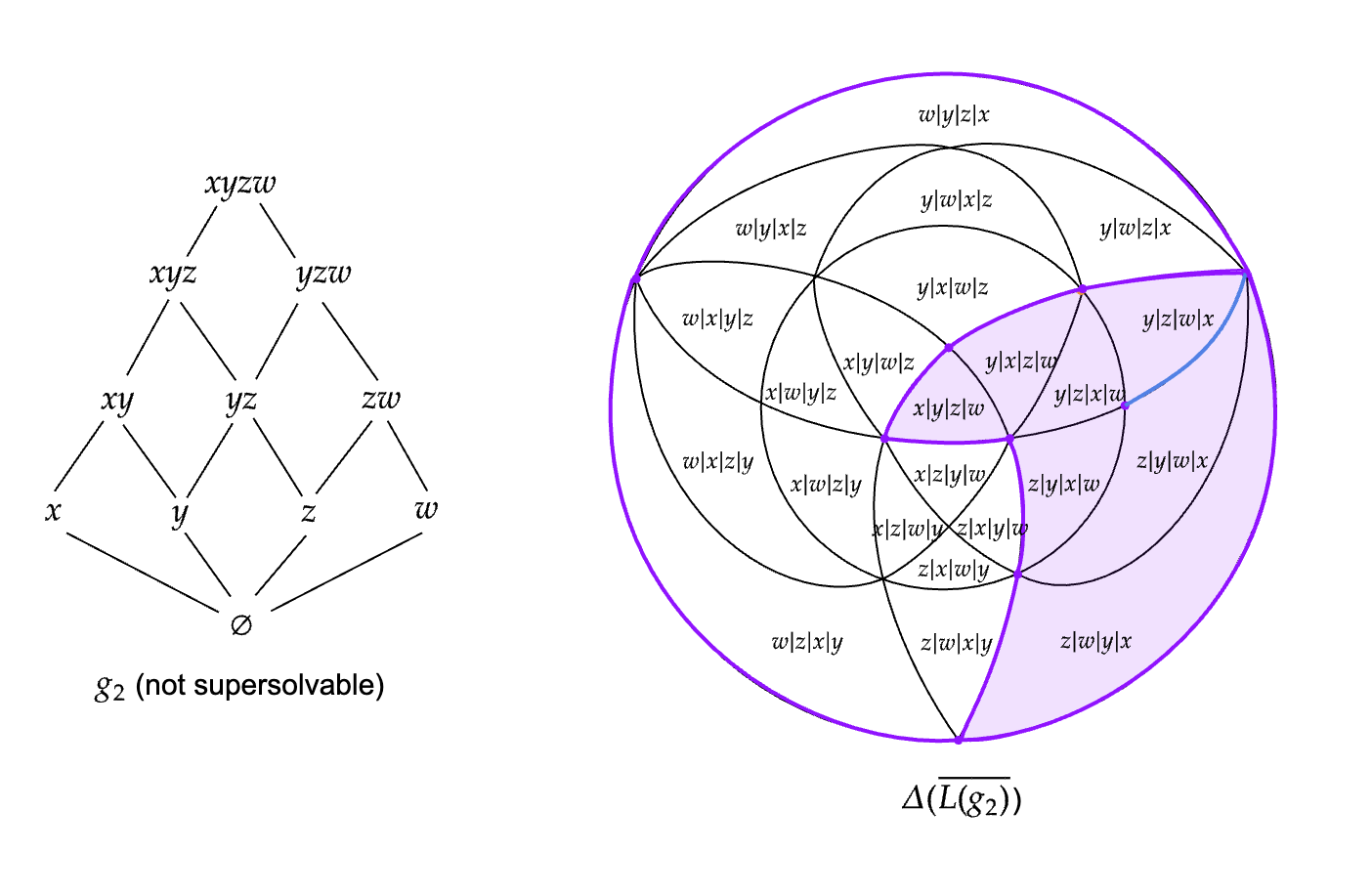}  
\end{center}
\caption{A non-example of supersolvable convex geometries}\label{supersolvable_non_example}
\end{figure}
\end{example}

\newpage
\subsection{$ab$-index associated with supersolvable convex geometries}
Let $g$ be a supersolvable convex geometry, and let $p_0$ be the partial order with $V_{p_0}\subseteq V_p$ for all maximal partial orders $p$ in $g$ according to (1) of Theorem \ref{supersolvable_equivalent_condition}. Let $\overline{p_0}$ be the partial order obtained by reversing relations in $p_0$. Fix any linear order $l_0$, $l_0'$ satisfying $\overline{l_0} \in V_{{p_0}}$, $l'_0 \in V_{p_0}$. We have a geometric description of the $ab$-index associated with $\zeta$, $\eta$ on the Hopf monoid of convex geometries as follows.

\begin{theorem}\label{ab_character_supersolvable}
Let $S\subseteq [n-1]$.
\begin{enumerate}

    \item $[m(a,b)_S]\Psi^{\zeta}_g = \# \left \{\ell \in V_g \mid \Descent(\left ( \begin{array}{c}
     \ell_0  \\
     \ell 
\end{array} \right )) = S\right \}.$
    
\item $[m(a,b)_S]\Psi^{\eta}_g = \# \left \{\ell \in V_g \mid \Descent(\left ( \begin{array}{c}
     \ell'_0  \\
     \ell 
\end{array} \right )) = S\right \}.$

\end{enumerate}
\end{theorem}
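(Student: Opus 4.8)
The plan is to reduce Theorem~\ref{ab_character_supersolvable} to the already-established Theorems~\ref{ab_zeta_p} and its $\eta$-counterpart for partial orders, using the geometric characterization of supersolvability from Theorem~\ref{supersolvable_equivalent_condition}. The key observation is that on a supersolvable convex geometry, the order complex $V_g$ is a union of the maximal convex cones $V_{p_1},\dots,V_{p_k}$, and by Theorem~\ref{supersolvable_equivalent_condition} there is a partial order $p_0$ with $V_{p_0}=\bigcap_i V_{p_i}$, so $\overline{l_0}\in V_{p_0}\subseteq V_{p_i}$ for every $i$. This is exactly the hypothesis needed to run the argument of Theorem~\ref{ab_zeta_p} locally on each piece $V_{p_i}$.

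First I would recall, as in the proof of Theorem~\ref{ab_zeta_p}, that for each composition $\alpha\vDash |I|$ the nonzero part of $f^\zeta_\alpha(g)$ is indexed by $\mathrm{S}_\alpha=\{F\in\interior(V_g)\mid\type(F)=\alpha\}$, and I would set $L_\alpha=\{F l_0\mid F\in\mathrm{S}_\alpha\}$. Since $F l_0$ has type $\alpha$ and agrees with $l_0$ on each block, the map $F\mapsto F l_0$ is injective, so $|L_\alpha|=f^\zeta_\alpha(g)$. The core claim to prove is
$$L_\alpha=\left\{\ell\in V_g\;\middle|\;\Descent\!\left(\begin{array}{c}\ell_0\\ \ell\end{array}\right)\subseteq S(\alpha)\right\}.$$
The inclusion $\subseteq$ is routine: any $F l_0$ lies in $\interior(V_g)\subseteq V_g$ and its descents against $\ell_0$ occur only at positions in $S(\alpha)$. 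For $\supseteq$, given such an $\ell$ take $F$ to be the composition of type $\alpha$ of which $\ell$ is a linear extension; then $F\in V_g$ and $F l_0=\ell$. To promote $F\in V_g$ to $F\in\interior(V_g)$, I would argue: $\ell$ lies in some maximal chamber-cone, hence in some $V_{p_i}$, so $F\in V_{p_i}$; since $\overline{l_0}\in V_{p_0}\subseteq V_{p_i}$ and $V_{p_i}$ is convex (a top-cone), $F\overline{l_0}\in V_{p_i}$, and convexity of $V_{p_i}$ then forces $\starr(F)\subseteq V_{p_i}\subseteq V_g$, i.e.\ $F\in\interior(V_g)$. With the claim in hand, the inclusion–exclusion identity
$$\#\left\{\ell\in V_g\;\middle|\;\Descent\!\left(\begin{array}{c}\ell_0\\ \ell\end{array}\right)=S(\alpha)\right\}=\sum_{\alpha'\leq\alpha}(-1)^{l(\alpha')-l(\alpha)}f^\zeta_{\alpha'}(g)=h^\zeta_\alpha(g)$$
follows exactly as in Theorem~\ref{ab_zeta_p}, giving part (1). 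For part (2) I would run the same argument with $l_0'\in V_{p_0}$ in place of $\overline{l_0}$, using $f^\eta_\alpha(g)=\#\{F\in V_g\mid\type(F)=\alpha\}$ (faces of $V_g$ rather than interior faces), noting that here no promotion to the interior is needed — one only needs $F l_0'\in V_g$, which holds because $\ell$ lies in some $V_{p_i}$, $l_0'\in V_{p_0}\subseteq V_{p_i}$, and $V_{p_i}$ is convex, so $F l_0'\in V_{p_i}\subseteq V_g$.

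The main obstacle is the $\supseteq$ direction of the claim for $\zeta$, specifically the step that upgrades membership $F\in V_g$ to $F\in\interior(V_g)$; this is where supersolvability is genuinely used, since for a general convex geometry $V_g$ need not be convex and the cone-convexity argument breaks down. I would want to be careful that "$\ell$ lies in some maximal chamber-cone of $V_g$" is justified — every chamber of $V_g$ is a face of $V_g$, and by hypothesis the maximal cones of $V_g$ consisting entirely of faces of $V_g$ are exactly the $V_{p_i}$, so any chamber $\ell\in V_g$ lies in at least one $V_{p_i}$. One should also check the compatibility of $\Descent\subseteq S(\alpha)$ with the Tits-product description $F l_0$, and the passage from "$\subseteq S(\alpha)$" counts to "$=S(\alpha)$" counts via the same nested-union/inclusion–exclusion bookkeeping as in Theorem~\ref{ab_zeta_p}; these are direct adaptations and I would present them tersely, referring back to that proof.
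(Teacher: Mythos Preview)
Your proposal is correct and follows essentially the same approach as the paper's proof. The paper's argument is even terser: it simply observes that by Theorem~\ref{supersolvable_equivalent_condition}, for any face $F\in V_g$ the pair $\{F,\ell_0'\}$ (respectively $\{F,\overline{\ell_0}\}$) lies in a common maximal convex cone $V_{p_i}$, so the Tits product lands back in $V_g$, and then the proof of Theorem~\ref{ab_zeta_p} carries over verbatim --- which is exactly the mechanism you spell out in detail.
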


\begin{proof}
By Theorem~\ref{supersolvable_equivalent_condition}, $V_{p_0}$ is contained in every maximal convex cone in $V_g$. Therefore, for $\ell'_0 \in V_{p_0}$ and any face $F \in V_g$, the set $\{\ell'_0, F\}$ is contained in some maximal convex cone in $V_g$. It follows that $F \ell'_0 \in V_g$, and thus we can apply the same argument as in the proof of Theorem~\ref{ab_zeta_p}.
\end{proof}

Note that when we look at $\eta$, the corresponding flag $f$ and $h$ vectors $f^\eta_I(g)$ and $h^\eta_I(g)$ are the flag $f$ and $h$ vectors of the order complex of $L_g$.

\begin{example}
   In Example \ref{supersolvable_convex_geometries_examples}, $V_{p_0}$ is in green. we compute the $ab$-index of $g$ associated with $\zeta$, $\eta$ as follows. 
   \begin{itemize}
       \item[(1)]   $\Psi_{\zeta, g}(a,b) = baa + 3aba + 2bba + 2bab + 3abb + b^3$. Fix the base linear order $\ell_0 = w|z|y|x$, so $\overline{\ell_0} \in V_{p_0}$,  then $baa$ corresponds to the chamber $x|w|z|y$, $3aba$ corresponds to the chambers $w|x|y|z$, $y|x|w|z$, $z|x|w|y$, etc.
       \item[(2)]  $\Psi_{\eta,g}(a,b) = a^3 + 3baa + 2aba+ 2aab + 3bab + bba$. Fix the base linear order  $\ell_0' = x|y|z|w$ so $\ell_0' \in V_{p_0}$, then $a^3$ corresponds to the chamber $x|y|z|w$, $3baa$ corresponds to the chambers $w|x|y|z$, $y|x|z|w$, $z|x|y|w$, etc.
   \end{itemize}

\end{example}

\subsection{$cd$-index associated with supersolvable convex geometries}\label{cd_index}

In this section, we discuss the $cd$-indices associated with the canonical odd characters $\varphi$ and $\varphi'$ in the Hopf monoid of convex geometries $\lcg$. We use arguments from combinatorial Hopf algebras and apply Theorem \ref{ab_character_supersolvable} to show that the coefficients of the $cd$-indices associated with $\varphi$ and $\varphi'$ arise from the peaks of certain two-line permutations determined by chambers in $V_g$.

Let $\hcg$ be the $\Bbbk$ vector space with basis the set of all isomorphism classes of loopless finite convex geometries equipped with the following operations. The product of $g_1$, $g_2$ is the direct sum $$g_1 \cdot g_2 = g_1 \oplus g_2.$$ The unit element is the empty geometry, and the coproduct is $$\Delta(g) = \sum_{S\subseteq I, S\text{ convex}}g|_S \otimes g/_{S}.$$

Then $\hcg$ is a graded connected Hopf algebra with the degree of a convex geometry $g$ determined by the size of its ground set $I$. Consider the characters $\eta$, $\zeta$, $\varphi$, and $\varphi'$ on $\hcg$, with the definitions analogue to those in Section \ref{characters_polynomial_invariants}, so that $\hcg$ together with one of these characters is a combinatorial Hopf algebra. By \cite[Theorem 4.1]{MR2196760}, we have the following commutative diagram.

\begin{equation}\label{commutative_diagram}
    \begin{tikzcd}
	\hcg && \qsym \\
	& \Bbbk
	\arrow["\eta"', from=1-1, to=2-2]
	\arrow["{\Upsilon^{\eta}}", from=1-1, to=1-3]
	\arrow["{\eta_{Q}}", from=1-3, to=2-2]
\end{tikzcd}
\end{equation}

In the above diagram, $\qsym$ is the Hopf algebra of quasisymmetric functions. Recall we have a linear basis for $\qsym$
$$   \BM_\alpha := \sum_{i_1<i_2<...<i_k}x^{\alpha_1}_{i_1} x^{\alpha_2}_{i_2}...x^{\alpha_k}_{i_k}.$$

We have a second linear basis of $\qsym$, namely the \emph{fundamental basis}, obtained as follows.
\begin{equation}
    \BF_\alpha := \sum_{\alpha\leq \beta} \BM_\alpha,
\end{equation}
where $\alpha \leq \beta$ means that $\alpha$ can be obtained by merging blocks of $\beta$.

Then, the maps in the commutative diagrams (\ref{commutative_diagram}) are given as follows.

\[\eta_Q(\BM_\alpha) =\eta_Q(\BF_\alpha)  =\begin{cases}
    1 & \text{if } \alpha = (n) \text{ or } (), \\ 0 & \text{otherwise.}
\end{cases}\]

\[ 
\begin{split}
\Upsilon^\eta(g) & = \sum_{\alpha\vDash n}(\eta_\alpha \circ \Delta_\alpha)(g) \cdot \BM_\alpha \\
 & = \sum_{\alpha =(a_1,...,a_k) \vDash n}(\sum_{\footnotesize{{\begin{array}{c}
     {(S_1,...,S_k)\vDash I}  \\
    |S_i| = a_i 
\end{array}}}}(\eta_{S_1}\otimes ...\otimes \eta_{S_k})\Delta_{S_1,...,S_k}(g))\BM_\alpha \\
&  = \sum_{\alpha\vDash n}(\#\{F\in V_g \mid t(F)  = \alpha\})\cdot \BM_\alpha \\
& =  \sum_{\alpha \vDash n}f_\alpha(L_g)\BM_\alpha.
\end{split}
\]

Let $\varphi_Q = \zeta_Q \circ \eta_Q$, with $\zeta_Q = \overline{\eta}_Q^{-1}$. Since $\Upsilon$ is a morphism of combinatorial Hopf algebras, $\Upsilon$ is a group morphism between $\mathbb{X}(\qsym)$ and $\mathbb{X}(\hcg)$ \cite[pp.5]{MR2196760}. Then we have the following diagram commutes. 

\begin{equation}
        \begin{tikzcd}\label{varphi_commute}
	\hcg && \qsym \\
	& \Bbbk & {}
	\arrow["\varphi"', from=1-1, to=2-2]
	\arrow["{\Upsilon^{\zeta}}", from=1-1, to=1-3]
	\arrow["{\varphi_Q}", from=1-3, to=2-2]
\end{tikzcd}
\end{equation}

By \cite[Theorem 4.1, (4.8)]{MR2196760} we also have the following two commutative diagrams.

\begin{equation}
\begin{tikzcd}
	\hcg && \qsym \\
	& \Bbbk
	\arrow["\varphi"', from=1-1, to=2-2]
	\arrow["{\Upsilon^{\varphi}}", from=1-1, to=1-3]
	\arrow["{\eta_Q}", from=1-3, to=2-2]
\end{tikzcd}
\end{equation}
\begin{equation}\label{F'_def}
    \begin{tikzcd}
	\qsym && \qsym \\
	& \Bbbk
	\arrow["{\varphi_Q}"', from=1-1, to=2-2]
	\arrow["\Theta", from=1-1, to=1-3]
	\arrow["{\eta_Q}", from=1-3, to=2-2]
\end{tikzcd}
\end{equation}

Note $\Theta$ is the map introduced by Stembridge \cite[Theorem 3.1]{Stembridge}. This is defined on the fundamental basis $\BF$ as follows. For $J\subseteq [n-1]$, $\Theta (\BF_J) = \BF'_{\Lambda(J)}$, where $$\Lambda(J) = \{i\in J\mid i \geq 2, \hspace{1mm} i-1 \notin J\}.$$ $$\BF'_{\Lambda(J)} = 2^{|\Lambda(J)|+1}\sum_{D\subseteq [n-1], \Lambda(J)\subseteq D\Delta(D+1)}\BF_D$$ 
form a basis of the ``odd subalgebra of $\qsym$". The second equation is \cite[Proposition 3.5]{Stembridge}, where $\Delta$ denotes the symmetric difference. That is, $D\Delta E = (D-E)\cup (E-D)$. Observe that $\{\Lambda(J) \mid J \subseteq [n-1] \}$ is the set of all \emph{peak sets} on $[n]$, which are subsets of $[n-1]\backslash\{1\}$ containing no consecutive elements. Operations of $\Theta$ on $\BM$ basis were derived in \cite[Theorem 2.4]{Hsiao2007STRUCTUREOT} and \cite[4.9]{MR2196760}.

By the universal property of $\eta_Q$ we have that the top triangle of the following 3-dim diagram commutes.

\begin{equation}
\begin{tikzcd}
	\hcg && \qsym \\
	& \qsym \\
	& \Bbbk
	\arrow["{\Upsilon^{\varphi}}", from=1-1, to=2-2]
	\arrow["{\Upsilon^\zeta}", from=1-1, to=1-3]
	\arrow["\Theta"', from=1-3, to=2-2]
	\arrow["{\eta_Q}", from=2-2, to=3-2]
	\arrow["\varphi"', from=1-1, to=3-2]
	\arrow["{\varphi_Q}", from=1-3, to=3-2]
\end{tikzcd}
\end{equation}

We can argue similarly for the character $\varphi'$. By \cite[Theorem 4.1]{MR2196760} we have \begin{equation}
    \begin{tikzcd}
	\hcg && \qsym \\
	& \Bbbk
	\arrow["\zeta"', from=1-1, to=2-2]
	\arrow["{\Upsilon^\zeta}", from=1-1, to=1-3]
	\arrow["{\eta_Q}", from=1-3, to=2-2]
\end{tikzcd}
\end{equation}

Since $\Upsilon$ is a group morphism between $\mathbb{X}(\qsym)$ and $\mathbb{X}(\hcg)$, $\varphi' = \eta * \zeta = \overline{\zeta}^{-1} *\zeta$, $\varphi_Q =\overline{\eta}^{-1}*\eta_Q$, we have the following commutative diagram. 

\begin{equation}
    \begin{tikzcd}
	\hcg && \qsym \\
	& \Bbbk
	\arrow["{\varphi'}"', from=1-1, to=2-2]
	\arrow["{\Upsilon^\zeta}", from=1-1, to=1-3]
	\arrow["{\varphi_Q}", from=1-3, to=2-2]
\end{tikzcd}
\end{equation}

Then by the same argument as in the $\varphi$ case, we can obtain the following 3-dim commutative diagrams, with the top triangle followed from the other commutative triangles and the universality of $\eta_Q$.

\begin{equation} \label{diagram_varphi'}
    \begin{tikzcd}
	\hcg && \qsym \\
	& \qsym \\
	& \Bbbk
	\arrow["{\Upsilon^{\varphi'}}", from=1-1, to=2-2]
	\arrow["{\Upsilon^\zeta}", from=1-1, to=1-3]
	\arrow["\Theta"', from=1-3, to=2-2]
	\arrow["{\eta_Q}", from=2-2, to=3-2]
	\arrow["{\varphi'}"', from=1-1, to=3-2]
	\arrow["{\varphi_Q}", from=1-3, to=3-2]
\end{tikzcd}
\end{equation}

Now we have the two commutative diagrams of our interests.

\begin{equation}\label{quasi_varphi'}
    \begin{tikzcd}
	\hcg && \qsym \\
	& \Bbbk
	\arrow["{\Upsilon^{\varphi'}}", from=1-1, to=2-2]
	\arrow["{\Upsilon^\zeta}", from=1-1, to=1-3]
	\arrow["\Theta"', from=1-3, to=2-2]
\end{tikzcd}
\end{equation}

\begin{equation}\label{quasi_varphi}
    \begin{tikzcd}
	\hcg && \qsym \\
	& \Bbbk
	\arrow["{\Upsilon^{\varphi}}", from=1-1, to=2-2]
	\arrow["{\Upsilon^\eta}", from=1-1, to=1-3]
	\arrow["\Theta"', from=1-3, to=2-2]
\end{tikzcd}
\end{equation}

Let $m(c,d)_S$ denote the degree $n-1$ $cd$-monomial such that the degrees of the initial segments ending in $d$'s are precisely the elements in $S = \{s_1, ..., s_k\} \subseteq [n-1]$. Note by this understanding, we have a bijective correspondence between the set of non-commutative $cd$-monomial of degree $n-1$  (and the corresponding subset of $2^{[n-1]}$ determined by positions of $d$'s) and the set of peak sets of $[n]$, as discussed by Hsiao \cite[section 5.4]{Hsiao2006ASA}: \begin{equation}\label{bijection_cd_peak_set}c^{a_1}dc^{a_2}d...c^{a_k}dc^{a_{k+1}} \leftrightarrow \{\deg(c^{a_1}d), \deg(c^{a_1}dc^{a_2}d), ..., \deg(c^{a_1}d...c^{a_k}d)\}.\end{equation}

Then we have the following description of the $cd$-index associated with $\varphi'.$

\begin{theorem}\label{cd_varphi'_peaks}
Let $\ell_0 \in V_{\overline{p_0}}$, $S\subseteq [n-1]$.
$$[m(c,d)_S]\Phi^{\varphi'}_g = 2^{|S|+1} \cdot \#\left \{\ell \in V_g \mid \Peak(\left ( \begin{array}{c}
     \ell_0  \\
     \ell 
\end{array} \right )) = S\right \}.$$

\end{theorem}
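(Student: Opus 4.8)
The plan is to combine the $ab$-index computation of Theorem~\ref{ab_character_supersolvable}(1) with the commutative diagram~(\ref{quasi_varphi'}), using the fundamental basis $\BF$ of $\qsym$ as the bridge between the two. First I would recall the standard relation between flag $f$- and flag $h$-vectors and the two bases of $\qsym$: since $\Upsilon^\zeta(g) = \sum_{\alpha \vDash n} f^\zeta_\alpha(g)\,\BM_\alpha$ and $\BF_S = \sum_{T \supseteq S}\BM_T$, Möbius inversion (using $f^\zeta_S = \sum_{T \subseteq S} h^\zeta_T$) gives $\Upsilon^\zeta(g) = \sum_{S \subseteq [n-1]} h^\zeta_S\,\BF_S$, where I write $h^\zeta_S$ for the flag $h$-vector entry indexed by the composition with descent set $S$. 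Since $[m(a,b)_S]\Psi^\zeta_g = h^\zeta_S$ by definition of the $ab$-index, Theorem~\ref{ab_character_supersolvable}(1) identifies $h^\zeta_S = \#\{\ell \in V_g \mid \Des(\binom{\ell_0}{\ell}) = S\}$ for the fixed $\ell_0 \in V_{\overline{p_0}}$.

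Next I would apply Stembridge's map $\Theta$. By diagram~(\ref{quasi_varphi'}), $\Upsilon^{\varphi'}(g) = \Theta(\Upsilon^\zeta(g))$, and since $\Theta(\BF_S) = \BF'_{\Lambda(S)}$,
$$\Upsilon^{\varphi'}(g) \;=\; \sum_{S\subseteq[n-1]} h^\zeta_S\,\BF'_{\Lambda(S)} \;=\; \sum_{\Lambda}\Bigl(\sum_{S:\,\Lambda(S)=\Lambda} h^\zeta_S\Bigr)\BF'_\Lambda,$$
the outer sum running over peak sets $\Lambda$ of $[n]$. Because the peak set of any (two-line) permutation $w$ equals $\Lambda(\Des(w))$, the inner coefficient is exactly $\#\{\ell \in V_g \mid \Peak(\binom{\ell_0}{\ell}) = \Lambda\}$, so the $\BF'$-coordinates of $\Upsilon^{\varphi'}(g)$ are precisely the desired peak counts.

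It then remains to translate this $\BF'$-expansion into the $cd$-index. Here I would use that $\Psi^{\varphi'}_g$ is the image of $\Upsilon^{\varphi'}(g)$ under the linear map $\BF_D \mapsto m(a,b)_D$, together with the identity $\sum_{D:\,\Lambda \subseteq D\Delta(D+1)} m(a,b)_D = m(c,d)_\Lambda$ read off through the bijection~(\ref{bijection_cd_peak_set}); combined with $\BF'_\Lambda = 2^{|\Lambda|+1}\sum_{D:\,\Lambda \subseteq D\Delta(D+1)}\BF_D$ this yields $\BF'_\Lambda \mapsto 2^{|\Lambda|+1}\,m(c,d)_\Lambda$. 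Hence $\Psi^{\varphi'}_g = \sum_\Lambda \bigl(\#\{\ell \in V_g \mid \Peak(\binom{\ell_0}{\ell}) = \Lambda\}\bigr)\,2^{|\Lambda|+1}\,m(c,d)_\Lambda$ is already a polynomial in $c$ and $d$ --- consistent with $\varphi'$ being odd --- hence equals $\Phi^{\varphi'}_g$, and extracting the coefficient of $m(c,d)_S$ proves the theorem.

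The step I expect to be the main obstacle is the last one: carefully establishing that passing to the $cd$-index is exactly the substitution $\BF'_\Lambda \mapsto 2^{|\Lambda|+1}m(c,d)_\Lambda$, which amounts to verifying the identity $\sum_{D:\,\Lambda\subseteq D\Delta(D+1)}m(a,b)_D = m(c,d)_\Lambda$ and checking that the conventions for $cd$-monomials, peak sets, and the bijection~(\ref{bijection_cd_peak_set}) are mutually compatible; this is in essence Stembridge's description of the odd (peak) subalgebra of $\qsym$ (cf.\ \cite{Hsiao2006ASA, Hsiao2007STRUCTUREOT}), so I would cite it rather than reprove it. A minor point to address is that the flag $h$-vector appearing in Theorem~\ref{ab_character_supersolvable} (computed via the Hopf monoid) agrees with the fundamental-basis coordinates of $\Upsilon^\zeta$ on $\hcg$, which is immediate from the explicit formula for $\Upsilon$ recorded just above the statement.
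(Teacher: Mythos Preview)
Your proposal is correct and follows essentially the same route as the paper: express $\Upsilon^\zeta(g)$ in the $\BF$-basis to extract $h^\zeta_S$, apply $\Theta$ via diagram~(\ref{quasi_varphi'}) to obtain the $\BF'$-expansion of $\Upsilon^{\varphi'}(g)$ with coefficients $\sum_{J:\Lambda(J)=S}h^\zeta_J$, identify these sums as peak counts using $\Peak=\Lambda\circ\Des$ together with Theorem~\ref{ab_character_supersolvable}(1), and then translate $\BF'_S$ into $2^{|S|+1}m(c,d)_S$. The paper handles your ``main obstacle'' by citing \cite[Proposition 2.2]{MR1982883} for the $cd$-index translation and \cite[Equation (3.4)]{MR2052597} for the peak--descent decomposition, which is exactly the content you propose to invoke from Stembridge and Hsiao.
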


\begin{proof}

Consider the map (\ref{quasi_varphi'}). Recall \[ \begin{split}
   f^\zeta_I(g) & = \sum_{\alpha\vDash n}f^\zeta_\alpha(g)\BM_\alpha = \sum_{\alpha\vDash n} h^\zeta_\alpha(g)\BF_\alpha = \sum_{J\subseteq [n-1]}h^\zeta_J(g)\BF_J. 
\end{split}\]

Since $\varphi'$ is an odd character, by \cite[Proposition 6.5]{MR2196760} applying the map $\Theta$ we have the following equations.
$$f^{\varphi'}_I(g) = \sum_{\alpha\vDash n} f^{\varphi'}_\alpha(g)\BM_\alpha = \sum_{S \text{ peak set}} k^{\varphi'}_S(g)\BF'_S.$$

By definition of $\BF'_S$ and  \cite[proposition 2.2]{MR1982883}, we have  \begin{equation}\label{cd_coefficient_k}
    [m(c,d)_{\Gamma^{-1}(S)}]\Phi^{\varphi'} = {2^{|S|+1}} k^{\varphi'}_S.
\end{equation}

By (\ref{quasi_varphi'}) we have 
\[\begin{split}
    f^{\varphi'}_I(g) = \Theta \Upsilon^\zeta(g)
     = \sum_{J\subseteq[n-1]}h^\zeta_J(g)\BF'_{\Lambda(J)} = \sum_{S \text{ peak set}}(\sum_{J: \Lambda(J) = S}h^\zeta_J(g))\BF'_S.
\end{split}\]

Hence $k^{\varphi'}_S(g) = \sum_{J:\Lambda(J) = S}h^\zeta_J(g)$ for all peak set $S$ and 0 otherwise.

By Theorem \ref{ab_character_supersolvable}, if $g$ is a supersolvable convex geometry, $S\subseteq [|I|]$, then we have $[m(a,b)_S]\Psi_{\zeta,g}(a,b)$ $= h^\zeta_{S}$ is the number of chambers $\ell\in V_g$ such that the two line permutation $\left ( \begin{array}{c}
     \ell_0  \\
      \ell
\end{array} \right )$ has descents on positions of $b$'s, where each $b$ is an element in $S$. The base linear order $\ell_0$ satisfies $\ell_0\in V_{\overline{p_0}}$, i.e., $\overline{\ell_0}$ is a chief chain.

Consider \cite[Equation (3.4)]{MR2052597}, which states that for a peak set $T\subseteq [n-1]$, we have \[ \begin{split}
   \{\sigma\in S_n: \Peak(\sigma) & = T\} = \coprod_{T':\Lambda(T') = T}\{\sigma \in S_n: \Des(\sigma) = T'\}.
\end{split}
\] 

So we have \[ \begin{split}
   V_g \cap \{\sigma\in S_n: \Peak(\sigma) = T\} & = V_g \cap( \coprod_{T:\Lambda(T') = T}\{\sigma \in S_n: \Des(\sigma) = T'\}) \\ & = \coprod_{T':\Lambda(T') = T}\{\sigma \in S_n: \Des(\sigma) = T'\}\cap V_g.
\end{split}
\]

Since $k^{\varphi'}_S(g) = \sum_{J:\Lambda(J) = S}h^\zeta_J(g)$, we have $k^{\varphi'}_S = \# \left\{\ell \in V_g \mid \Peak(\left (\begin{array}{c}
     \ell_0  \\
     \ell 
\end{array} \right ) ) = S\right \}$. By (\ref{cd_coefficient_k}), we obtain the desired equation.\end{proof}

\begin{example}
    In Example \ref{supersolvable_convex_geometries_examples}, we have $\Phi_{\varphi', g}(c,d) = 8c^3 + 8cd + 24dc$. Fix $\ell_0 = w|z|y|x$ so $\overline{\ell_0} = x|y|z|w \in p_0$, then we have 
    \begin{itemize}
        \item[(1)]  $8c^3$ comes from 4 chambers $x|w|z|y$, $x|y|z|w$, $x|z|w|y$, $x|y|w|z$ with no peaks with respect to $\ell_0$. In addition, there is no $d$'s in $8c^3$.
        \item[(2)] $8cd$ comes from 2 chambers $x|w|y|z$, $x|z|y|w$, each of which has one peak on position 3 with respect to $\ell_0$. In addition, there is 1 $d$ in $8cd$.
        \item[(3)] $24dc$ comes from 6 chambers $w|x|y|z$, $w|x|z|y$, $y|x|w|z$, $y|x|z|w$, $z|x|w|y$, $z|x|y|w$, each of which has one peak on position 2 with respect to $\ell_0$. In addition, there is 1 $d$ in $24dc$.
    \end{itemize}
    
\end{example}

By the same arguments (instead we consider $\ell_0 \in V_{p_0}$) applied on (\ref{quasi_varphi}), we have the following description of the coefficients of $cd$-index associated with $\varphi$.

\begin{theorem}\label{cd_varphi_peaks} Let $g$ be a supersolvable convex geometry on ground set $I$. Let $\ell_0 \in V_{p_0}$, $S\subseteq [n-1]$.
$$[m(c,d)_S]\Phi^{\varphi}_{g} = 2^{|S|+1} \cdot \# \left \{\ell \in V_p \mid \Peak(\left ( \begin{array}{c}
     \ell_0  \\
     \ell 
\end{array} \right )) = \Gamma(S)\right \}.$$
\end{theorem}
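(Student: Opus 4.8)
The plan is to transcribe the proof of Theorem~\ref{cd_varphi'_peaks} along the substitution $\zeta\leftrightarrow\eta$ and $\overline{p_0}\leftrightarrow p_0$, replacing the commutative triangle~(\ref{quasi_varphi'}) by~(\ref{quasi_varphi}) and part~(1) of Theorem~\ref{ab_character_supersolvable} by part~(2). Throughout I will write $\ell_0$ for a base linear order with $\ell_0\in V_{p_0}$ (the order called $\ell_0'$ in Theorem~\ref{ab_character_supersolvable}), which exists and is legitimate here precisely because $g$ is supersolvable.

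First I would expand $\Upsilon^\eta(g)$ in the fundamental basis: by the computation of $\Upsilon^\eta$ recorded above, $\Upsilon^\eta(g)=f^\eta_I(g)=\sum_{\alpha\vDash n}f^\eta_\alpha(g)\,\BM_\alpha=\sum_{J\subseteq[n-1]}h^\eta_J(g)\,\BF_J$. Since $\varphi$ is an odd character, \cite[Proposition 6.5]{MR2196760} gives that $f^\varphi_I(g)$ lies in the odd subalgebra and expands as $f^\varphi_I(g)=\sum_{S\text{ peak set}}k^\varphi_S(g)\,\BF'_S$; pushing $\Upsilon^\eta(g)$ through~(\ref{quasi_varphi}) via $\Theta$ and using $\Theta(\BF_J)=\BF'_{\Lambda(J)}$ from~(\ref{F'_def}) yields
\[
f^\varphi_I(g)=\Theta\,\Upsilon^\eta(g)=\sum_{J\subseteq[n-1]}h^\eta_J(g)\,\BF'_{\Lambda(J)}=\sum_{S\text{ peak set}}\Bigl(\sum_{J:\,\Lambda(J)=S}h^\eta_J(g)\Bigr)\BF'_S,
\]
so $k^\varphi_S(g)=\sum_{J:\,\Lambda(J)=S}h^\eta_J(g)$ for each peak set $S$, and $0$ otherwise. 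By \cite[proposition 2.2]{MR1982883} this gives $[m(c,d)_{\Gamma^{-1}(S)}]\Phi^\varphi_g=2^{|S|+1}k^\varphi_S(g)$ as in~(\ref{cd_coefficient_k}), where $\Gamma$ is the bijection of~(\ref{bijection_cd_peak_set}) between degree $n-1$ $cd$-monomials and peak sets of $[n]$; since $\Gamma$ sends a monomial to its set of $d$-positions we have $|\Gamma(S)|=|S|$, and reindexing by $S\mapsto\Gamma(S)$ turns this into $[m(c,d)_S]\Phi^\varphi_g=2^{|S|+1}k^\varphi_{\Gamma(S)}(g)$.

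Next I would identify $k^\varphi_{\Gamma(S)}(g)$ geometrically. By part~(2) of Theorem~\ref{ab_character_supersolvable}, for $\ell_0\in V_{p_0}$ we have $h^\eta_J(g)=[m(a,b)_J]\Psi^\eta_g=\#\{\ell\in V_g\mid\Des(\left(\begin{array}{c}\ell_0\\\ell\end{array}\right))=J\}$. Intersecting with $V_g$ the disjoint decomposition of \cite[Equation (3.4)]{MR2052597},
\[
\{\sigma\in S_n\mid\Peak(\sigma)=T\}=\coprod_{T':\,\Lambda(T')=T}\{\sigma\in S_n\mid\Des(\sigma)=T'\},
\]
gives $\sum_{J:\,\Lambda(J)=T}h^\eta_J(g)=\#\{\ell\in V_g\mid\Peak(\left(\begin{array}{c}\ell_0\\\ell\end{array}\right))=T\}$ for every peak set $T$. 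Taking $T=\Gamma(S)$ and substituting into the formula for the $cd$-coefficient from the previous paragraph produces exactly $[m(c,d)_S]\Phi^\varphi_g=2^{|S|+1}\,\#\{\ell\in V_g\mid\Peak(\left(\begin{array}{c}\ell_0\\\ell\end{array}\right))=\Gamma(S)\}$, as claimed.

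I do not expect a genuine obstacle; the work is bookkeeping rather than invention, since both the diagram $\Theta\circ\Upsilon^\eta=\Upsilon^\varphi$ and the $ab$-index description of $h^\eta$ are already in hand. The points demanding care are: using diagram~(\ref{quasi_varphi}) rather than~(\ref{quasi_varphi'}); invoking the oddness of $\varphi$ so that the $\BF'$-expansion of $f^\varphi_I(g)$ is legitimate; using supersolvability of $g$ so that $F\ell_0\in V_g$ for every face $F\in V_g$ whenever $\ell_0\in V_{p_0}$, which is precisely what licenses part~(2) of Theorem~\ref{ab_character_supersolvable}; and keeping the base order in $V_{p_0}$ rather than $V_{\overline{p_0}}$, in contrast with the $\varphi'$ case. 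The only routine computation is the compatibility check $|\Gamma(S)|=|S|$ and the passage between the $\BF'$-coefficients $k^\varphi_S$ and the $cd$-coefficients.
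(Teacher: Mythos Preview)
Your proposal is correct and follows exactly the approach the paper intends: the paper's own proof of Theorem~\ref{cd_varphi_peaks} consists of the single sentence ``By the same arguments (instead we consider $\ell_0\in V_{p_0}$) applied on~(\ref{quasi_varphi}),'' and you have spelled out precisely those same arguments with the substitutions $\zeta\leftrightarrow\eta$, $\overline{p_0}\leftrightarrow p_0$, diagram~(\ref{quasi_varphi'})$\leftrightarrow$(\ref{quasi_varphi}), and part~(1)$\leftrightarrow$part~(2) of Theorem~\ref{ab_character_supersolvable}. Your bookkeeping around $\Gamma$ and the check $|\Gamma(S)|=|S|$ is a bit more explicit than the paper's, but the route is identical.
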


\section{Supersolvable closure operators}\label{supersolvable_closure_operators}

In this section, we discuss properties of chief chains on supersolvable closure operators, and raise questions about the associated $ab$- and $cd$-indices via the Hopf monoid of closure operators.

\begin{definition}
    Let $\closure$ be a closure operator on ground set $I$. We say $\closure$ is a \emph{supersolvable closure operator} if $L_\closure$, the lattice of closed sets of $\closure$, is supersolvable.
\end{definition}

Let $B(I)$ be the Boolean poset on a finite set $I$. Recall that we denote $V_{B(I)}$ as the Coxeter complex associated with $B(I)$. Let $cl$ be a closure operator on $I$, and let $V_{cl}$ be the subset of faces in $V_{B(I)}$ corresponding to chains in $L_{cl}$. In general, $V_{cl}$ is not a simplicial complex. An example of such a closure operator on $I = \{x, y, z, w\}$ is given by $cl = \{\emptyset, x, y, xyzw\}$, where the operator is represented by its collection of closed sets.

A \emph{preorder}, denoted as $pr$, on $I$ consists of a binary relation which is reflexive and transitive. If the binary relation in $pr$ is anti-symmetric, then it is a partial order. Note $L_{pr}$, the lattice of the lower sets of $pr$, is distributive and is isomorphic to a lattice of order ideals of some partial order. 

Let $c$ and $c'$ be two chains in $B(I)$. Define $pr(c, c')$ to be the smallest preorder on ground set $I$ whose lattice of lower sets contains both $c$ and $c'$. The corresponding lattice of order ideals, denoted $L_{pr(c,c')}$, is obtained by (iterated) intersections and unions of the lower sets in $c$ and $c'$. Denote by $V_{c,c'}$ the cone in $V_{B(I)}$ associated with $pr(c, c')$.

For a chain $c$ in $B(I)$, write $c = c_1 | \dots | c_k$, where each $c_1 \cup \dots \cup c_i$ is an order ideal in $c$. Then $c$ corresponds to a face in $V_{B(I)}$. Recall that $\supp(c)$ denotes the smallest flat in $V_{B(I)}$ containing $c$. The maximal faces in $\supp(c)$ correspond to the chains $c_{i_1} | \dots | c_{i_k}$, where $(i_1, \dots, i_k)$ is a permutation of $(1, \dots, k)$.

\begin{proposition}\label{chief_chain_supersolvable}
    Let $cl$ be a closure operator on $I$ such that $L_{cl}$ is supersolvable. If $\cc$ is a chief chain in $L_{cl}$, then for each maximal chain $m$ in $L_{cl}$, $V_{pr(\cc,m)}\cap \supp(\cc) \subseteq V_{cl}$
\end{proposition}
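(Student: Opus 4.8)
The plan is to mimic the argument of Theorem~\ref{supersolvable_equivalent_condition}, but now in the strictly larger ambient complex $V_{B(I)}$ and with the weaker hypothesis that $L_{cl}$ is only a lattice of closed sets (not necessarily convex sets). First I would fix a chief chain $\cc$ and a maximal chain $m$ in $L_{cl}$, and recall that by the key property of chief chains every element of $\cc$ is rank-modular in $L_{cl}$: for $z \in \cc$ and $x \in L_{cl}$, $\rho(z \wedge x) + \rho(z \vee x) = \rho(z) + \rho(x)$, where $\rho$ counts cardinality. Since $L_{cl}$ is a meet-sublattice of $B(I)$ we have $z \wedge x = z \cap x$, so rank-modularity forces $|z \vee x| = |z \cup x|$ and hence $z \vee x = z \cup x$ for all $x \in L_{cl}$. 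I would also record the analogue of Lemma~\ref{in_meet_dis_modularset_and_dissub_generate_dissub}: the sublattice of $L_{cl}$ generated by $\cc$ together with the chain $m$ is distributive, since $\cc$ is a set of rank-modular elements and $m$ (being a chain) generates a distributive sublattice; this is exactly what makes $pr(\cc,m)$ well-behaved.

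The heart of the argument is to unwind what $V_{pr(\cc,m)} \cap \supp(\cc)$ consists of. A face $F$ in this intersection is a composition of $I$ that, on one hand, lies in $\supp(\cc)$ — so $F$ is obtained by permuting the blocks of $\cc$, i.e., writing $\cc = \cc_1 \mid \cdots \mid \cc_n$ with each $\cc_1 \cup \cdots \cup \cc_i$ an element of the chief chain, $F$ has the form $\cc_{\sigma(1)} \mid \cdots \mid \cc_{\sigma(n)}$ for some permutation $\sigma$ — and, on the other hand, lies in the cone $V_{pr(\cc,m)}$, meaning every initial union $\cc_{\sigma(1)} \cup \cdots \cup \cc_{\sigma(i)}$ is a lower set of the preorder $pr(\cc,m)$, equivalently an element of $L_{pr(\cc,m)}$. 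So it suffices to show: every such partial union that is a lower set of $pr(\cc,m)$ is already a closed set of $cl$, i.e., lies in $L_{cl}$. Then $F$ corresponds to a chain in $L_{cl}$, which is precisely the statement $F \in V_{cl}$.

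To prove that last implication I would argue as in Lemma~\ref{in_meet_dis_modularset_and_dissub_generate_dissub} and the ``$\Leftarrow$'' direction of Theorem~\ref{supersolvable_equivalent_condition}: the lattice $L_{pr(\cc,m)}$ is generated, under $\cap$ and $\cup$, by the lower sets appearing in $\cc$ and in $m$, all of which lie in $L_{cl}$; since $\cc$ consists of rank-modular elements, every join $z \vee x$ with $z \in \cc$ is simply $z \cup x$, and since $L_{cl}$ is a meet-sublattice of $B(I)$ every meet is an intersection — so by induction on the number of generators involved, every element of $L_{pr(\cc,m)}$ obtained from these generators is a closed set of $cl$. (The one subtlety compared to the convex-geometry case is that here I cannot invoke ``$S \vee T$ is the smallest convex set containing $S$ and $T$'' directly; instead I replace each join by a union whenever a chief-chain element is one of the operands, and for joins of two elements of $m$ I use that $m$ is a chain so its blocks are linearly ordered by inclusion. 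The induction must be organized so that any expression for an element of $L_{pr(\cc,m)}$ can be rewritten into a union/intersection expression only in $L_{cl}$.)

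The main obstacle I anticipate is precisely this bookkeeping: in the general closure-operator setting the join in $L_{cl}$ is $cl(\text{union})$, which need not equal the union, so I must be careful that the rewriting ``$z \vee x = z \cup x$ for $z \in \cc$'' can be pushed through nested expressions — i.e., that in any parenthesization $A \vee (B \wedge C) \vee \cdots$ one can always peel off a chief-chain block to convert the outermost join to a union, exactly as in the induction of Lemma~\ref{in_meet_dis_modularset_and_dissub_generate_dissub}. Once that combinatorial induction is set up correctly, the rest is a direct translation of the earlier proofs. I would also double-check the edge cases where $\cc$ and $m$ share blocks or where $pr(\cc,m)$ degenerates, but these do not affect the argument.
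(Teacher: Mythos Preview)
Your proposal has a genuine gap at its very first step: you assert that the rank function $\rho$ on $L_{cl}$ ``counts cardinality'', and from that deduce $z\vee x = z\cup x$ for $z\in\cc$. For a general closure operator this is false. Take the paper's Example~\ref{supersolvable_cl}: $cl = \{\emptyset, x, y, w, xy, xz, xw, yw, xyzw\}$, chief chain $\cc = \emptyset\subset x\subset xy\subset xyzw$, maximal chain $m = \emptyset\subset y\subset yw\subset xyzw$. Here $L_{cl}$ is graded of rank $3$, so $\rho(xyzw)=3\neq 4=|xyzw|$. The element $x\in\cc$ is rank-modular, yet $x\vee yw = xyzw$ in $L_{cl}$ while $x\cup yw = xyw\notin L_{cl}$. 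So your key identity $z\vee x = z\cup x$ fails, and with it the induction you propose: the stronger claim $L_{pr(\cc,m)}\subseteq L_{cl}$ that you aim for is simply false (the element $xyw$ witnesses this).

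The paper does \emph{not} try to prove $L_{pr(\cc,m)}\subseteq L_{cl}$. Instead it exploits the restriction to $\supp(\cc)$: only unions of the blocks $\cc_1,\dots,\cc_k$ matter, and the argument shows inductively on rank that no such union can be produced in $L_{pr(\cc,m)}$ unless it already lies in the distributive sublattice $L_{\cc\vee m}\subseteq L_{cl}$. The key tool is not rank-modularity-as-cardinality but the Boolean structure of intervals in the distributive lattice $L_{\cc\vee m}$: if several covers $B_1,\dots,B_r$ of some $A'_{l-1}$ had their union equal to a block-union $A'_{l-1}\cup\cc'$ \emph{not} in $L_{\cc\vee m}$, a rank count in the Boolean interval above $A'_{l-1}$ gives a contradiction. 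That argument uses only the intrinsic rank of $L_{\cc\vee m}$, never cardinality, and it is precisely this that lets it survive the passage from convex geometries to closure operators.
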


\begin{proof} 

First we know that $\cc$ is a chief chain iff for each chain $c$ in $L_{cl}$, $L_{\cc\vee c}$, the smallest sublattice in $L_\closure$ containing $\cc$ and $c$, is distributive. Note since each chain $c$ is contained in a maximal chain $m$, if the statement is true for each maximal chain, then it holds for every chain.

Note that, since the meet in $L_{\cc\vee m}$ is intersection, every element in $L_{pr(\cc,m)}$ can be obtained by a union of elements in $L_{\cc \vee m}$. Also, if $x_1,..., x_a$ are elements in $L_{\cc \vee m}$ that cover $\emptyset$, then these are precisely elements in $L_{pr(\cc,m)}$ that are of rank 1.

We write $\cc = \cc_1|...|\cc_k$ as segments in a flag of order ideals in $L_{cl}$ (and in $L_{\cc\vee m}$), i.e., $\emptyset = A_0 \subseteq A_1 \subseteq A_2 \subseteq ...\subseteq A_k = I$ with $A_i = \cc_1 \cup...\cup \cc_i$ is closed for $i > 0$. We show that when we have rank $\leq l$ elements in $L_{cl}$, by applying the union operation we cannot generate any chain corresponding to a sub-flag of $\cc_{i_1}\cup...\cup \cc_{i_k}$ which is not included in $L_{\cc \vee m}.$

We need to show that fix any $A'_{l-1} = \cc_{i_1}\cup...\cup\cc_{i_{l-1}}$ with the corresponding sub-flag of $\cc_{i_1}\cup...\cup \cc_{i_k}$ in $L_{\cc\vee m}$, we cannot obtain any $A'_l = A'_{l-1}\cup \cc'$ ($\cc'\in \{\cc_i \mid 1\leq i \leq k\}$) which is not contained in $L_{\cc\vee m}$. Note the only possibility to generate some $A'_{l-1}\cup \cc'$ which is not included in $L_{\cc\vee m}$ is we have some $\cc'$ with $|\cc'| \geq 2$ and $B_1,...,B_r$ with $\rank(B_i) = l$, $A'_{l-1}\subseteq B_i$ (so $B_i$ covers $A'_{l-1}$), and $\bigcup_{i=1}^r B_i = A'_{l-1}\cup \cc'$ is in $L_{pr(\cc,m)}$. By assumptions, we have $\bigvee_{i=1}^r B_i \supsetneqq \bigcup_{i=1}^r B_i$ in $L_{\cc \vee m}$, and $\rank(\bigvee_{i=1}^r B_i) = l-1+r$.

Note $A'_{l-1}$ is a sub-flag of $\cc_{i_1}|...|\cc_{i_k}$. Let $A'_l$ be the $l$'th element and $A'_{l+1}$ be the $l+1$'th element in the corresponding flag. That is, $A'_l = A'_{l-1}\cup\cc_{i_l}, A'_{l+1} = A'_{l-1}\cup\cc_{i_l} \cup \cc_{i_{l+1}}$. If $A'_l \cap \bigvee_{i=1}^r B_i \neq \emptyset$, then in $[A'_{l-1},I]$, $\bigvee_{i=1}^r{B_i}$ is a rank $r$ element but contains $r+1$ rank 1 elements, which is impossible. If $A'_l \cap \bigvee_{i=1}^r B_i = \emptyset$, again, since the interval from $A'_{l-1}$ and the joint of all rank 1 element in $[A'_{l-1}, I]$ is Boolean, we have $\cup_{i=1}^r B_i \backslash A'_{l-1}$ is no longer $\cc'$, contradiction. Hence $\omega(\cc)$ with $\omega \in S_k$ and $\omega(\cc)\in L_{\cc \vee m}$ is exactly the same as those $\omega(\cc)$ in $L_{pr(\cc \vee m)}$.
\end{proof}

\begin{example}\label{supersolvable_cl}
    Consider the closure operator represented by closed sets $cl = \{\emptyset, x,y,w,xy,xz,xw,$ $yw,xyzw\}$. The corresponding subset of faces in $V_{B(I)}$ is colored in purple and blue in figure \ref{supersolvable_closure}. Checking the definition of a chief chain, we have the maximal chains in $cl$ corresponding to purple segments, i.e., $x|y|zw$, $y|x|zw$, $x|w|yz$, $w|x|yz$  are chief chains in $L_{cl}$. They satisfy the geometric condition in proposition \ref{chief_chain_supersolvable}. The remaining three maximal chains are not chief chains since they fail the condition.

    \begin{figure}[h!]
\begin{center}
  \includegraphics[width=0.8\textwidth]{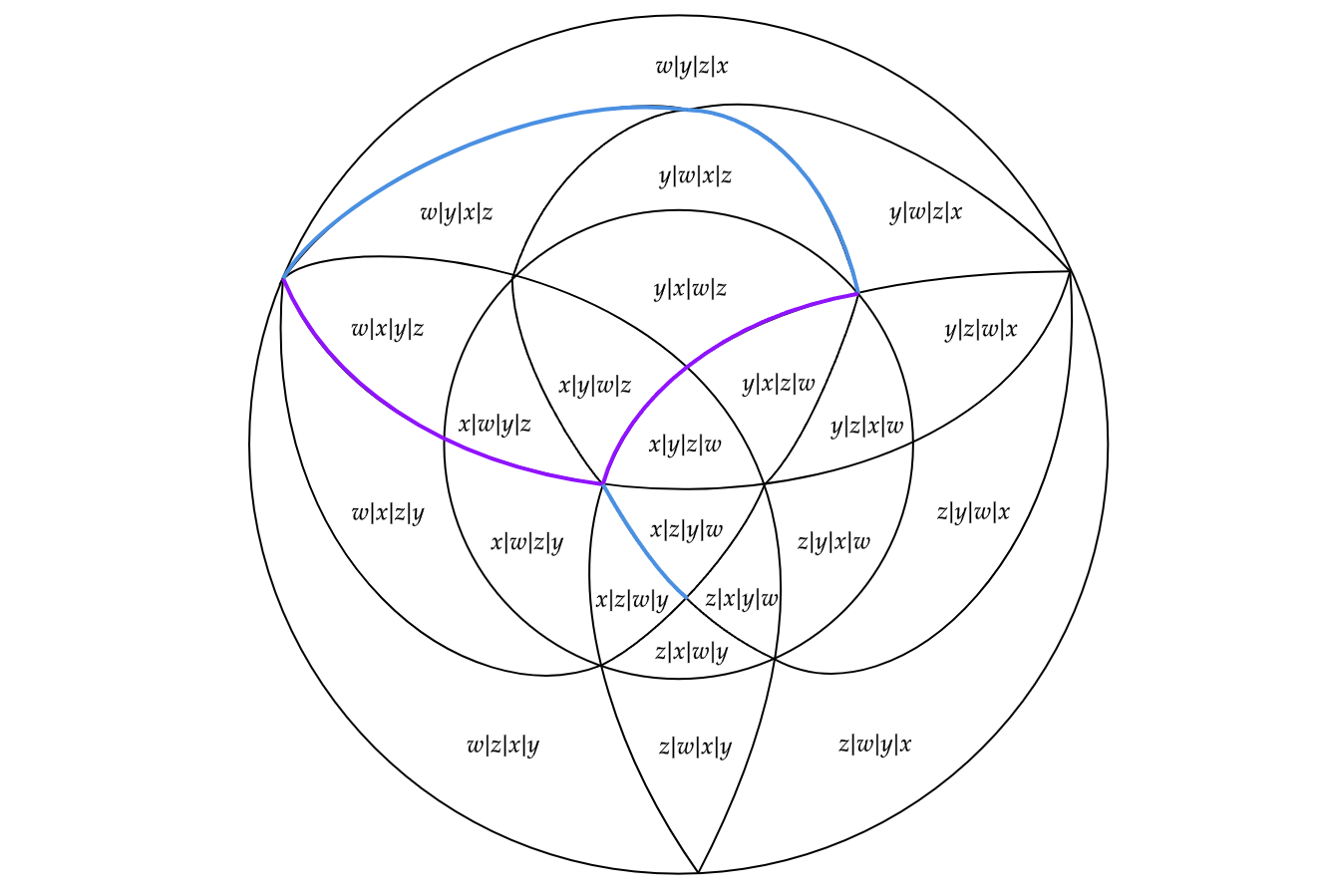}  
\end{center}
\caption{}\label{supersolvable_closure}
\end{figure}
\end{example}

Note the reverse of Proposition \ref{chief_chain_supersolvable}
 is false so it is not an iff statement. For example, consider the closure operator with closed sets $\emptyset, \{a\}, \{b\}, \{c\}, \{a,b\}, \{b,c\},\{a,b,c,d\}$. The lattice of closed sets is supersolvable, but the chamber corresponding to the non-chief chain $\emptyset|a| ab|abcd$ satisfies the geometric condition.

\begin{question}\label{question_classify_chief_chains_geometrically}
    Classify supersolvable closure operators on ground set $I$ on $V_{B(I)}$.
\end{question}

An important classification of supersolvable lattices is that a finite graded lattice of rank $n$ is supersolvable if and only if it is \emph{$S_n$ EL-shellable} \cite[Theorem 1]{McNamara2001ELlabelingsSA}. From this argument we can describe the flag $h$-vector associated with $\eta$ on the Hopf monoid of closure operators, which is also the flag $h$-vector associated with $L_{cl}$, by looking at descents of elements in $S_{\cc} = \{\cc_{i_1}|...|\cc_{i_k}: \{i_1,...,i_k\} = [k]\}$ with respect to the given chief chain $\cc$.

Let $cl$ be a supersolvable closure operator. Let $\cc$ be a chief chain, and write $\cc = (\cc_1,...,\cc_k)$ as segments in a flag of order ideals in $L_{cl}$ (and in $L_{\cc\vee m}$), i.e., $\emptyset = A_0 \subseteq A_1 \subseteq A_2 \subseteq ...\subseteq A_k = I$ with $A_i = \cc_1 \cup...\cup \cc_i$ for $i > 0$. Let $B_{\cc}$ be the Boolean algebra generated by $\cc_i$'s and let $L_{B(\cc)}$ be the corresponding Boolean lattice with rank 1 elements $\cc_i$'s. Note $L_{B(\cc)}$ admits a unique $S_n$ EL-labeling, namely $\mathcal{L}_\cc$, in which the chain $A_0 \subseteq A_1 \subseteq ... \subseteq A_k$ associated with $\cc$ is assigned to $1-2-...-k$.

Fix any chain $m$ in $L_{cl}$, on each step $m_i - m_{i+1}$, we either add some $\cc_i$ or a subset of $c_i$. By rank comparison we cannot add two subsets of $c_i$ in different steps. Hence $L_{\cc\vee m}$ together with the $S_n$ EL-labeling assigning $A_0\subseteq A_1\subseteq ... \subseteq A_k$ to $1-2-...-k$ can be embedding uniquely into $L_{B(\cc)}$ with respect to $\mathcal{L_{\cc}}$.

Hence if we fix the labeling of $\cc$ as $1-2-...-k$, then for each maximal chain $m$ in $L_{cl}$, it is uniquely assigned a labeling permuting ${i_1}-{i_2}-...,-{i_k}$. Hence it corresponds to the chain $\cc_{i_1}-\cc_{i_2}-...-\cc_{i_k}$ in $L_{B(\cc)}$. In $V_{B(I)}$, we may determine the labeling of $V_m$ by comparing the gallery distance (see \cite[section 1.10.3]{MS} for definition) between $V_m$ and $V_\cc$, namely $D(V_m, v_\cc)$ and the gallery distance between $\cc$ and other elements on $\supp(\cc)$. The following proposition follows from that fact that distributive lattices are strongly connected and $L_{\cc\vee m}$ is EL-shellable, admitting an $S_k$ EL-labeling which assigns $\cc$ to $1-2-...-k$ and can be embedded in $\mathcal{L}_{\cc}$.

\begin{proposition}
        Let $\cc' \in L_{B(\cc)}$ with $D(V_\cc,V_\cc') = D(V_\cc, V_m)$ on the same direction, then the $S_n$ labeling of $m$ in $L_{cl}$ is the same as the $S_n$ labeling in $\cc'$ in $L_{B(\cc)}$. 
\end{proposition}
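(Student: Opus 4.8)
The plan is to reduce the statement to the rigidity of the $S_k$ EL-labeling of the distributive lattice $L_{\cc\vee m}$, and then to transport that rigidity, class by class, into a minimal gallery of the flat $\supp(\cc)$. As recorded just before the statement, the chief-chain hypothesis makes $L_{\cc\vee m}$ distributive (Lemma~\ref{in_meet_dis_modularset_and_dissub_generate_dissub}); decreeing that $\cc=(\cc_1,\dots,\cc_k)$ carries the label word $1\,2\cdots k$ determines a \emph{unique} $S_k$ EL-labeling of $L_{\cc\vee m}$, and this labeling embeds into $\mathcal{L}_\cc$ on $L_{B(\cc)}$. Thus the maximal chain $m=(m_0\lessdot m_1\lessdot\cdots\lessdot m_k)$ acquires a label word which is a permutation $\pi_m\in S_k$, and $m$ is carried to the maximal chain $\varnothing\lessdot\cc_{\pi_m(1)}\lessdot\cc_{\pi_m(1)}\cup\cc_{\pi_m(2)}\lessdot\cdots\lessdot I$ of $L_{B(\cc)}$, that is, to the maximal face of $\supp(\cc)$ with set composition $\cc_{\pi_m(1)}|\cdots|\cc_{\pi_m(k)}$; by definition this is the $S_k$ label of $m$. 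So everything reduces to recovering $\pi_m$ from $D(V_\cc,V_m)$ together with a direction.

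\textbf{Transporting the label into a gallery.} Since distributive lattices are strongly connected, I can pass from the chain $\cc$ to $m$ inside $L_{\cc\vee m}$ by a sequence of polygon moves across rank-two intervals; each move transposes two adjacent letters of the running label word, and the least number of moves needed is the number of inversions $\mathrm{inv}(\pi_m)$. Tracking this sequence inside $L_{B(\cc)}$ turns it into a minimal gallery in $\supp(\cc)$ from $V_\cc$ to the face $\cc_{\pi_m(1)}|\cdots|\cc_{\pi_m(k)}$, crossing exactly the walls of $\supp(\cc)$ indexed by the inverted class pairs of $\pi_m$. The same polygon moves, read back in $V_{B(I)}$, describe a path from $V_\cc$ to $V_m$; here I would invoke distributivity of $L_{\cc\vee m}$ — the rank-counting argument from the proof of Proposition~\ref{chief_chain_supersolvable} — to show that every class $\cc_i$ enters the flag $m$ as a single run, so that no pair of classes interleaves along $m$, the order in which the classes enter $m$ is the well-defined total order $\cc_{\pi_m(1)},\dots,\cc_{\pi_m(k)}$, and the orthogonal projection of $V_m$ onto the flat $\supp(\cc)$ is precisely the chamber $\cc_{\pi_m(1)}|\cdots|\cc_{\pi_m(k)}$. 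Taking $D(V_\cc,V_m)$ to mean the gallery distance in $\supp(\cc)$ between $V_\cc$ and this projection — equivalently, the number of class pairs $\{\cc_s,\cc_t\}$ placed in opposite order by $V_\cc$ and by $V_m$ — we get $D(V_\cc,V_m)=\mathrm{inv}(\pi_m)=D(V_\cc,\,\cc_{\pi_m(1)}|\cdots|\cc_{\pi_m(k)})$, with the two faces of $\supp(\cc)$ on the same side of each wall that separates them from $V_\cc$.

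\textbf{Conclusion.} A maximal face (chamber) of the Coxeter arrangement $\supp(\cc)$ is pinned down by its gallery distance from $V_\cc$ together with the set of walls separating it from $V_\cc$ (see \cite[\S1.10.3]{MS}). Hence any $\cc'\in L_{B(\cc)}$ with $D(V_\cc,V_{\cc'})=D(V_\cc,V_m)$ on the same direction must coincide with $\cc_{\pi_m(1)}|\cdots|\cc_{\pi_m(k)}$, and so its $S_k$ label is $\pi_m$, that is, the $S_k$ label of $m$.

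\textbf{Main obstacle.} The delicate step is the interleaving lemma in the second paragraph. A cover of $m$ in $L_{cl}$ can absorb ``closure junk'' that is invisible to the EL-embedding — for instance $\cc_3=\{z,w\}$ is split across two blocks of $m$ in Example~\ref{supersolvable_cl} — and one must show this junk never reverses the relative order of two classes along $m$, so that the wall-crossings of the path from $V_\cc$ to $V_m$ match the transpositions of the walk from $\cc$ to $m$ in $L_{cl}$; this is exactly where distributivity of $L_{\cc\vee m}$ itself (not merely of $L_{pr(\cc,m)}$), together with strong connectedness, is used. The remaining work is bookkeeping: pinning down, as above, that $D(\,\cdot\,,\,\cdot\,)$ between faces of different support is read off after projecting into $\supp(\cc)$, and that ``on the same direction'' means equality of the induced separating-wall sets.
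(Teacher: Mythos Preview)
Your proposal is correct and follows essentially the same approach as the paper's. The paper's own justification is only a one-line sketch --- ``follows from the fact that distributive lattices are strongly connected and $L_{\cc\vee m}$ is EL-shellable, admitting an $S_k$ EL-labeling which assigns $\cc$ to $1\text{-}2\text{-}\cdots\text{-}k$ and can be embedded in $\mathcal{L}_{\cc}$'' --- and your argument unpacks precisely these three ingredients: the polygon-move walk (strong connectedness), the uniqueness of the $S_k$ EL-labeling, and the embedding into $L_{B(\cc)}$, together with the rank-counting observation from the paragraph preceding the proposition to control how the classes $\cc_i$ enter $m$.
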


\begin{example}
    In Example \ref{supersolvable_cl}, fix the chief chain $\cc = y|x|zw$, so the associated $S_3$ chain in $\mathcal{L}_\cc$ is $1-2-3$. Then the associated $S_3$ chain for both $y|w|xz$ and $y|wz|x$ is $1-3-2$.
\end{example}

For each $m\in L_{cl}$, let $\cc(m)$ denote the permutation of $\cc$ corresponding to the maximal chain in $L_{B(\cc)}$ such that $D(\cc,\cc') = D(\cc,m)$. Then we can describe the flag $f$-vector $f^\eta_{\alpha}$ as the number of chains $m$ with possible descents of $\left (\begin{array}{c}
     \cc  \\
     \cc(m) 
\end{array}\right)$ appearing on position $\alpha_1, \alpha_1+\alpha_2,...,\alpha_1+...+\alpha_{k-1}$. By similar argument as in Section \ref{ab_discussion}, if we know a chief chain $\cc$ from a supersolvable closure operator $cl$, we can describe the coefficients of flag $h$-vector associated with $\eta$ on the $\lc$ geometrically as follows.

\begin{proposition}
    $[m(a,b)_S]\Psi^\eta_{cl}(a,b)$ is the number of maximal chains $m$ such that the permutation $\left ( \begin{array}{c}
     \cc  \\
      \cc(m)
\end{array} \right )$ admits descents on positions of $b$'s for $b\in S$.
\end{proposition}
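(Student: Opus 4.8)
The plan is to read off the statement from the description of the flag $f$-vector obtained in the discussion just above, and then to run the same inclusion--exclusion that appears in the proof of Theorem~\ref{ab_zeta_p}. First I would record the input. Once a chief chain $\cc = (\cc_1,\dots,\cc_k)$ is fixed, $L_{cl}$ is EL-shellable in the sense of \cite{McNamara2001ELlabelingsSA} via the labeling $\mathcal{L}_\cc$ that sends $\cc$ to the word $1\text{-}2\text{-}\cdots\text{-}k$, each maximal chain $m$ of $L_{cl}$ is assigned a permutation $\cc(m)$ of the blocks of $\cc$ by matching gallery distances in $V_{B(I)}$, and the flag $f$-vector associated with $\eta$ is
$$
f^\eta_\alpha(cl) \;=\; \#\left\{\, m \ \text{maximal chain of}\ L_{cl} \ :\ \Des\!\left(\begin{array}{c}\cc\\ \cc(m)\end{array}\right) \subseteq S(\alpha) \,\right\},
$$
where $S(\alpha) = \{\alpha_1,\ \alpha_1+\alpha_2,\ \dots,\ \alpha_1+\cdots+\alpha_{k-1}\}$ and the two-line permutation $\left(\begin{array}{c}\cc\\ \cc(m)\end{array}\right)$ is formed by refining $\cc$ and $\cc(m)$ inside each block compatibly with one fixed linear extension of $\cc$, so that a descent can occur only at a position separating two consecutive blocks. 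Here $n = |I|$.

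Given this, the computation is routine. For $T \subseteq [n-1]$ set $g(T) := \#\{\, m : \Des\!\left(\begin{array}{c}\cc\\ \cc(m)\end{array}\right) = T \,\}$; partitioning the maximal chains counted by $f^\eta_\alpha(cl)$ according to their exact descent set gives $f^\eta_\alpha(cl) = \sum_{T \subseteq S(\alpha)} g(T)$. The flag $h$-vector is recovered by $h^\eta_\alpha = \sum_{\beta \leq \alpha}(-1)^{l(\beta)-l(\alpha)} f^\eta_\beta$, and since $\beta \leq \alpha$ (that is, $\beta$ coarsens $\alpha$) corresponds precisely to $S(\beta) \subseteq S(\alpha)$, this is ordinary M\"obius inversion over the Boolean lattice of subsets of $S(\alpha)$; hence $h^\eta_\alpha = g(S(\alpha))$. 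As $\Psi^\eta_{cl}(a,b) = \sum_\alpha h^\eta_\alpha\, m(a,b)_\alpha$ and $\alpha \mapsto S(\alpha)$ is a bijection from compositions of $n$ onto subsets of $[n-1]$, we conclude that $[m(a,b)_S]\Psi^\eta_{cl}(a,b) = h^\eta_\alpha = g(S)$, i.e.\ it counts exactly the maximal chains $m$ whose two-line permutation $\left(\begin{array}{c}\cc\\ \cc(m)\end{array}\right)$ has its descents on the positions of the $b$'s in $m(a,b)_S$.

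The real content, and the main obstacle, is the flag $f$-vector description that everything above is built on, rather than the inclusion--exclusion (which is literally the argument of Theorem~\ref{ab_zeta_p}). One has to check that the gallery-distance recipe $m \mapsto \cc(m)$ genuinely records the $\mathcal{L}_\cc$-label word of $m$, and that the descent positions it produces are exactly the between-block positions of $\cc(m)$; this is where the hypothesis that $cl$ is supersolvable enters, through \cite{McNamara2001ELlabelingsSA} and through the compatible embedding of each $L_{\cc\vee m}$ into $L_{B(\cc)}$ with respect to $\mathcal{L}_\cc$ set up earlier in the section. The delicate point there is the bookkeeping between the rank-$k$ EL-shelling of $L_{cl}$ and the $n$-level (that is, $V_{B(I)}$-level) indexing of the Hopf-monoid flag vectors, which is precisely what forces the convention of refining within each block $\cc_i$. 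Once that description is in hand, the proposition follows with no further computation.
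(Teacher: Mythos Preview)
Your proposal is correct and matches the paper's own approach exactly: the paper does not give a standalone proof of this proposition but simply says ``By similar argument as in Section~\ref{ab_discussion}'' after recording the flag $f$-vector description, and you have faithfully spelled out that argument (the inclusion--exclusion of Theorem~\ref{ab_zeta_p}). Your closing paragraph also correctly isolates where the actual work lies, namely in the preceding discussion establishing the $f^\eta_\alpha$ description via the EL-labeling and the embedding of $L_{\cc\vee m}$ into $L_{B(\cc)}$.
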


These results are weaker than those in the previous sections, as we do not yet have a geometric classification of chief chains for general supersolvable closure operators on $I$. Hence, solving Question~\ref{question_classify_chief_chains_geometrically} is a natural next step. Moreover, since $\zeta$ is the convolution inverse of $\overline{\eta}$, this suggests the conjecture that the $ab$-index associated with $\zeta$ admits a geometric interpretation dual to that of $\eta$. If this is the case, the arguments in Section~\ref{cd_index} can be extended to the context of supersolvable closure operators, providing a geometric description of the $cd$-index associated with $\varphi$ and $\varphi'$.

\section{Acknowledgements}
The author would like to thank Marcelo Aguiar for introducing the topics, providing guidance, and offering helpful comments on this work. The author is also grateful to Margaret Bayer, Yibo Gao, Jeremy Martin, and Yirong Yang for their valuable suggestions that helped improve the presentation of this manuscript.

\printbibliography

\end{document}